\numberwithin{equation}{section}
\numberwithin{equation}{subsection}
\theoremstyle{plain}
\newtheorem{theorem}[equation]{Theorem}
\newtheorem{lemma}[equation]{Lemma}
\newtheorem{question}[equation]{Question}
\newtheorem{thm}[equation]{Theorem}
\newtheorem{cor}[equation]{Corollary}
\newtheorem{lem}[equation]{Lemma}
\newtheorem{prop}[equation]{Proposition}
\theoremstyle{definition}
\newtheorem{example}[equation]{Example}
\newtheorem{remark}[equation]{Remark}
\newtheorem{defn}[equation]{Definition}
\newtheorem{nota}[equation]{Notation}
\newtheorem{ex}[equation]{Example}
\newtheorem{rem}[equation]{Remark}
\numberwithin{equation}{section}
\numberwithin{equation}{subsection}
\newcommand{ \lk }{ \mbox{lk} }
\newcommand{ \imm }{ \mbox{Imm} }
\def\C{\mathbb C}
\def\R{\mathbb R}
\def\Z{\mathbb Z}
\newcommand{\labelpar}{\label}
\title{Immersions associated with holomorphic germs}
\author{Andr\'as N\'emethi}
\address{A. R\'enyi Institute of Mathematics, 1053 Budapest, Re\'altanoda u. 13-15, Hungary.}
\email{nemethi.andras@renyi.mta.hu}
\author{Gerg\H{o} Pint\'er}
\email{pinter.gergo@renyi.mta.hu}
\thanks{NA is partially supported by OTKA Grant 100796, PG is supported by `Lend\"ulet' and ERC program `LTDBud' at R\'enyi
Institute.}
\keywords{hypersurface singularities,
links of singularities, Smale invariant, regular homotopy type, singular Seifert surface, cross caps}
\subjclass[2010]{Primary. 32S05, 32S25, 32S50, 57M27,
Secondary. 14Bxx,  32Sxx, 57R57, 55N35}
\date{}
\begin{document}

\maketitle


\pagestyle{myheadings} \markboth{{\normalsize
A. N\'emethi and G. Pint\'er}}{ {\normalsize About the immersions associated with holomorphic germs}}

\begin{abstract}
 A holomorphic germ $ \Phi: ( \C^2, 0) \to ( \C^3, 0) $,  singular only at the origin,
  induces at the links level an immersion of $ S^3 $ into $ S^5 $.
The regular homotopy type of immersions $S^3\looparrowright S^5$ are determined by their Smale invariant,
defined up to a sign ambiguity.
In this paper we fix a sign of the  Smale invariant and we show that
for immersions induced by holomorphic gems the
sign--refined Smale invariant $\Omega$ is the negative of the
number of cross caps appearing in a generic perturbation of $ \Phi $.
Using the algebraic method we calculate $\Omega$
 for some families of singularities, among others the A-D-E quotient singularities. As a corollary,
  we obtain that the regular homotopy classes which admit holomorphic
  representatives are exactly    those, which have non-positive sign--refined Smale invariant.
  This answers a question of Mumford regarding exactly this correspondence.
We also determine the sign ambiguity in the
topological formulae of Hughes--Melvin and Ekholm--Sz\H{u}cs connecting the Smale invariant
with (singular) Seifert surfaces. In the case of holomorphic realizations of Seifert surfaces,
we also determine  their involved invariants  in terms of holomorhic geometry.
\end{abstract}

\section{Introduction}\labelpar{s:i}

\subsection{}\label{sec:1.1}
Let $S^n$ denote the $n$--sphere, the boundary of the unit ball in $\R^{n+1}$. The regular homotopy classes of immersions  $ f:S^3 \looparrowright S^5 $,
denoted by ${\rm Imm}(S^3,S^5)$,
are identified with the elements of $ \pi_3 (V_3( \R_5 )) \cong \pi_3 (SO(5)) \cong \Z $ by the Hirsch-Smale theory
 \cite{hirsch,smale}.
The correspondence is given by the \emph{Smale invariant}  $ \Omega(f) $ of an immersion $f$.
Besides the original definition of Smale \cite{smale},
there are several equivalent definitions  of $\Omega(f)$
(see \cite{hirsch,HM,szucstwo}). Usually, in all these constructions
there is no identification of a distinguished generator of $\pi_3(SO(5))$,
 hence the Smale invariant is well--defined only up to a sign.

The  subgroup ${\rm Emb}(S^3,S^5)$ of ${\rm Imm}(S^3, S^5)$ consists of the
regular homotopy classes which admit embedding representatives.
By  \cite{HM} this is the subgroup $24\cdot \Z\subset \Z={\rm Imm}(S^3,S^5)$.
For embeddings  the Smale invariant has the following alternative definition
too, given by Hughes and Melvin.
Let $M^4$ be a `Seifert surface'  in $S^5$ of
$f(S^3)$, then  $2\Omega(f)/3$ is the signature of $F$ (up to a sign), cf. \cite{HM}.
(This and similar identities will be reviewed in section~\ref{s:ss}.)

Our goal is to analyse the complex analytic realizations of the elements of the
above two groups.
Let $ \Phi: ( \C^2, 0) \to ( \C^3, 0) $ be a holomorphic germ. We assume that $ \Phi $ is singular only at the origin, that is  $ \{z\,:\, {\rm rank}(d \Phi_z)< 2\} \subset \{0\}$
in a small representative of $(\C^2,0)$. Such a germ, at the level of links of the spacegerms
$(\C^2,0)$ and $(\C^3,0)$, provides
an immersion $ f: S^3 \looparrowright S^5 $ (see \ref{ss:link}).
If an element of ${\rm Imm}(S^3,S^5)$, or ${\rm Emb}(S^3, S^5)$ respectively,
can be realized (up to regular homotopy)
by such an immersion,  we call it {\it holomorphic}. The corresponding subsets
will be denoted by ${\rm Imm}_{hol}(S^3,S^5)$ and  ${\rm Emb}_{hol}(S^3, S^5)$ respectively.

As we will see, ${\rm Imm}_{hol}(S^3, S^5)$ is not symmetric with respect to a sign change of $\Z$,
hence, in order to identify the subset ${\rm Imm}_{hol}(S^3,S^5)$ without any sign-ambiguity,
we will fix a `canonical' generator of  $ \pi_3 (SO(5)) $. This will be  done via the
ismorphisms  $ \pi_3 (U(3))\to \pi_3(SO(6))\to \pi_3(SO(5))$ and by fixing  a canonical generator in $ \pi_3 (U(3))$ (see \ref{ss:sign}).
Sometimes, to emphasize that we work with the Smale invariant with this fixed sign convention,
we refer to it as the {\it sign--refined Smale invariant}.
Our second goal is to determine the correct signs (compatibly with the above
choice of generators) in the existing topological formulas, which were stated only up to a
sign--ambiguity.

\subsection{The set ${\rm Imm}_{hol}(S^3,S^5)$}
One expects that the analytic geometry of  holomorphic
realization imposes some rigidity restrictions, and also provides some further connections with the
 properties of complex analytic spaces.
Mumford already in 1961 in his seminal article \cite{mumford} asked for the
characterization of the Smale invariant of a holomorphic (algebraic) immersion in terms of
the analytic/algebraic geometry. This article provides a complete answer to his question.
A more precise formulation of our guiding questions are:

\begin{question} \label{question}\

(a) Which are the regular homotopy classes ${\rm Imm}_{hol}(S^3,S^5)$ and  ${\rm Emb}_{hol}(S^3, S^5)$
 represented by holomorphic  germs?

(b) How can a certain  regular homotopy class be identified via complex singularity theory,
that is, via algebraic or analytic invariants of the involved analytic spaces?
Furthermore, if some $\Phi$ realizes some Smale invariant (e.g., if its Smale invariant is zero), then what kind of specific analytic properties $\Phi$  must have?
\end{question}
The main results  of this paper provide the following answer in the case of immersions.

\begin{thm}\label{th:main}

(a) ${\rm Imm}_{hol}(S^3,S^5)$ is identified via the sign--refined
Smale invariant $\Omega(f)$ by the set of non--positive integers.

(b) If the immersion $f$ is induced by the holomorphic germ $\Phi$,
then $ \Omega(f) = -C( \Phi)$, where  $C( \Phi) $ is
the number of cross cap points (complex Whitney umbrellas, or pinch points)
of a generic perturbation of $ \Phi $. $ C( \Phi ) $ can be calculated in an algebraic way,
as the codimension of the ideal generated by the determinants of the $2\times 2$-minors
of the Jacobian matrix of $\Phi$.
\end{thm}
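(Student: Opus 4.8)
The plan is to split Theorem~\ref{th:main} into its two geometric halves and prove them in the order (b) then (a), since the algebraic identity $\Omega(f)=-C(\Phi)$ together with a single family of examples realizing every non-positive value will yield (a). For part (b), the strategy is to pass from $\Phi$ to a generic perturbation $\Phi_t$, which by standard singularity theory of maps $(\C^2,0)\to(\C^3,0)$ (Whitney, Mond) has only finitely many cross-cap points and, along the link, induces an immersion $f_t$ regularly homotopic to $f$. Each cross cap contributes a standard local model (the complex Whitney umbrella $(u,v)\mapsto(u,v^2,uv)$), and the key point is to compute the contribution of one such point to the sign-refined Smale invariant. I would do this by realizing $\Omega$ via one of its geometric incarnations recalled in \S\ref{s:ss}: either the Hughes--Melvin formula using a singular Seifert surface, or the Ekholm--Sz\H ucs description counting double points / the normal degree of a generic map of a bounding $4$-manifold. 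The image $\Phi_t(\C^2)$ near a cross cap is precisely a piece of a (singular) Seifert surface for $f_t(S^3)$, so I expect that each cross cap contributes exactly $-1$ (or a fixed $\pm1$ determined by the chosen generator of $\pi_3(U(3))$), whence $\Omega(f)=-C(\Phi)$ once the sign convention of \S\ref{ss:sign} is matched against the complex orientation of the Whitney umbrella.

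For the algebraic computation of $C(\Phi)$: cross-cap points of a generic perturbation are exactly the points where $\mathrm{rank}(d\Phi)<2$, counted with multiplicity. If $J\subset\mathcal O_{\C^2,0}$ is the ideal generated by the three $2\times2$ minors of the Jacobian matrix of $\Phi$, then $V(J)=\{0\}$ by the singularity hypothesis, so $\dim_\C \mathcal O_{\C^2,0}/J<\infty$; a conservation-of-number / flatness argument for the family $\Phi_t$ shows this colength equals the number of cross caps of a generic member, giving $C(\Phi)=\dim_\C \mathcal O_{\C^2,0}/J$. This is the classical formula for the cross-cap number (it appears in Mond's work), so the essentially new content here is only the sign-refined identification with $\Omega$.

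For part (a): the inclusion ${\rm Imm}_{hol}(S^3,S^5)\subseteq\{\,n\le 0\,\}$ is immediate from (b) since $C(\Phi)\ge 0$. For the reverse inclusion one must exhibit, for every $c\ge 0$, a holomorphic germ $\Phi$, singular only at the origin, with $C(\Phi)=c$; the natural candidates are the germs $(u,v)\mapsto(u,v^2,v(v^2-u)^{?})$ or more simply $(u,v)\mapsto (u, v^2, uv+ v^{2c+1})$-type families, for which one checks that $V(J)=\{0\}$ and $\dim_\C\mathcal O/J = c$ by a direct monomial computation. Matching these with the explicit ADE-quotient examples promised in the abstract will also verify the sign.

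\medskip

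The main obstacle is the sign bookkeeping in part (b): one must track the same orientation conventions through (i) the chosen generator of $\pi_3(U(3))$ and its image in $\pi_3(SO(5))$, (ii) whichever topological model of $\Omega$ (Hughes--Melvin signature, or Ekholm--Sz\H ucs double-point/normal-degree formula) is used to evaluate the local contribution, and (iii) the complex orientation of the Whitney-umbrella local model, so that the single cross cap provably contributes $-1$ and not $+1$. Getting this wrong flips ${\rm Imm}_{hol}$ to the non-negative integers, so a careful comparison with at least one independently computable example (e.g.\ the simplest quotient singularity) is essential and will be the linchpin of the argument. The rest — genericity of perturbations, finiteness of $C(\Phi)$, conservation of number for the colength of $J$ — is standard and will be invoked rather than reproved.
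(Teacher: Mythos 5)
Your overall architecture (prove (b) by localizing at the cross caps of a generic perturbation, then deduce (a) from $C(\Phi)\ge 0$ plus an explicit family) matches the paper's, and your treatment of the algebraic formula $C(\Phi)=\dim_\C\mathcal O_{\C^2,0}/J$ via Mond and conservation of number is fine. But the route you choose for the crucial local step --- evaluating the contribution of one cross cap through the Hughes--Melvin or Ekholm--Sz\H{u}cs Seifert-surface formulae --- has a genuine gap: those formulae are themselves only known up to a global sign (they are stated as $\Omega(f)=\pm(\cdots)$), and the invariants $l$ and $L$ entering the second one carry their own delicate orientation conventions. So at best you obtain $\Omega(f)=\pm C(\Phi)$, which does not answer Mumford's question; non-positive versus non-negative is exactly the point. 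Your proposed fix --- calibrating against ``at least one independently computable example'' --- is circular: there is no independent computation of the sign-refined Smale invariant of, say, the $A_1$ quotient immersion; pinning down that sign requires precisely the computation the paper performs, namely fixing generators $[u]\in\pi_3(U(3))$ and $[L]\in\pi_3(SO)$, showing $\pi_3(\tau)[u]=-[L]$, and identifying the class of $d\Phi$ as a map to the complex Stiefel manifold (where one cross cap represents $+[u]$ because its intersection with the degeneracy locus $\mathcal D\subset\Hom(\C^2,\C^3)$ is a positive complex intersection) with the real Jacobian framing defining $\Omega(f)$. That detour through the ``complex Smale invariant'' is not decoration; it is what resolves the sign, and the paper in fact runs your argument in reverse, using the established $\Omega=-C$ to fix the signs in Hughes--Melvin and Ekholm--Sz\H{u}cs. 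A second, smaller issue in the same step: a holomorphic perturbation is not a generic $C^\infty$ map near its cross caps (an isolated rank-drop point has the wrong codimension for a generic real map $M^4\to\R^6$), so before Ekholm--Sz\H{u}cs applies you must further smooth each cross cap by a real deformation such as $(s^2+2\tau\bar s,\,st+\tau\bar s,\,t)$ and then actually compute $l$ and $L$ --- a nontrivial calculation, not a formality.

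For part (a), your candidate family fails: for $\Phi(u,v)=(u,v^2,uv+v^{2c+1})$ the minor ideal is $(2v,\;u+(2c+1)v^{2c},\;-2v^2)=(u,v)$, of colength $1$ for every $c$, so it only realizes $\Omega=-1$. A working family is $\Phi_{-k}(s,t)=(s,t^2,t^3+s^kt)$, for which $J=(t,s^k)$ has colength $k$. This is easily repaired, but as written the surjectivity onto the non-positive integers is not established.
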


The main tool of the proof of Theorem~\ref{th:main} is the concept of  \emph{complex Smale invariant}
of the germ $\Phi$. We introduce it in section~\ref{s:co} and then we prove that it
agrees with $ C( \Phi) $. Next,  in section~\ref{s:proof}
we identify the complex Smale invariant of a germ $ \Phi $ with the (classical) Smale invariant of the
 link of $ \Phi $. The proof of the part (b) of Theorem~\ref{th:main} is then ready up to sign. In \ref{ss:sign} we fix explicit generators of the groups $ \pi_3 (U) $ and $ \pi_3 (SO) $
 and calculate the homomorphism between them.
With this convention the complex Smale invariant of $ \Phi $ is equal to $ C( \Phi) $ and is opposite to
the sign--refined Smale invariant.

Part (b) of Theorem~\ref{th:main} implies that the sign--refined Smale invariant of a complex
analytic realization
is always non--positive. The proof of  part (a) is then completed by Example~\ref{ex:1}, which provides analytic  representatives for all non--positive $\Omega(f)$.

Note that in the present literature  the known ($C^\infty$)
 realizations of certain Smale invariants  $\Omega(f)$
 are rather involved (similarly, as the  computation of $\Omega(f)$ for any concrete $f$), see e.g. \cite{hughes, ekholm3}.
Here we provide very simple polynomial maps realizing all  non--positive Smale invariants.
Furthermore, the computation of $C(\Phi)$ for any $\Phi$ is extremely simple.

Moreover, precomposing the above complex realizations with the $C^\infty$ reflection
$ (s, t) \mapsto (s, \bar{t})$, we get explicit representatives for all positive Smale numbers
 as well, compare \cite[Lemma 3.4.2.]{ekholm3}.

\subsection{The set  ${\rm Emb}_{hol}(S^3,S^5)$.}
Recall that ${\rm Emb}_{hol}(S^3,S^5)$ consists of regular homotopy classes (that is,
sign--refined Smale invariants in $\Z$)
represented by holomorphic germs $\Phi$ whose induced  immersions $S^3\looparrowright S^5$
might not be embeddings, but are regular homotopic with embeddings.

A more restrictive subset consists of those regular homotopy classes (Smale invariants),
which can be represented by holomorphic gems, whose restrictions off origin are embeddings.

\begin{theorem}\label{th:embintro}
(a)  ${\rm Emb}_{hol}(S^3,S^5)= (24\cdot \Z)\cap\Z_{\leq 0}$.

(b) Assume that the immersion $f$ is the restriction at links level of a
holomorphic germ $\Phi$ as above, $f=\Phi|_{S^3}$. Then the following facts are equivalent:

\begin{enumerate}
\item ${\rm rank}\, d\Phi_0=2$ (hence $\Phi$ is not singular),
\item $\Omega (f)=0$,
\item $f:S^3\hookrightarrow S^5$ is an embedding,
\item $f: S^3 \hookrightarrow S^5 $ is the trivial embedding.
\end{enumerate}
\end{theorem}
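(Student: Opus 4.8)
The plan is to obtain (a) as a formal consequence of Theorem~\ref{th:main}(a) together with the Hughes--Melvin identity ${\rm Emb}(S^3,S^5)=24\cdot\Z$ of \cite{HM}, and to prove (b) through the cycle of implications $(1)\Rightarrow(4)\Rightarrow(3)\Rightarrow(1)$ supplemented by the equivalence $(1)\Leftrightarrow(2)$. For (a): by definition an integer $n$ lies in ${\rm Emb}_{hol}(S^3,S^5)$ precisely when $n=\Omega(f)$ for some immersion $f$ induced by a holomorphic germ which is in addition regularly homotopic to an embedding, and the latter condition means exactly $n\in{\rm Emb}(S^3,S^5)=24\cdot\Z$. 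Hence ${\rm Emb}_{hol}(S^3,S^5)={\rm Imm}_{hol}(S^3,S^5)\cap(24\cdot\Z)$, which by Theorem~\ref{th:main}(a) equals $(24\cdot\Z)\cap\Z_{\leq 0}$; every value here is attained, since by Example~\ref{ex:1} all non-positive Smale numbers occur holomorphically, and the ones landing in $24\cdot\Z$ are then automatically regularly homotopic to embeddings.

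For (b), the equivalence $(1)\Leftrightarrow(2)$ is immediate from Theorem~\ref{th:main}(b): $\Omega(f)=-C(\Phi)$, and $C(\Phi)=0$ iff the ideal generated by the $2\times 2$ minors of the Jacobian matrix of $\Phi$ is the whole local ring, i.e. iff one of those minors is a unit, i.e. iff ${\rm rank}\,d\Phi_0=2$. For $(1)\Rightarrow(4)$: if ${\rm rank}\,d\Phi_0=2$, the holomorphic immersion (constant rank) theorem provides biholomorphic coordinate changes at $0\in\C^2$ and at $0\in\C^3$ in which $\Phi$ is the linear inclusion $(s,t)\mapsto(s,t,0)$, so the link of $\Phi$ is the link of a smooth coordinate plane in $\C^3$, which is the standard (trivial) $S^3\subset S^5$. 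The step $(4)\Rightarrow(3)$ is trivial.

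The crux is $(3)\Rightarrow(1)$, which I prove contrapositively: if ${\rm rank}\,d\Phi_0<2$ then $f$ is not an embedding. Let $(V,0)=\{g=0\}\subset(\C^3,0)$ be the reduced hypersurface image of $\Phi$, so $\Phi\colon(\C^2,0)\to(V,0)$ is finite and dominant. If $\Phi$ is not generically injective, then already $\Phi$ restricted off $0$ is generically at least $2$-to-$1$. If $\Phi$ is generically injective, it is the normalization of $(V,0)$, and since ${\rm rank}\,d\Phi_0<2$ it is not an isomorphism, so $(V,0)$ is not normal; as a hypersurface satisfies Serre's condition $S_2$, Serre's criterion forces $R_1$ to fail, i.e. $V$ is singular along a curve $\gamma\ni 0$. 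Because $\Phi$ is immersive off $0$ it has no pinch points on $\gamma\setminus\{0\}$, so $V$ has generically ordinary double points along $\gamma$ and the double-point curve $D$ of $\Phi$ is non-empty with $0\in\overline D$. In either case, for small $\delta$ the preimages under $\Phi$ of points of $S^5_\delta$ all lie on the Milnor sphere $M=\Phi^{-1}(S^5_\delta)\cong S^3$, and there exist distinct points of $M$ with coinciding $\Phi$-images; hence $f=\Phi|_M$ is not injective, so not an embedding. Combined with $(1)\Rightarrow(4)\Rightarrow(3)$ and $(1)\Leftrightarrow(2)$, this closes the cycle and proves (b).

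I expect $(3)\Rightarrow(1)$ to be the only genuine obstacle: one must rule out that a \emph{singular} holomorphic germ induces an injective (hence embedded) link, and the argument hinges on combining Serre's normality criterion for the hypersurface image with the isolated-singularity hypothesis, which is exactly what forbids pinch points away from the origin. A minor point of care is to check that the construction of \ref{ss:link} indeed identifies $f$, up to the usual independence of choices, with $\Phi$ restricted to $\Phi^{-1}(S^5_\delta)$, since that is what makes the double-point count apply directly.
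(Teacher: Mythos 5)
Your proposal is correct, and for part (a), for the implications $(1)\Rightarrow(4)\Rightarrow(3)$ and for $(2)\Leftrightarrow(1)$ it coincides with the paper's argument (the paper also reduces $(2)\Rightarrow(1)$ to $C(\Phi)=0$ via Theorem~\ref{th:main} and then to ${\rm rank}\,d\Phi_0=2$ via Theorem~\ref{th:C}). The genuine divergence is in the crux $(3)\Rightarrow(1)$. The paper offers three proofs: (A) quotes Mumford's deep theorem that a normal surface singularity with link $S^3$ is smooth; (B) passes to a generic deformation $\Phi_\lambda$ and argues that the double-point curve $D$, being a compact analytic curve with empty boundary inside a Stein ball, must be empty, whence $C(\Phi)=0$; (C) uses the Ekholm--Sz\H{u}cs formula \ref{th:ESz} with the embedded image of $\Phi_\lambda$ as Seifert surface. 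You instead work contrapositively with $\Phi$ itself: if ${\rm rank}\,d\Phi_0<2$ then either $\Phi$ fails generic injectivity (double values are dense), or $\Phi$ is the normalization of its hypersurface image $V$ and is not an isomorphism, so $V$ is non-normal; since a reduced hypersurface is $S_2$, Serre's criterion forces ${\rm Sing}(V)$ to contain a curve through $0$, and over its punctured points the immersive finite map $\Phi$ must be at least two-to-one, producing double values on every small link. This is a legitimately different and in some ways more elementary route: it avoids Mumford's theorem, avoids deforming $\Phi$, and directly exhibits the failure of injectivity of $f$; what it gives up is the stronger conclusions that the paper's (B) and (C) extract along the way (namely $C(\Phi)=0$, resp.\ $\Omega(f)=0$, before invoking $(2)\Rightarrow(1)$). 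One small expository point: from ``$\Phi$ is immersive over $p\in{\rm Sing}(V)\setminus\{0\}$'' to ``$|\Phi^{-1}(p)|\geq 2$'' you implicitly use that a finite surjective map onto $V$ which is an immersion near a one-point fibre makes $V$ smooth at that value; this is standard but should be stated, since ruling out pinch points alone does not literally yield ``generically ordinary double points along $\gamma$.''
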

Again, we wish to emphasize that the previous
construction of the generator of $24\cdot \Z={\rm Emb}(S^3,S^5)$ (that is, of a smooth embedding
with $\Omega(f)=\pm 24$) is complicated, it is more existential than constructive \cite{HM}.
On the other hand, by our complex realizations, for any given $\Omega(f)\in 24\cdot \Z$
we provide several easily defined germs, which are immersions, and  are regular homotopic with embeddings.
Moreover, part (b) says that it is impossible to find holomorphic representatives $\Phi$ such that
$\Phi|_{S^3}$ is already embedding (except for $\Omega(f)=0$).

The essential  parts of Theorem \ref{th:embintro}(b) are the implications
(2) $\Rightarrow$ (1) and  (3) $\Rightarrow$ (1), which conclude an analytic statement
from topological ones. The proof (2) $\Rightarrow$ (1) is based on Theorem \ref{th:main}, which
recovers the vanishing of the analytic invariant $C(\Phi)$ from the `topological vanishing' $\Omega(f)=0$.

A possible proof of
($\Phi|_{S^3}$ embedding) $\Rightarrow$ (${\rm rank}\, d\Phi_0=2$)
is based on a  deep theorem of Mumford,  which says that if the link of a complex normal surface
singularity is $S^3$ then the germ should be non--singular \cite{mumford}.
We will provide two other  possible proofs too: one of them is based on Mond's Theorem
\ref{th:C}, the other on a theorem of Ekholm--Sz\H{u}cs \cite{ESz} (a generalization
of the already mentioned Hughes--Melvin resuls \cite{HM}). These theorems will
be discussed in connection with properties of Seifert surfaces in section \ref{s:ss} as well.

\subsection{} The literature of singular analytic germs $\Phi:(\C^2,0)\to (\C^3,0)$ is huge with several deep and interesting results and invariants, see e.g. the articles of
D. Mond and V. Guryunov and the references therein. In singularity classifications
finitely determined or finite codimensional germs are central (with respect to some
equivalence relation). For germs $\Phi:(\C^2,0)\to (\C^3,0)$
Mond proved that the finiteness of the right-left codimension is equivalent with the finiteness
 of three invariants: the number of (virtual) cross caps $C(\Phi)$,
 the number of  (virtual) ordinary triple points $T(\Phi)$,
  and an other invariant $N(\Phi)$ measuring the non--transverse selfintersections  \cite{Mond2}.
This is more restrictive than our assumption $ \{z\,:\, {\rm rank}(d \Phi_z)< 2\}=\{0\}$,
which requires the finiteness of $C(\Phi)$ only.

However, it is advantageous   to consider this larger class, since there are
many key families of
germs  with infinite right-left codimension, but with finite $C(\Phi)$, and they
produce interesting connections with other areas as well (see e.g. the next example).

\begin{example}\label{ex:ade} Consider a simple hypersurface singularity $(X,0)\subset (\C^3,0)$
(that is, of type A--D--E). They are quotient singularities, that is $(X,0)\simeq(\C^2,0)/G$
for certain finite subgroup $G\subset GL(2,\C)$. Let $K$ be the link of $(X,0)$ (e.g.,
it is a lens space for A-type), and consider
the regular $G$--covering $S^3\to K$.
This composed with the inclusion $K\hookrightarrow S^5$ provides an {\it immersion}
$S^3\looparrowright S^5$. Hence, the universal cover of each  A--D--E singularity
automatically provides an element
of ${\rm Imm}_{hol}(S^3,S^5)$, which usually have infinite right--left codimension.
The corresponding Smale invariants are given in section~\ref{s:ex}. E.g.,
$-\Omega(A_{n-1})=n^2-1$, hence $A_4$ represents (up to regular homotopy) a generator of
$24\cdot \Z={\rm Emb}(S^3,S^5)$.

Recently Kinjo, using the plumbing graphs of the links of
A--D singularities and $C^\infty$--techniques, constructed
immersions with the same Smale invariants
as our $-C(\Phi)$ \cite{kinjo}. Hence, the natural complex analytic
maps $(\C^2,0)\to (X,0)\subset (\C^3,0)$  provide analytic realizations of the $C^{\infty}$
constructions of \cite{kinjo}, and emphasize their distinguished nature.
\end{example}

\subsection{Smale invariants and the geometry of Seifert surfaces.}
In section \ref{s:ss} we review three major topological  theorems,
which recover the classical Smale invariant in terms of the geometry of their
Seifert surfaces (namely the Hughes--Malvin theorem \cite{HM}, and two
 theorems of Ekholm--Sz\H{u}cs \cite{ESz}).
 All of them carry the sign ambiguity of the Smale invariant (which sometimes
is also caused by the nature of their proofs).

Section \ref{s:eszcomp} has two goals. First, we will indicate  the correct sign
in all these formulae, whenever the Smale invariant is replaced by the sign--refined
Smale invariant. Moreover, we also determine the Seifert type invariants
in terms of $C(\Phi)$ and $T(\Phi)$, whenever the immersion is induced by a holomorphic
germ $\Phi$.

When $f$ is a generic immersion, the invariant $L(f) $ of generic immersions introduced by Ekholm \cite{ekholm3} is also expressed in terms of $C(\Phi)$ and $T(\Phi)$.

\subsection{Acknowledgements.} The authors are very grateful to Andr\'as Sz\H{u}cs for
several very helpful conversations regarding different definitions and properties
of the  Smale invariant. Without his advises this works would not be completed.
We also thank Tam\'as Terpai for several discussions and advices
regarding topological invariants of
singular maps.

\section{Basic definitions and preliminary properties}\labelpar{s:bas}
\subsection{The immersion associated with $\Phi$}\labelpar{ss:link}
If $(X,0)$ is a complex analytic germ with an isolated singularity $0\in X$, then its link
 $K$ can be defined as follows. Set a real analytic map $\rho:X\to [0,\infty)$ such that
 $\rho^{-1}(0)=\{0\}$. Then, for $\epsilon>0$ sufficiently small, $K:=\rho ^{-1}(\epsilon)$
 is an oriented manifold, whose isotopy class (in $X\setminus \{0\}$)
 is independent of the choices, cf.
  Lemma (2.2) and Proposition (2.5) of \cite{looijenga}.
 E.g., if $(X,0)$ is a subset of $(\C^N,0)$, then one can take the restriction of $\rho(z)=|z|$ (the norm of $z$). In this way, the link of $(\C^N,0)$ is the sphere $S^{2N-1}_\epsilon$.
 Nevertheless, the general definition  is very convenient
 even if $(X,0)=(\C^N,0)$.

Let  $ \Phi: ( \C^2, 0)  \to (\C^3, 0) $ be a holomorphic germ singular only at $0$
(as in the introduction). Define $\rho:(\C^2,0)\to [0,\infty)$ by $\rho(z)=|\Phi(z)|$.
Since  $\Phi^{-1}(0)=\{0\}$ (if $\Phi^{-1}(0)$ would be a  positive dimensional
germ, then along it the rank of $d\Phi_z$ would be $<2$). Hence  $\rho^{-1}(0)=\{0\}$
(in a small representative).

\begin{lemma}\label{cor:epsilon}
There exists an $ \epsilon_0 > 0 $ sufficiently small such that
$\mathfrak{B}_\epsilon:=\Phi^{-1} ( \{z:|z|\leq \epsilon\} )$  is a
non-metric $C^\infty$ closed ball around the origin of\, $\C^2$. Its boundary,
$\Phi^{-1}(S^5_{\epsilon})$ is canonically diffeomorphic to $ S^3$ for
any $ \epsilon < \epsilon_0 $. In fact, for $\tilde{\epsilon}$ with
$0<\tilde{\epsilon}\ll \epsilon$, any standard metric sphere
$S^3_{\tilde{\epsilon}}$ sits in $\mathfrak{B}_\epsilon$, and it is isotopic with
$ \Phi^{-1} (  S^5_{ \epsilon} ) $ in  $\mathfrak{B}_\epsilon\setminus 0$.
\end{lemma}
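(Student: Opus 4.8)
The plan is to reduce the statement to the fact that $\Phi$ is \emph{finite} and \emph{flat} as a germ, so that $\mathfrak{B}_\epsilon=\Phi^{-1}(\overline{B}^6_\epsilon)$ is a small Milnor-type neighbourhood of $0$ in $\C^2$, and then to invoke the standard conical structure / fibration arguments. First I would observe that since $\Phi^{-1}(0)=\{0\}$ (a positive-dimensional fibre would force $\operatorname{rank} d\Phi_z<2$ along it, contradicting the isolated-singularity hypothesis), the germ $\Phi\colon(\C^2,0)\to(\C^3,0)$ is a finite map germ; by the geometric version of the Weierstrass preparation theorem it has a representative $\Phi\colon U\to V$ which is proper and finite-to-one onto its image, with $\Phi^{-1}(0)=\{0\}$. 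Properness and finiteness then give that $\Phi^{-1}(\overline{B}^6_\epsilon)$ is compact and shrinks to $\{0\}$ as $\epsilon\to 0$, so for $\epsilon$ small it is contained in any prescribed metric ball around $0\in\C^2$.

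Next I would set up the transversality/regular-value argument. The function $\rho(z)=|\Phi(z)|^2$ is real analytic with $\rho^{-1}(0)=\{0\}$; I would invoke the curve selection lemma (or the Łojasiewicz inequality) to find $\epsilon_0>0$ such that for all $0<\epsilon<\epsilon_0$ the value $\epsilon^2$ is a regular value of $\rho$ on $U\setminus\{0\}$ — equivalently, that $\Phi$ is transverse to the sphere $S^5_\epsilon$. This uses that $\Phi$ is an immersion off the origin: where $d\Phi_z$ is injective, $\Phi^{-1}(S^5_\epsilon)$ is cut out transversally exactly when $\Phi(z)$ is a regular value of the radial function on the target, and the set of critical values of $\rho$ is a subanalytic subset of $[0,\infty)$ not accumulating at $0$ except possibly at $0$ itself, hence has isolated image, giving the threshold $\epsilon_0$. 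Consequently $\Phi^{-1}(S^5_\epsilon)=\rho^{-1}(\epsilon^2)$ is a smooth closed $3$-manifold and $\mathfrak{B}_\epsilon$ is a smooth compact $4$-manifold with that boundary.

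Then I would identify $\mathfrak{B}_\epsilon$ and its boundary. For the \emph{isotopy} and \emph{diffeomorphism-to-$S^3$} claims, I would use the standard argument that a real-analytic (or subanalytic) function with an isolated zero has a conical structure near that zero: the gradient flow of $\rho$ (suitably normalised, or the flow of the Milnor vector field) gives a diffeomorphism of $\mathfrak{B}_\epsilon\setminus\{0\}$ with $\Phi^{-1}(S^5_\epsilon)\times(0,1]$, so $\mathfrak{B}_\epsilon$ is the cone on its boundary, hence a (non-metric) $C^\infty$ ball; since a standard metric ball $B^4_{\tilde\epsilon}$ with $\tilde\epsilon\ll\epsilon$ is another such conical neighbourhood of $0$ sitting inside $\mathfrak{B}_\epsilon$, the two boundaries $S^3_{\tilde\epsilon}$ and $\Phi^{-1}(S^5_\epsilon)$ are isotopic in $\mathfrak{B}_\epsilon\setminus\{0\}$ along the flow, and in particular $\Phi^{-1}(S^5_\epsilon)\cong S^3$ canonically up to isotopy. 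The diffeomorphism type of the $4$-ball follows by h-cobordism/smoothing considerations in dimension $4$ only after we know the boundary is $S^3$; more cleanly, the conical structure already exhibits $\mathfrak{B}_\epsilon$ as $C^\infty$-diffeomorphic to the standard ball since the cone on $S^3$ is the ball.

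The main obstacle I anticipate is making the transversality threshold $\epsilon_0$ genuinely \emph{uniform} and ensuring the conical structure is $C^\infty$ rather than merely topological: the naive gradient flow of a subanalytic function need not be complete or smooth across level sets, so I would either (i) appeal to the existence of a smooth ``Milnor vector field'' $v$ with $\langle v,\nabla\rho\rangle>0$ on a punctured neighbourhood, constructed via partitions of unity and controlled integral curves (this is where real-analyticity of $\rho=|\Phi|^2$, hence Łojasiewicz, is essential), or (ii) cite the general structure theorems for subanalytic sets (e.g. the cone lemma in \cite{looijenga}, referenced in \ref{ss:link}) applied to the pair $(\mathfrak{B}_\epsilon,0)$. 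Once the vector field is in hand the rest is routine flow-box integration; the care needed is purely in the analytic setup of that field and in checking that all choices (of $\epsilon$, of the field) lie below the common threshold $\epsilon_0$.
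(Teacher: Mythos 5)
Your proposal is correct and follows essentially the same route as the paper: the paper's entire argument is to note that $\rho(z)=|\Phi(z)|$ is real analytic with $\rho^{-1}(0)=\{0\}$ (since $\Phi^{-1}(0)=\{0\}$, as a positive-dimensional fibre would force $\operatorname{rank} d\Phi_z<2$ along it) and then to invoke the general conic-structure results for links of isolated singularities, namely Lemma (2.2) and Proposition (2.5) of \cite{looijenga}, which give both the smoothness of $\rho^{-1}(\epsilon)$ for small $\epsilon$ and the independence of its isotopy class from the choice of $\rho$ (hence the isotopy with $S^3_{\tilde\epsilon}$, which is the link for the standard choice $\rho(z)=|z|$). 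You have essentially unpacked the proof of those cited results (curve selection, Milnor vector field, cone structure), which is fine but not a different method.
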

In the sequel $ \Phi^{-1} (  S^5_{ \epsilon } ) $ and $S^3_{\tilde{\epsilon}}\subset \C^2$
will be identified. When it is important to differentiate them we will use the notation
$ \mathfrak{S}^3:=  \Phi^{-1} (  S^5_{ \epsilon} ) $.
We write also $S^3=S^3_{\tilde{\epsilon}}$ and $S^5=S^5_\epsilon$.

\begin{defn}
We call the restriction  $ f= \Phi |_{ \mathfrak{S}^3 } : \mathfrak{S}^3 \looparrowright S^5 $ the immersion associated with  $ \Phi $. (It's regular homotopy class is independent of all the choices.)
\end{defn}


\subsection{The number of cross caps.}\labelpar{ss:Wh}
Let $ \Phi: ( \C^{2}, 0)  \to (\C^{3}, 0) $ be a holomorphic germ singular only at $0$.
Consider a generic holomorphic deformation
$ \Phi_{ \lambda} $ of $ \Phi = \Phi_0 $. The singular points of $ \Phi_{ \lambda \neq 0 } $ are
cross caps (or complex Whitney umbrellas), i.e. they have the local form $ ( s, t) \mapsto ( s^2, st, t) $
in some local holomorphic coordinates.
Their  number  does not  depend on the deformation $ \Phi_{\lambda} $,
it is an invariant of $ \Phi $, cf. \cite{Mond1,Mond2}.
We denote it by $ C ( \Phi ) $.

 $ C ( \Phi ) $ can be computed in an algebraic way as well.
 Let $ M_j: Hom ( \C^2 , \C^{3} ) \to \C $
 denote the determinants of the three $ 2 \times 2 $ minors ($ j=1, 2, 3 $).
 Let $ J $ be the ideal of the local ring $ \mathcal{O}_{ \C^2, 0} $
 generated by the elements $ M_j\circ d\Phi$, where $d\Phi$ is the complex Jacobian
 matrix.
 $J$ has finite codimension exactly when $\Phi$ is immersion off the origin.
\begin{thm}\label{th:C}\cite[Proposition 1]{Mond1}
$ C( \Phi ) = \dim_\C \, (\mathcal{O}_{ \C^2, 0}/J)$.
\end{thm}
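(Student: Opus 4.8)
The plan is to prove that $C(\Phi)$, defined as the number of cross caps in a generic deformation, equals the codimension $\dim_\C(\mathcal{O}_{\C^2,0}/J)$ of the Jacobian-minor ideal. The key conceptual point is that a cross cap is precisely the simplest singularity of a map $(\C^2,0)\to(\C^3,0)$, and at a cross cap the ideal $J$ cuts out exactly one reduced point. So the strategy is a standard conservation-of-number argument for a finite map-germ: show that $\dim_\C(\mathcal{O}_{\C^2,0}/J)$ is well-defined and finite under our hypotheses, show it is invariant under (flat) deformation, and then compute it on a generic deformation where it counts cross caps with multiplicity one each.

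First I would set up the algebraic framework. Let $\Phi=(\phi_1,\phi_2,\phi_3)$ and form the $3\times 2$ Jacobian matrix $d\Phi$ with entries $\partial\phi_i/\partial z_j$; the three maximal minors $M_1,M_2,M_3$ (up to sign, the $2\times 2$ subdeterminants) generate $J\subset\mathcal{O}_{\C^2,0}$. By hypothesis $\Phi$ is an immersion off the origin, which means $V(J)=\{0\}$ set-theoretically, so by the Nullstellensatz $J$ is $\m$-primary and $\mathcal{O}_{\C^2,0}/J$ is a finite-dimensional $\C$-vector space; call its dimension $c(\Phi)$. Now consider a generic holomorphic deformation $\Phi_\lambda$ as in \ref{ss:Wh}. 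The family of ideals $J_\lambda$ generated by the minors of $d\Phi_\lambda$ defines a subscheme of $\C^2\times(\text{base})$, finite over the base near $(0,0)$; the fiber dimension $\dim_\C(\mathcal{O}_{\C^2,0}/J_0)$ is upper semicontinuous, and by flatness (the relative ring is Cohen--Macaulay, being $\mathcal{O}_{\C^2}$ modulo a regular sequence of the right codimension, or by the conservation of number for finite maps) it is actually constant, equal to the sum over the points of the nearby fiber $V(J_\lambda)$ of the local multiplicities. Thus $c(\Phi)=\sum_{p\in V(J_\lambda)}\dim_\C(\mathcal{O}_{\C^2,p}/J_\lambda)$.

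Next I would show that for generic $\lambda$ the points of $V(J_\lambda)$ are exactly the cross caps of $\Phi_\lambda$ and each contributes $1$ to the sum. A point $p$ lies in $V(J_\lambda)$ iff $\Phi_\lambda$ fails to be an immersion at $p$, i.e. $\mathrm{rank}\,d\Phi_\lambda(p)<2$. For a generic deformation these singular points are (by \ref{ss:Wh}, or by Mond's classification) cross caps, meaning $\Phi_\lambda$ has local form $(s,t)\mapsto(s^2,st,t)$ in suitable coordinates. It remains to verify directly that for the normal form $g(s,t)=(s^2,st,t)$ the ideal $J_g$ generated by the $2\times2$ minors of $dg$ is exactly $\m=(s,t)$, so $\dim_\C(\mathcal{O}_{\C^2,0}/J_g)=1$; this is a one-line computation ($dg$ has rows $(2s,0),(t,s),(0,1)$, and the minors are $2s$, $s$, $2s^2-0\cdot t$ up to the obvious ones, generating $(s,t)$). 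Since both $J$ and the property of being a cross cap are preserved under holomorphic coordinate changes on source and target (the minor ideal transforms by a unit times an invertible matrix action), the local contribution at each cross cap of $\Phi_\lambda$ is $1$, and hence $c(\Phi)$ equals the number $C(\Phi)$ of cross caps, as claimed.

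The main obstacle is the invariance (conservation of number) step: one must argue that $\dim_\C(\mathcal{O}_{\C^2,0}/J_\lambda)$ does not drop as $\lambda\to 0$, equivalently that no multiplicity escapes to infinity or disappears. The clean way is flatness: $\mathcal{O}_{\C^2\times\Lambda}/J$ is flat over $\mathcal{O}_\Lambda$ because $J$ is generated by the right number of elements cutting out a codimension-$2$ (in the fiber) locus that remains finite over $\Lambda$ — i.e. $J$ locally defines a complete intersection, so the relative quotient is Cohen--Macaulay and hence flat over the (regular, one-dimensional) base, giving constancy of the fiber length. One should be a little careful that the minors, though there are three of them in a $2$-dimensional ring, still cut out the expected codimension and behave like a "generically complete intersection" — this is exactly the content of $J$ having finite colength, and standard results on determinantal ideals (the expected codimension of the ideal of maximal minors of a generic $3\times 2$ matrix is $2$) make this rigorous. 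With that in hand the rest is the elementary local computation at the cross cap normal form.
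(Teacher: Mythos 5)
Your argument is correct in substance, but note that the paper does not actually prove this statement: it is quoted from \cite[Proposition 1]{Mond1}, so you have supplied a proof of the cited result rather than paralleled one. Your route --- conservation of number for the finite determinantal scheme $V(J)$ under deformation, plus the local computation at the normal form $(s,t)\mapsto(s^2,st,t)$ --- is essentially the standard (Mond-style) argument. The one step that must be stated with care is flatness: as you yourself observe, $J$ has three generators in a two-dimensional ring and is not a complete intersection, so ``modulo a regular sequence of the right codimension'' is not literally available; the correct input is the Eagon--Northcott/Hochster--Eagon theorem that an ideal of maximal minors attaining the expected codimension ($(3-2+1)(2-2+1)=2$ here) is perfect, hence the relative quotient $\mathcal{O}_{\C^2\times\Lambda}/\mathcal{J}$ is Cohen--Macaulay of dimension $\dim\Lambda$; being in addition finite over the smooth base $\Lambda$, it is flat by miracle flatness, and the fibrewise length is constant. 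With that named, your argument closes (there is a harmless slip in your list of minors of the normal form: they are $2s^2$, $2s$ and $t$, not ``$2s$, $s$, \dots''; the conclusion $J=\m$ stands, giving local contribution $1$). For comparison, the closest the paper comes to a proof is topological: in Theorem \ref{th:Comp} it identifies $\Omega_\C(\Phi)$ with the linking number of $d\Phi|_{S^3}$ with the rank-drop variety $\mathcal{D}\subset\Hom(\C^2,\C^3)$ and evaluates it on a generic perturbation, which is the homological shadow of your length/flatness argument; combined with the explicit degree computation (\ref{eq:other}) this yields the paper's advertised ``new proof'' of Mond's theorem, but only for corank-$1$ germs. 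Your algebraic argument covers the general case directly, at the price of importing the determinantal-ideal machinery.
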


\subsection{The number of triple points.}\label{ss:trip}
If $\Phi_\lambda$ is a generic deformation as above, then the singular points
of the {\it image} of $ \Phi_{\lambda \neq 0} $
 might have the following types:  self-transversal double points, cross caps and triple points, cf. \cite{Mond1,Mond2}. The double point set has complex dimension $1$, while triple points are
  isolated. 
If the codimension of the second fitting ideal of $ \Phi $ is finite, say $T(\Phi)$, then
the number of triple points $ T( \Phi_{\lambda \neq 0} ) $ of $ \Phi_{\lambda \neq 0} $
is independent of the deformation and $\lambda$, it is exactly $ T(\Phi)$.

\subsection{The Smale-invariant}\labelpar{ss:Smale} Let $ f : S^3 \looparrowright \R^5 $ be an immersion. Instead of the original definition of Smale \cite{smale} we adopt the
construction of Hughes and Melvin for  the \emph{Smale invariant} of $ f $, see \cite{HM}, compare also with \cite{szucstwo}.
Let $ U $ be a tubular neighborhood of the standard $ S^3 \subset \R^5 $, and let $ F: U \looparrowright \R^5 $ be an orientation preserving immersion extending $f $, i.e. $ F|_{S^3} = f $. Let $ TU $
be the tangent bundle of $U$. It inherits a global trivialization from
the natural trivialization of $T\R^5 $. In particular,  there is a map (the Jacobian matrix)
\[ dF|_U : U \to GL^+ (5, \R ).
\]
Its homotopy class is the Smale invariant of $f$:
\begin{equation}\label{eq:Smale}
\Omega(f) = [ dF|_{S^3} ] \in \pi_3 (SO(5))
\end{equation}
(via the  homotopy equivalence induced by the inclusion $ SO(5) \subset GL^+ (5, \R ) $).

\begin{remark}\label{r:lie}
 If $ G $ is a connected Lie group, or a factor of it by a
 closed connected subgroup, then $ \pi_n (G) $ can be identified with the homotopy classes
 of the continuous maps $ f: S^n \to G $ without any base point. Furthermore, for Lie
 groups, the group operation of $\pi_n(G)$ agrees with that induced by
 the pointwise multiplication in $ G$; cf. \cite[p. 88 and 89]{steenrod}.
\end{remark}

\begin{prop}\labelpar{prop:Smale} $ \Omega(f) $ does not depend on the choice of
$U$ and $ F $, it
depends only on the regular homotopy class of $ f $, and
$ \Omega: {\rm Imm} (S^3, \R^{5} ) \to \pi_3 (SO(5) ) $ is a bijection.

\end{prop}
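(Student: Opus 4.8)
The plan is to verify separately that $\Omega(f)$ is well defined, that it is a regular-homotopy invariant, and that the induced map is a bijection; all three follow from Hirsch--Smale theory, but it is worth sketching how. First, for well-definedness: given two orientation-preserving immersion extensions $F_0,F_1:U\looparrowright\R^5$ of $f$, restrict attention to the homotopy class of $dF_i|_{S^3}:S^3\to GL^+(5,\R)\simeq SO(5)$. Using the natural global trivialization of $T\R^5$ (and hence of $TU$), the differential of an immersion of $U$ becomes literally a map $U\to GL^+(5,\R)$, so one must check that $dF_0|_{S^3}$ and $dF_1|_{S^3}$ are homotopic. This is exactly the content of the Hirsch--Smale immersion theorem applied to the pair: an immersion of $U$ rel nothing is, up to regular homotopy, the same as a bundle monomorphism $TU\to T\R^5$, and over the $3$-complex $S^3$ two such monomorphisms extending the fixed $df$ along $S^3$ differ by an element of $\pi_4(SO(5))$-ish obstruction data which vanishes here; more simply, one observes $U\simeq S^3$ is homotopy equivalent to a $3$-complex so the relevant classifying space $V_3(\R^5)=SO(5)/SO(2)$ has the same $\pi_3$ whether we prescribe data on $S^3$ or on $U$, giving agreement of the restricted Jacobian classes. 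The shift from $GL^+(5,\R)$ to $SO(5)$ is harmless since $SO(5)\hookrightarrow GL^+(5,\R)$ is a homotopy equivalence (Gram--Schmidt deformation retraction), and by Remark~\ref{r:lie} we may work with free (basepoint-free) homotopy classes into the Lie group $SO(5)$.

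Second, invariance under regular homotopy: if $f_0,f_1:S^3\looparrowright\R^5$ are regularly homotopic, the regular homotopy is an immersion $H:S^3\times[0,1]\looparrowright\R^5$; thicken it to an immersion of $U\times[0,1]$ extending a chosen $F_0$ at one end, apply the same trivialization, and obtain a homotopy of the maps $dF_t|_{S^3}$ in $GL^+(5,\R)$, hence equality of $\Omega(f_0)$ and $\Omega(f_1)$ in $\pi_3(SO(5))$. Conversely, surjectivity and injectivity together are the Smale--Hirsch classification: ${\rm Imm}(S^3,\R^5)$ is in natural bijection with homotopy classes of sections of the bundle $V_3(T\R^5)\to S^3$, i.e. with $[S^3,V_3(\R^5)]$, and since $S^3$ is parallelizable and $3$-connected-enough the obstruction-theory count collapses to $\pi_3(V_3(\R^5))$. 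One then identifies $\pi_3(V_3(\R^5))\cong\pi_3(SO(5))$ from the fibration $SO(2)\to SO(5)\to V_3(\R^5)$ (the fiber $SO(2)=S^1$ has $\pi_3=\pi_2=0$, so $\pi_3(SO(5))\xrightarrow{\ \cong\ }\pi_3(V_3(\R^5))$), and checks that under this identification the Smale--Hirsch bijection sends $f$ precisely to the class $[dF|_{S^3}]$ defined above — this is the compatibility that makes our $\Omega$ the ``same'' invariant as Smale's. Here one uses $\pi_3(SO(5))\cong\Z$, e.g. from $SO(5)\simeq SO$ in this range together with Bott periodicity, or directly.

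The main obstacle is the last compatibility step: matching our concrete Jacobian-of-an-extension definition with the abstract Hirsch--Smale bijection, and in particular making sure no information is lost when passing from an immersion of the thickened $U$ (which supplies a full frame field) to the restriction on $S^3$. The clean way is to note that an orientation-preserving immersion $F:U\looparrowright\R^5$, via the ambient trivialization, is the same data as a nullhomotopy-free map $U\to GL^+(5,\R)$, and that two immersions of $U$ restricting to regularly homotopic immersions of $S^3$ are themselves regularly homotopic through immersions of $U$ (again Hirsch's theorem, since $U$ deformation retracts to $S^3$ and the obstruction to extending a regular homotopy lives in $H^i(U,S^3;\pi_i(\cdot))$ groups that vanish). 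Granting this, well-definedness, invariance, and bijectivity all reduce to the statement that $[S^3,SO(5)]=\pi_3(SO(5))$ classifies immersions $S^3\looparrowright\R^5$, which is Hirsch--Smale. I would cite \cite{hirsch,smale,HM} for the precise classification statement and spend the written proof mostly on the well-definedness of the extension and on the $\pi_3(SO(5))\cong\pi_3(V_3(\R^5))$ identification, treating the rest as recollection.
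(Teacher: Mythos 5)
Your argument is correct and follows essentially the same route as the paper, which simply cites Smale for the bijection ${\rm Imm}(S^3,\R^5)\to\pi_3(V_3(\R^5))$ and Hughes--Melvin for the independence of the extension $F$ and the compatibility via $\pi_3(SO(5))\cong\pi_3(V_3(\R^5))$. You merely unpack those citations (the vanishing of the obstructions in $H^i(S^3;\pi_i(SO(2)))$ for the choice of extension, and the fibration $SO(2)\to SO(5)\to V_3(\R^5)$ for the identification of $\pi_3$), which is consistent with what the paper leaves to the references.
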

Indeed, Smale proved that his original invariant gives a bijection between
$ \imm (S^3, \R^{5} )$ and $ \pi_3 (V_3 ( \R^{5} )) $, cf. \cite{smale},
where $V_3(\R^5)$ denotes  the real Stiefel manifold (the space of
 linear independent $3$-frames of $ \R^5$).
Hughes and Melvin proved that their alternative definition (\ref{eq:Smale})
of the Smale invariant does not depend on the choice of $ F $ and agrees with the original Smale invariant through the natural
group isomorphism $ \pi_3 (SO(5)) \to \pi_3 (V_3 ( \R^{5} )) $.

Note that the standard embedding $ SO(5) \hookrightarrow SO $ induces a group isomorphism between $ \pi_3 (SO(5)) $ and $ \pi_3 (SO) $. These groups are (a priori non--canonically) isomorphic to $ \Z $.

\vspace{2mm}

We wish to emphasize the following facts regarding orientations of $S^3$ and $\R^5$ 
 and their effects on the above definition. (This might serve also as a small guide for the next sections.)
  
Let us think about $S^3$ as the subset of $\R^4$, the boundary of the 4--ball $B^4$ in $\R^4$,
or via embedding $\R^4\subset \R^5$, as a subset of $\R^5$. We do not wish to fix any orientation on it 
as the orientation of $\partial B^4$
(that would depend on the convention how one defines the orientation of the boundary of an oriented
manifold --- called, say, `boundary convention'). 

Note that in the above definition
of the Smale invariant, not the orientation of $S^3$ is used, but the orientation
of the tubular neighborhood $U\subset \R^5$ and the orientation of the target $\R^5$. Moreover, 
$\Omega(f)$ is unsensitive to the orientation change simultaneously in both $\R^5$.
In this way we get an element $\Omega(f)\in[S^3,SO(5)]$, which is independent of the orientation of 
$\R^5$ and does not use any orientation of $S^3$. Furthermore, if we define 
(this will done in \ref{ss:sign}) a generator $[L]$ in $[S^3,SO(5)]$, using again only
the embedding $S^3\subset \R^5$ (and no other orientation data), then $\Omega(f)$ identifies with
an element of $\Z$, such that its definition is independent of any orientations of $S^3$ and 
$\R^5$, hence also of the `boundary convention'. 

All our discussions are in this spirit (except sections \ref{s:ss} and \ref{s:eszcomp}, where
oriented Seifert surfaces are treated): we run orientation and `boundary convention' free
definitions and statements (associated with  $S^3$, regarded as a subset of  $\R^5$, and immersions 
$S^3\looparrowright \R^5$).

 However, if we fix a `boundary convention', then $S^3$ (in $\R^5$) will get an orientation (as $\partial 
 B^4$). Then, for any {\it oriented abstract $S^3$}, let us denote it by ${\bf S}^3$, and immersion
 ${\bf S}^3\looparrowright \R^5$, we can define the Smale invariant $\Omega^a(f)\in \Z$ 
 (here `a' refers to the `abstract' ${\bf S}^3$) by identifying ${\bf S}^3$ with the embedded 
 $S^3\subset \R^5$ by an orientation preserving diffeomorphism and taking  $\Omega(
 S^3\to {\bf S}^3\looparrowright \R^5)$. This $\Omega^a(f)$
 depends on the `boundary convention', since the identification 
 $S^3\to {\bf S}^3$ depends on it: changing the convention we change $\Omega^a(f)$ by a sign. 
 
 This point of view should be adapted when ${\bf S}^3$ will be the (oriented) boundary of an oriented 
 Seifert surface. But till section \ref{s:ss} we will focus on the first version, 
 $\Omega(f)$.
 
 \vspace{2mm}

Next, in the definition of $\Omega(f)$,
one can replace $\R^5$ by $S^5$, where $S^5$ is the boundary of the ball in $\R^6$, and
$S^3$ is embedded naturally in $S^5$.
By taking a generic
point $P\in S^5$ we identify $S^5\setminus \{P\}$ with $\R^5$, and 
$U$ will be replaced by a tubular neighborhood of  $S^3$ in $S^5$. Then the previous definition of $\Omega(f)$ can be repeated for any immersion $S^3\looparrowright S^5$ (where
$S^3\subset S^5$) providing an element $[S^3,SO(5)]$, which becomes an integer once  
 a generator $[L]$ is constructed from the embedding $S^3\subset S^5$. Again, this Smale invariant 
$\Omega(f)$ will be independent of the orientations of $S^3$ and $S^5$, hence of the 
`boundary convention' as well.  

For immersions defined in subsection \ref{ss:link}, $\mathfrak{S}^3$ evidently sits naturally in 
$\C^2=\R^4$ (hence also in a certain $\mathfrak{S}^5=S^5\subset \C^3=\R^6$, cf. \ref{ss:6.1}).
This together with definition
(\ref{eq:Smale})  provide $\Omega(f)$ (which becomes an integer once $[L]$ will be constructed
in \ref{ss:sign}).

\section{The complex Smale invariant}\labelpar{s:co}
\subsection{} In this section we define
 the \emph{complex Smale invariant} $ \Omega_{\C} ( \Phi ) $
for a holomorphic germ $ \Phi: ( \C^2, 0)  \to (\C^3, 0) $,
 singular only at $ 0 $.  It will be the bridge between $ C ( \Phi ) $ and $\Omega(f)$.

\begin{defn}
Consider the map (with target the complex Stiefel variety $V_2(\C^3)$):
\[ d \Phi |_{ S^3} : S^{3} \to V_2 ( \C^3) \]
defined via the natural trivialization of the complex tangent bundles $T\C^2$
and $T\C^3$.
By definition, the complex Smale invariant of $ \Phi $ is the  homotopy class:
\[ \Omega_{\C} ( \Phi ) = [ d \Phi |_{ S^{3}}  ] \in \pi_{3} ( V_2 ( \C^{3}) ). \]
\end{defn}
\noindent
By the connectivity of the group of local coordinate transformations, $\Omega_\C(\Phi)$
is independent of the choice of local coordinates in $(\C^2,0)$ and $(\C^3,0)$.

\begin{rem}\labelpar{re:stab}
The projection $ U(3) \to  V_2 ( \C^{3}) $ induces an isomorphism between
$ \pi_{3} ( V_2 ( \C^{3})) $ and $ \pi_{3} ( U(3)) = \pi_{3} ( U) $. Hence,
 if we choose a normal vector field $ N_{ \Phi } $ of $ \Phi $, then the map
\[ ( d \Phi , N_{ \Phi } )|_{S^{3}} : S^{3} \to GL(3, \C)
\]
represents $ \Omega_{ \C} ( \Phi ) $ in $ \pi_{3} ( GL(3, \C)) = \pi_{3} (U(3)) = \pi_{3} (U) $.
A canonical choice of $ N_{ \Phi } $ could be the complex conjugate of the cross product of the partial derivatives of $ \Phi $:
\[ N_{ \Phi } (s, t) = \overline{  \partial_s \Phi (s, t) \times \partial_t \Phi (s, t) } \mbox{ .} \]
\end{rem}

\begin{rem}
$ \pi_3 (U) \cong \Z $ and in \ref{ss:sign}  we
 identify them through a fixed isomorphism.
 In this way $ \Omega_{ \C} ( \Phi ) $ becomes  a well-defined integer without any sign--ambiguity.
\end{rem}

\section{Distinguished generators and sign conventions}

\subsection{}\labelpar{ss:comprel}
There is a natural map $ \tau: U(3) \to SO(6) $, which replaces any entry
$ M_{ij} =  a+ bi $ of a matrix $ M \in U(3) $ by the  real $ 2 \times 2 $--matrix
$\begin{pmatrix}a&-b\\b&a\end{pmatrix}$.
A map $ F: \C^3 \to \C^3 $ can be regarded as a map $ \tilde{F}: \R^6 \to \R^6 $: if we
denote by $ z_j = x_j + i y_j $  ($ j= 1, 2, 3 $) the coordinates of $ \C^3 $, then for the components of $ F $ and $ \tilde{F} $ one has
\[ F_j(z_1, z_2, z_3) = \tilde{F}_{2j-1} (x_1, y_1, x_2, y_2, x_3, y_3) + i \tilde{F}_{2j} (x_1, y_1, x_2, y_2, x_3, y_3). \]
 Then $ \tau ( d_{\C} F ) = d_{\R} \tilde{F} $ holds for the complex Jacobian of $F$ and the real Jacobian of $\tilde{F}$.

Let $ j: SO(5) \hookrightarrow SO(6) $ denote the inclusion. It is well--known (see e.g.
\cite{husemoller}) that
\begin{equation}\label{eq:pi3}
\pi_3 (j): \pi_3 (SO(5)) \to \pi_3 (SO(6)) \ \ \mbox{is an isomorphism}.
\end{equation}
\begin{lemma}\label{pr:hom}
The homomorphism $ \pi_3 ( \tau ): \pi_3 (U(3)) \to \pi_3(SO(6)) $ is an isomorphism too.
\end{lemma}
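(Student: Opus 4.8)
The plan is to use the well-known computation of the relevant homotopy groups together with the fact that $\pi_3$ of the classical groups is detected by integer-valued invariants, so that an endomorphism or homomorphism of copies of $\Z$ is an isomorphism as soon as it is surjective (equivalently, as soon as the image of a generator is a generator). Since $\pi_3(U(3))\cong\Z$, $\pi_3(SO(6))\cong\Z$, and, by \eqref{eq:pi3}, $\pi_3(SO(5))\cong\pi_3(SO(6))\cong\Z$, the statement $\pi_3(\tau)$ is an isomorphism is equivalent to $\pi_3(\tau)$ being surjective, i.e. to $\pi_3(\tau)$ sending a generator of $\pi_3(U(3))$ to $\pm 1$ times a generator of $\pi_3(SO(6))$.

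First I would reduce from $U(3)$ to $U(1)=S^1$ or rather to $SU(2)\subset U(3)$: the inclusion $SU(2)\hookrightarrow SU(3)\hookrightarrow U(3)$ induces an isomorphism on $\pi_3$ (standard: $\pi_3(SU(2))=\pi_3(S^3)=\Z$, and $SU(3)/SU(2)=S^5$ is $4$-connected, and $U(3)=SU(3)\times_{\Z/3} U(1)$ with $\pi_3(U(1))=0$). Thus a generator of $\pi_3(U(3))$ is represented by the standard generator $\sigma:S^3\to SU(2)$, the identity map under $SU(2)\cong S^3$. Composing with $\tau$ sends this to the image of $SU(2)$ in $SO(6)$ under the block-diagonal real form of $\tau$; concretely, $SU(2)$ lands in an $SO(4)\subset SO(6)$ block (the $4\times 4$ real realization of $2\times 2$ complex matrices), which factors through $SO(4)$ and indeed through the image of $SU(2)$ acting on $\C^2=\R^4$. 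So the real content is: the composite $S^3\cong SU(2)\to SO(4)\hookrightarrow SO(6)$ represents $\pm$ a generator of $\pi_3(SO(6))$.

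Next I would verify this last fact. Under $SU(2)\cong Sp(1)$, the map $SU(2)\to SO(4)$ is the double cover $Sp(1)\times Sp(1)\to SO(4)$ restricted to one factor (left multiplication of quaternions), which on $\pi_3$ is an isomorphism onto a $\Z$-summand (one of the two generators corresponding to the splitting $\pi_3(SO(4))\cong\Z\oplus\Z$). Then I use the standard fact that the inclusion $SO(4)\hookrightarrow SO(5)\hookrightarrow SO(6)$ sends this generator — the one coming from $Sp(1)$ left multiplication, equivalently the one detected by the Euler class / second Stiefel–Whitney data versus the one detected by $p_1$ — to a generator of $\pi_3(SO(5))\cong\pi_3(SO(6))\cong\Z$; this is classical (see Husemoller, or the analysis of the Hopf bundle $S^3\to S^7\to S^4$ and its associated $SO(4)$-structure). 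Alternatively, and more self-containedly, one can cite that $\pi_3(SO(6))\cong\pi_3(SU(3))\cong\Z$ under the complexification/realification maps being compatible, which already contains the claim for the summand we care about. Assembling these, $\pi_3(\tau)$ carries a generator to a generator, hence is an isomorphism.

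The main obstacle, and the one requiring care rather than cleverness, is the bookkeeping of generators and the factor-of-$2$'s and signs in the chain $\pi_3(U(3))\to\pi_3(SU(2))\to\pi_3(SO(4))\to\pi_3(SO(5))\to\pi_3(SO(6))$: one must check that no index $>1$ is introduced at any stage (e.g. that $\tau|_{SU(2)}$ is not the ``diagonal'' map $Sp(1)\to SO(4)$ that would give degree $2$ on $\pi_3$ of a summand). I expect to resolve this by writing $\tau$ explicitly on the $SU(2)$ block and identifying it with quaternionic left multiplication $q\mapsto L_q$ on $\H=\C^2$, whose class in $\pi_3(SO(4))$ is a standard generator of one $\Z$-factor, and then invoking the classical identification $\pi_3(SO(5))\xrightarrow{\sim}\pi_3(SO(6))$ together with the equally classical fact that this particular $SO(4)$-class maps to a generator of $\pi_3(SO(5))$. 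Since the paper only needs that $\pi_3(\tau)$ is an isomorphism (the precise generator is pinned down later in \ref{ss:sign}), surjectivity onto a generator is exactly enough.
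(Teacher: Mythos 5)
Your argument is correct, but it is not the route the paper takes for this lemma. The paper's stated proof is purely stable and conceptual: it passes to $\pi_3(U)\to\pi_3(O)$, invokes Bott periodicity in the form $O/U\simeq\Omega O$ to get $\pi_3(O/U)=\pi_4(O/U)=0$, and reads off the isomorphism from the homotopy exact sequence of the fibration $O\to O/U$ with fibre $U$ --- no generators are ever chosen. Your proof instead chases an explicit generator through $\pi_3(SU(2))\to\pi_3(U(3))\to\pi_3(SO(4))\to\pi_3(SO(6))$, identifying $\tau|_{SU(2)}$ with one-sided quaternion multiplication and using that $[L]$ (equivalently $[R]$) stabilizes to a generator of $\pi_3(SO(5))\cong\pi_3(SO)$. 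This is essentially the paper's \emph{second} proof, given later in \ref{ss:sign} as Proposition \ref{pr:homo} ($\tau\circ u=R$, $[R]=[L]-[\rho]=-[L]$); one small point of bookkeeping: with the paper's identification $\H\cong\C^2$ via $q=z+wj$, the complex-linear action is \emph{right} quaternion multiplication, not left (the paper's Remark after Proposition \ref{pr:homo} discusses exactly this convention issue), but since both $[L]$ and $[R]$ stabilize to $\pm$ a generator this does not affect the isomorphism claim. You correctly flag the one real danger --- mistaking the one-sided multiplication for the conjugation class $[\rho]$, which stabilizes to $2[L]$ and would break the argument --- and your proposed resolution (writing out $\tau$ on the $SU(2)$ block) is exactly what the paper does. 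What each approach buys: the Bott-periodicity proof is shorter and generator-free, but the paper ultimately needs the explicit generator computation anyway to fix the sign of the Smale invariant, so your version is the one that carries the extra information used downstream.
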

\begin{proof} First, we provide a more conceptual  proof, which does not identify
distinguished generators.
Both sides are in the stable range (see \cite{husemoller}), hence
 we can switch to the homomorphism $ \pi_3 (U) \to \pi_3(O) $ induced by the embedding $ \tau: U \hookrightarrow O $. By (a proof of)
 Bott periodicity,
 the factor $ O/U $ is homotopically equivalent to the loopspace $ \Omega O $ of $ O$, cf. \cite{bottper}.
 Hence $ \pi_i (O/U) = \pi_i ( \Omega O ) = \pi_{i+1} (O) = 0 $ for $i=3$ and 4.
  Then the  isomorphism follows from
 the homotopy exact sequence of the fibration $ O \to O/U $ with fibre $ U $.
\end{proof}
In \ref{ss:sign} we will give another, more computational  proof, where we will
be able to fix distinguished  generators for $ \pi_3 (U(3)) $ and $ \pi_3(SO(6)) $,
 and via these  generators  we
identify $ \pi_3 ( \tau ) $ with multiplication by $ -1 $.

\subsection{Conventions, identifications}\labelpar{ss:sign}
First, we identify $ \mathbb{H} $ and $ \R^4 $ and $ \C^2 $ in the obvious way:
 we identify the quaternion $ q = a + bi + cj + dk = z + wj  \in \mathbb{H} $ with $ (a, b, c, d) \in \R^4 $ and with the complex pair $ (z, w) \in \C^2 $, where $ z=a+bi $ and $ w= c+ di $. Also, we identify $ S^3 $ with the quaternions of unit length:
$ S^3 = \{ q= a + bi + cj + dk  \in \mathbb{H} \ | \ a^2 + b^2 + c^2 + d^2 = 1 \} $.

\begin{nota}
We define the following maps. Set
\[ u: S^3 \to U(2) \mbox{ , } u_q =
\left(
\begin{array}{cc}
z & - \bar{w} \\
w & \bar{z} \\
\end{array}
\right),
\]
where $ q= z + wj $. $ u_q $ is the (complex) matrix of the \emph{right}
(quaternionic) multiplication
with $ q $, that is, of the map $ \mathbb{H} \to \mathbb{H} $, $ p \mapsto pq $.
Note that the left multiplication by $ q $ is not a complex unitary transformation, in general.
Next, set
\[ L: S^3 \to SO(4) \mbox{ , } L_q =
\left(
\begin{array}{cccc}
a & -b & -c & -d \\
b & a & -d & c \\
c & d & a & -b \\
d & -c & b & a \\
\end{array}
\right),
\]
where $ q= a + bi + cj + dk $.
$ L_q $ is the (real) matrix of the \emph{left} multiplication with $ q $ (i.e., of
the map $ \mathbb{H} \to \mathbb{H} $, $ p \mapsto qp $).

Let $ R:  S^3 \to SO(4) $ be the map which assigns for a $q \in S^3 $ the (real) matrix $ R_q $ of the righ multiplication with $ q$ (i.e., of the map $ \mathbb{H} \to \mathbb{H} $, $ p \mapsto pq $).

Let $ \rho:  S^3 \to SO(4) $ be the map which assigns for a $q \in S^3 $ the (real) matrix $ \rho_q $ of the conjugation with $ q$ (i.e., of the map $ \mathbb{H} \to \mathbb{H} $, $ p \mapsto qpq^{-1} $).

We use the same notation for the compositions of these maps with the inclusions $ SO(4) \hookrightarrow SO $ and $ U(2) \hookrightarrow U $. Note that these inclusions commute with $ \tau $.

\end{nota}

\begin{prop}\cite[section 7, subsection 12]{husemoller}

 (a) $ \pi_3 (U(2)) = \pi_3 (U) = \Z \langle [u] \rangle $.

 (b) $ \pi_3 (SO(4)) = \Z \langle [L] \rangle \oplus \Z \langle [\rho] \rangle $.

 (c) $ \pi_3 (SO) = \Z \langle [L] \rangle $ and $ [ \rho] = 2 [L] $ in $ \pi_3 (SO) $.
 \end{prop}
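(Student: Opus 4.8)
The plan is to establish the three statements separately, each time reducing to a low-dimensional quaternionic model and the standard stability fibrations for the classical groups; throughout I would use Remark~\ref{r:lie}, that on $\pi_3$ of a Lie group the pointwise product of two maps corresponds to their sum. For part (a), first I would note that $\det u_q = z\bar z + w\bar w = |q|^2 = 1$, so $u$ is valued in $SU(2)$ and is in fact the classical diffeomorphism from $S^3$ (unit quaternions) onto $SU(2)$; hence $[u]$ generates $\pi_3(SU(2)) = \pi_3(S^3) = \Z$. Then from the fibrations $SU(2)\hookrightarrow U(2)\to S^1$ and $U(n-1)\hookrightarrow U(n)\to S^{2n-1}$, together with $\pi_3(S^1)=\pi_4(S^1)=0$ and $\pi_3(S^{2n-1})=\pi_4(S^{2n-1})=0$ for $n\ge 2$, the inclusions induce isomorphisms $\pi_3(SU(2))\cong\pi_3(U(2))\cong\pi_3(U(3))\cong\cdots\cong\pi_3(U)$, so $[u]$ generates each of them.

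For part (b), identify $\mathbb H=\R^4$ as in the paper and consider the evaluation $\pi\colon SO(4)\to S^3$, $A\mapsto A(1)$, a fibre bundle with fibre the stabilizer $\mathrm{Stab}(1)=SO(3)$ of $1\in\mathbb H$. Since $L_q(1)=q$, the map $L$ is a section of $\pi$, so its homotopy exact sequence collapses to a split short exact sequence $0\to\pi_3(SO(3))\to\pi_3(SO(4))\to\pi_3(S^3)\to 0$ (no obstruction: $\pi_4(S^3)$ is finite while $\pi_3(SO(3))\cong\Z$ is torsion-free, and $\pi_2(SO(3))=0$), whence $\pi_3(SO(4))\cong\pi_3(SO(3))\oplus\pi_3(S^3)$ with the second summand generated by the section class $[L]$. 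Next, because $\rho_q(1)=q1q^{-1}=1$, the map $\rho$ actually takes values in $SO(3)=\mathrm{Stab}(1)$, where it is precisely the double cover $Spin(3)=S^3\to SO(3)$ (conjugation action on $\mathbb H=\R\oplus\mathrm{Im}\,\mathbb H$); since a covering induces an isomorphism on $\pi_n$ for $n\ge 2$, $[\rho]$ generates $\pi_3(SO(3))\cong\Z$. Hence $\pi_3(SO(4))=\Z\langle[L]\rangle\oplus\Z\langle[\rho]\rangle$.

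For part (c), the fibrations $SO(n)\hookrightarrow SO(n+1)\to S^n$ with $\pi_3(S^n)=\pi_4(S^n)=0$ for $n\ge 5$ make $\pi_3(SO(5))\to\pi_3(SO(6))\to\cdots\to\pi_3(SO)$ all isomorphisms, so it remains to compute $\pi_3(SO(5))$ and the images of $[L],[\rho]$ there. I would pass to the universal covers (harmless on $\pi_3$) and use the exceptional isomorphisms $Spin(4)\cong Sp(1)\times Sp(1)$ and $Spin(5)\cong Sp(2)$, under which $Spin(4)\hookrightarrow Spin(5)$ becomes the block-diagonal inclusion $Sp(1)\times Sp(1)\hookrightarrow Sp(2)$. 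Each block $Sp(1)\hookrightarrow Sp(2)$ induces the stabilization isomorphism $\pi_3(Sp(1))=\Z\xrightarrow{\sim}\pi_3(Sp(2))$ (the first block is literally the stabilization; the second is its conjugate by the permutation matrix, which lies in the connected group $Sp(2)$ and so acts trivially on $\pi_3$). Under $Spin(4)\to SO(4)$ the element $(q_1,q_2)$ acts on $\mathbb H$ by $p\mapsto q_1p\bar q_2$, so $L$ lifts to $q\mapsto(q,1)$ (the first block) and $\rho$ lifts to $q\mapsto(q,q)$ (the diagonal). As $(q,q)=(q,1)\cdot(1,q)$ in $Sp(2)$, Remark~\ref{r:lie} shows the image of $[\rho]$ is the sum of the images of the two blocks; writing $g$ for the common generator image we get $[L]\mapsto g$ and $[\rho]\mapsto 2g$. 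Therefore $\pi_3(SO(5))=\pi_3(SO)=\Z\langle[L]\rangle$ and $[\rho]=2[L]$ in $\pi_3(SO)$.

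I expect parts (a) and (b) to be pure bookkeeping with fibrations and the quaternionic picture of $S^3$; the one point with genuine content is the factor $2$ in (c), i.e.\ locating $[\rho]$ exactly inside $\pi_3(SO(5))\cong\Z$. The route above trivializes this via $Spin(5)\cong Sp(2)$ together with the elementary fact that the diagonal of $Sp(1)\times Sp(1)$ is the pointwise product of the two block inclusions; an alternative would be to compare the first Pontryagin numbers of the rank-$5$ bundles over $S^4$ classified by $[L]$ and $[\rho]$ (using that $p_1$ is injective on the free group $\pi_3(SO(5))$ by rational Hurewicz, and that $p_1$ is invariant under conjugation of the structure group), but that demands more characteristic-class input. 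The fussiest part of writing this out in full will be verifying that all these identifications are compatible with the explicit generators fixed in \ref{ss:sign}, so that no spurious sign is introduced.
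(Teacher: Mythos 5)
Your argument is correct, but note that the paper does not prove this proposition at all: it is quoted verbatim from Husem\"oller \cite[section 7, subsection 12]{husemoller}, with the generators $[u]$, $[L]$, $[\rho]$ matching his quaternionic maps. What you have written is therefore a self-contained substitute for that citation rather than a variant of an argument in the paper. Parts (a) and (b) follow the standard route (the diffeomorphism $S^3\cong SU(2)$ plus the stabilization fibrations for $U(n)$; the evaluation fibration $SO(4)\to S^3$ split by the section $L$, with $\rho$ landing in the fibre $SO(3)$ as the double cover), and these are essentially what one finds in the reference. The one point of real content, the relation $[\rho]=2[L]$ after stabilizing to $\pi_3(SO(5))$, you handle via $Spin(4)\cong Sp(1)\times Sp(1)\hookrightarrow Sp(2)\cong Spin(5)$, writing the lift of $\rho$ as the pointwise product of the two block inclusions and invoking Remark~\ref{r:lie}; this is clean and avoids the characteristic-class computation ($p_1$ of the associated rank-$5$ bundles over $S^4$) that one would otherwise import. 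All the individual steps check out: $L$ lifts to $q\mapsto(q,1)$ and $\rho$ to $q\mapsto(q,q)$ under $(q_1,q_2)\colon p\mapsto q_1p\bar q_2$, the two blocks induce the same stabilization isomorphism on $\pi_3$ since they are conjugate inside the connected group $Sp(2)$, and the covering $Spin(5)\to SO(5)$ is harmless on $\pi_3$. Your closing worry about compatibility with the conventions of \ref{ss:sign} is unfounded: the generators $[u]$ and $[L]$ used there are exactly the explicit quaternionic maps your proof manipulates, so no extra sign bookkeeping is needed.
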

 In the sequel, {\bf using these base choices $[u]$ and $[L]$ we identify
 the groups $\pi_3(U)$ and $\pi_3(SO)$ with $\Z$.}
 Now we can state the explicit version of Proposition~\ref{pr:hom}.
\begin{prop}\label{pr:homo}
$ \pi_3 (\tau) ([u]) = - [L] \in \pi_3 (SO) $ holds for $ [u] \in \pi_3 (U) $.
\end{prop}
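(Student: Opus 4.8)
The plan is to prove the stronger pointwise identity $\tau\circ u=R$ as maps $S^3\to SO(4)$, and then to compute $[R]\in\pi_3(SO)$ using only the relation $[\rho]=2[L]$ already recorded in \ref{ss:sign}. \emph{Step 1 ($\tau(u_q)=R_q$).} Since $U(2)\hookrightarrow U(3)$ commutes with $\tau$ and carries $U(2)$ into $SO(4)\subset SO(6)$, it is enough to look at $\tau$ on $U(2)$. By construction $u_q$ is the \emph{complex} matrix of the map $\mathbb{H}\to\mathbb{H}$, $p\mapsto pq$; this map is $\C$-linear because right multiplication commutes with left multiplication by $i$, and left multiplication by $i$ is precisely the complex structure realizing $\mathbb{H}=\C^2$ via $q=z+wj\mapsto(z,w)$. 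The realification $\C=\R^2$, $a+bi\mapsto(a,b)$, used to define $\tau$ (sending an entry $a+bi$ to $\left(\begin{smallmatrix}a&-b\\ b&a\end{smallmatrix}\right)$) identifies $\C^2=\R^4$ in exactly the same way $\mathbb{H}=\R^4$ is identified in \ref{ss:sign}, namely $(z,w)=(a+bi,\,c+di)\mapsto(a,b,c,d)$. Hence $\tau(u_q)$ is, by the very definition of $\tau$, the \emph{real} matrix of the same map $p\mapsto pq$ in the basis $1,i,j,k$, i.e. $\tau(u_q)=R_q$. (A one-line check on $q=i$ shows both sides equal $\left(\begin{smallmatrix}0&-1&0&0\\ 1&0&0&0\\ 0&0&0&1\\ 0&0&-1&0\end{smallmatrix}\right)$.) Therefore $\pi_3(\tau)([u])=[\tau\circ u]=[R]\in\pi_3(SO)$, and it remains to identify $[R]$.

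\emph{Step 2 ($[R]=-[L]$).} Conjugation by $q$ is right multiplication by $q^{-1}$ followed by left multiplication by $q$, so as matrices $\rho_q=L_q\,R_{q^{-1}}=L_q\,(R_q)^{-1}$, using $R_{q^{-1}}=(R_q)^{-1}$ for $q\in S^3$. Thus the map $q\mapsto\rho_q$ is the pointwise product, in $SO(4)$, of $q\mapsto L_q$ and $q\mapsto (R_q)^{-1}$. By Remark~\ref{r:lie}, in $\pi_3$ of a Lie group the class of a pointwise product of maps $S^3\to G$ is the sum of the classes, and the class of the pointwise inverse is the negative; hence $[\rho]=[L]-[R]$ in $\pi_3(SO(4))$, and a fortiori in $\pi_3(SO)$ after composing with $SO(4)\hookrightarrow SO$. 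Combining with $[\rho]=2[L]$ in $\pi_3(SO)$ (part (c) of the Proposition in \ref{ss:sign}) gives $2[L]=[L]-[R]$, i.e. $[R]=-[L]$. Together with Step 1 this is the assertion $\pi_3(\tau)([u])=-[L]$; in particular it re-proves Lemma~\ref{pr:hom} with explicit generators.

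\emph{Cross-check and the main obstacle.} As an independent check on the sign one may instead use the double cover $\Psi\colon S^3\times S^3\to SO(4)$, $(q_1,q_2)\mapsto(p\mapsto q_1p\bar q_2)$, which induces an isomorphism $\pi_3(S^3)\oplus\pi_3(S^3)\xrightarrow{\ \sim\ }\pi_3(SO(4))$: here $[L]=\Psi_*(1,0)$, $[\rho]=\Psi_*(1,1)$, and $R_q=\Psi(1,\bar q)$, so, since $q\mapsto\bar q$ has degree $-1$ on $S^3$, one gets $[R]=\Psi_*(0,-1)=\Psi_*(1,0)-\Psi_*(1,1)=[L]-[\rho]$, in agreement with Step 2, and again $[\rho]=2[L]$ finishes it. The argument is conceptually short; the only delicate point is bookkeeping — one must be scrupulous that the identification $\C^2=\mathbb{H}=\R^4$ makes $\tau(u_q)$ \emph{literally} equal to $R_q$ (and not its transpose or a coordinate-permuted variant), and that the composition order in $\rho_q=L_q(R_q)^{-1}$ is the right one. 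The approach in Steps 1--2 is arranged precisely so that no separate computation of the degree of $q\mapsto\bar q$ is required and the sole external input is the already-established relation $[\rho]=2[L]$.
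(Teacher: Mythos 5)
Your proof is correct and is essentially the paper's own argument: the paper likewise verifies $\tau\circ u=R$ and the relation $\rho\, R=L$ (equivalently your $\rho_q=L_q(R_q)^{-1}$) and then concludes $[R]=[L]-[\rho]=-[L]$ using $[\rho]=2[L]$. Your added bookkeeping in Step 1 and the $S^3\times S^3\to SO(4)$ cross-check are just more detailed versions of the same computation.
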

 \begin{proof}
From definitions $ \tau \circ u = R $ and $ \rho R= L $, thus $ \pi_3 (\tau) ([ u ] ) = [ R]  = [L]-[ \rho ] = -[L] $.
\end{proof}

\begin{remark}
 Let $ p: U(2) \to S^3 $ be the projection (choosing the first or the second column of
 the matrix). Then $ [u] \in \pi_3 (U(2)) $ is the unique generator for which
 $ \deg (u \circ p )=1 $.
\end{remark}

\begin{prop}\label{prop:u}
 Let $ \Phi : \C^2 \to \C^3 $ be the cross cap, i.e. $ \Phi (s, t) = (s^2, st, t) $. Then $ \Omega_{\C} ( \Phi ) = [u] $.
\end{prop}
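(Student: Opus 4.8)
The plan is to compute the map $d\Phi|_{S^3}\colon S^3\to V_2(\C^3)$ explicitly for the cross cap $\Phi(s,t)=(s^2,st,t)$ and identify its class with the generator $[u]\in\pi_3(U)$ fixed in \ref{ss:sign}. First I would write down the complex Jacobian: the columns of $d\Phi_{(s,t)}$ are $\partial_s\Phi=(2s,t,0)$ and $\partial_t\Phi=(0,s,1)$. These two vectors are independent for all $(s,t)\in\C^2$ (the germ is indeed immersive off the origin, and in fact everywhere here), so they give a genuine element of $V_2(\C^3)$, whose homotopy class is what we must compute. By Remark~\ref{re:stab} it is equivalent, and more convenient, to append the canonical normal vector field $N_\Phi=\overline{\partial_s\Phi\times\partial_t\Phi}$ and work with the resulting map $S^3\to GL(3,\C)$, then pass to $U(3)$ and to $\pi_3(U)=\Z\langle[u]\rangle$.

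The key simplification is that $\pi_3(V_2(\C^3))\cong\pi_3(U)\cong\pi_3(U(2))$ via stabilization, and a class in $\pi_3(U(2))=\Z\langle[u]\rangle$ is detected by a single integer; by the Remark after Proposition~\ref{pr:homo}, if $p\colon U(2)\to S^3$ is a column projection, then the generator $[u]$ is characterized by $\deg(u\circ p)=1$. So the strategy is: produce from $d\Phi|_{S^3}$ (possibly after a homotopy within $V_2(\C^3)$, e.g. Gram–Schmidt/normalization, which does not change the class) a map to $U(2)$ (or directly extract a map $S^3\to S^3$ by picking one column and normalizing), and compute its degree, which should come out to $+1$. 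Concretely, restricting to $S^3=\{|s|^2+|t|^2=1\}\subset\C^2$, the vector $\partial_t\Phi=(0,s,1)$ after projection to a suitable $2$-dimensional complex subspace and normalization should recover, up to homotopy, the identity $S^3\to S^3$; alternatively one recognizes $d\Phi|_{S^3}$ as homotopic to the standard inclusion-type frame whose class is exactly $[u]$. One must be careful with the dependence on local coordinates: the statement is coordinate-free by the remark following the definition of $\Omega_\C$, so we are free to choose whatever coordinates make the degree computation cleanest.

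I expect the main obstacle to be bookkeeping of signs and orientations so that the answer is exactly $+[u]$ and not $-[u]$: the degree-$1$ normalization of $[u]$, the identification $\mathbb H\cong\C^2\cong\R^4$ fixed in \ref{ss:sign}, the ordering of the columns $(\partial_s\Phi,\partial_t\Phi)$, and the choice of the normal vector $N_\Phi$ all feed into the sign, and since the whole point of this section is to pin down signs, this cannot be waved away. A clean way to control this is to first contract $d\Phi|_{S^3}$ through $V_2(\C^3)$ to a frame lying in a fixed $\C^2\subset\C^3$ (using that the third coordinate of $\partial_t\Phi$ is the constant $1$, which lets one homotope the frame off the last axis in a controlled way), thereby landing in $U(2)$ up to homotopy, and then directly match the resulting $U(2)$-valued map with $u_q$ by tracking a single column under the projection $p$ and computing its degree. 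Verifying that this degree is $+1$ (rather than $-1$), with the conventions of \ref{ss:sign}, is the crux.
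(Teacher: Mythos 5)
Your plan is correct and follows essentially the same route as the paper's proof: append the canonical normal field $N_\Phi$ to pass to $GL(3,\C)$, use the constant $1$ in $\partial_t\Phi$ to homotope the frame into $GL(2,\C)$, then project onto a column and compute that the resulting map $S^3\to\C^2\setminus\{0\}$ (in the paper, $(s,t)\mapsto(2s,t)$ from the first column) has degree $+1$, matching the characterization of $[u]$ via $\deg(p\circ u)=1$. You correctly identify the sign bookkeeping as the crux; carrying out the explicit homotopy and the degree count as you describe completes the argument exactly as in the paper.
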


\begin{proof}
The map $ d \Phi|_{S^3}: S^3 \to V_2 ( \C^3 ) $ represents $ \Omega_{\C} ( \Phi ) $.
We should compose this with  $ V_2 (\C^3 ) \to U(3) $, then
with the inverse of the inclusion $ U(2) \to U(3) $,  and finally with the projection $ U(2) \to S^3 $;
 and then
calculate the degree of the resulting map $ S^3 \to S^3 $. In fact,
along these compositions
we will use (the homotopically equivalent groups)
$ GL(2, \C ) $ and $ GL(3, \C ) $ instead of $ U(2)  $ and $ U(3) $.
Therefore, we will arrive in
$ \C^2 \setminus \{ 0 \} $ instead of $ S^3 $.
\[ d \Phi|_{S^3} : S^3 \to V_2 (\C^3) \mbox{ , }
(s, t) \mapsto
 \left( \begin{array}{cc}
 2s & 0 \\
 t & s \\
 0 & 1 \\
\end{array} \right).
\]
The first composition gives the map
\[
 S^3 \to GL(3, \C ) \mbox{ , }
 (s, t) \mapsto
 \left( \begin{array}{ccc}
 2s & 0 & N_1 \\
 t & s & N_2 \\
 0 & 1 & N_3 \\
\end{array} \right),
\]
where $ N_1 = \bar{t} $, $ N_2 = -2 \bar{s} $ and $N_3 = 2 \bar{s}^2 $ are the coordinates of the normal vector $ N_{\Phi} $ (see Remark~\ref{re:stab}).
This is modified by the homotopy
\[
 S^3 \times [0,1] \to GL(3, \C ) \mbox{ , }
 (s, t, h) \mapsto
 \left( \begin{array}{ccc}
 2s & 0 & N_1 \\
 t & h s & N_2 \\
 0 & 1 & h N_3 \\
\end{array} \right),
\]
which maps $ (s, t, 0) $ into $ GL(2, \C ) \subset GL(3, \C) $. [Note that the determinant
 is $ |t|^2 + 4|s|^2 + 4h^2 |s|^4 \neq 0 $, thus the image is indeed in $ GL(3, \C) $.]
Hence, we obtain  the map
\[
 S^3 \to GL(2, \C ) \mbox{ , }
 (s, t) \mapsto
 \left( \begin{array}{cc}
 2s & N_1 \\
 t & N_2 \\
\end{array} \right),
\]
which composes with the projection (first column) provides
$ S^3 \to \C^2 \setminus \{0 \}$, $(s, t) \mapsto (2s, t)$.
After a normalisation, the degree of the resulting map is $ 1 $.
\end{proof}

\begin{rem}
The proof of Proposition~\ref{prop:u} works for all germs of the form
 \begin{equation}\labelpar{eq:rank}
  \Phi (s, t) = ( g_1 (s, t), g_2 (s, t), t )
 \end{equation}
and implies that
 \[
  \Omega_{\C} ( \Phi ) = \deg \left( S^3 \to S^3 \mbox{ , } (s, t) \mapsto \frac{(\partial_s g_1 (s, t), \partial_s g_2 (s, t))}{|(\partial_s g_1(s, t), \partial_s g_2 (s, t))|} \right).
 \]
 This degree agrees with the intersection multiplicity in $(\C^2,0)$
 of $ \partial_s g_1 $ and $ \partial_s g_2 $, i.e.
 \begin{equation}\labelpar{eq:other}
  \Omega_{\C} ( \Phi ) = \dim_{\C}\,  \frac{ \mathcal{O}_{\C^2, 0 }}{ (
  \partial_s g_1, \partial_s g_2 ) }.
 \end{equation}
 This also equals  $ C( \Phi) $ by Theorem~\ref{th:C}. This proves
 $\Omega_{\C} ( \Phi )= C(\Phi)$ for maps of corank 1.
  (All germs which satisfy $ {\rm rank} (d \Phi_{0} )= 1 $ are right--left equivalent with germs of type  (\ref{eq:rank})).
  This identity will be proved in the general case in section~\ref{sec:5}.

 Conversely, the identity $\Omega_{\C} ( \Phi )= C(\Phi)$ is proved in section~\ref{sec:5}
 independently of Theorem~\ref{th:C}, therefore (\ref{eq:other}) together with Theorem~\ref{th:Comp}
  give a new proof for Mond's Theorem~\ref{th:C}
  in the case of germs which satisfy $ {\rm rank} (d \Phi_{0} )= 1 $.
\end{rem}

\begin{rem}
The conventions we use are not universal. For example, Kirby and Melvin in \cite{KirbyMelvin}
 have chosen the same generators of $  \pi_3 (U) $ and $  \pi_3 (SO)$ (these are $ [u] $ and $ [L]$ with our notations), but they identified $ \R^4 $ and $ \C^2 $ differently than us. Namely, they identified the quaternion $ q= a+bi + cj + dk = z + jw \in \mathbb{H} $ with $ (a, b, c, d) \in \R^4 $ and the complex pair $ (z,w) \in \C^2 $, where $ z=a+bi $ and $ w= c-di $. With that identification $ u_q $ becomes the (complex) matrix of the quaternionic \emph{left} multiplication with $ q $. In that identification $ \pi_3 ( \tau )[u] $ would be equal to $ -[R] $, since that is the homotopy class of the map $ S^3 \to SO(4) $ given by the composition $ \tau \circ u \circ \kappa  $, where $ \kappa $ is the reflection $ \kappa (z, w) = (z, \bar{w}) $.
\end{rem}

\section{The identity $ \Omega_{ \C } ( \Phi ) = C ( \Phi ) $.}\label{sec:5}

\subsection{} Next we identify the complex Smale invariant with the number of cross caps.
\begin{thm}\label{th:Comp}
$ \Omega_{ \C } ( \Phi ) = C ( \Phi ) $.
\end{thm}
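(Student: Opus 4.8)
The plan is to prove the identity $\Omega_{\C}(\Phi)=C(\Phi)$ by a deformation/additivity argument, reducing the general case to the corank-$1$ situation already treated after Proposition~\ref{prop:u}. First I would observe that both sides are deformation invariants in the appropriate sense: $C(\Phi)$ is constant under generic holomorphic deformations $\Phi_\lambda$ by \cite{Mond1,Mond2} (see \ref{ss:Wh}), and $\Omega_{\C}(\Phi)$ only depends on the homotopy class of $d\Phi|_{\mathfrak S^3}:\mathfrak S^3\to V_2(\C^3)$, which is locally constant in $\lambda$ once we fix the radius $\epsilon$ (the relevant boundary sphere $\Phi_\lambda^{-1}(S^5_\epsilon)$ moves by an isotopy, cf.\ Lemma~\ref{cor:epsilon}). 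So it suffices to prove the equality for a single generic representative $\Phi_\lambda$, whose only singularities are cross caps, finitely many of them, located at points $p_1,\dots,p_{C(\Phi)}$ inside the ball.

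The core of the argument is then a \emph{localization}: I would show that $\Omega_{\C}(\Phi_\lambda)$ equals the sum of local contributions, one from each cross cap, each contribution being $\Omega_{\C}$ of the local cross cap germ $(s,t)\mapsto(s^2,st,t)$, which is $[u]=1$ by Proposition~\ref{prop:u}. Concretely, pick small disjoint balls $B_i$ around the $p_i$ on which $\Phi_\lambda$ is right--left equivalent to the standard cross cap. Away from $\bigcup_i B_i$ the map $\Phi_\lambda$ is an immersion, so $d\Phi_\lambda$ takes values in $V_2(\C^3)$ on all of $\mathfrak B_\epsilon\setminus\bigcup_i \mathrm{int}\,B_i$. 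Hence $[d\Phi_\lambda|_{\partial \mathfrak B_\epsilon}]$ and $\sum_i[d\Phi_\lambda|_{\partial B_i}]$ represent the same class in $\pi_3(V_2(\C^3))$: the $3$-cycle $\partial\mathfrak B_\epsilon - \sum_i\partial B_i$ bounds in the region where the map into $V_2(\C^3)$ is defined. (One has to be slightly careful that $\partial \mathfrak B_\epsilon$ is the $\Phi_\lambda$-preimage of $S^5_\epsilon$, not a metric sphere, but by Lemma~\ref{cor:epsilon} this is isotopic to a metric $S^3$ and harmless; alternatively work throughout with the non-metric ball $\mathfrak B_\epsilon$.) Each local contribution $[d\Phi_\lambda|_{\partial B_i}]$, by the coordinate-invariance of $\Omega_{\C}$ noted after the definition in section~\ref{s:co}, equals $\Omega_{\C}$ of the standard cross cap, i.e.\ $1$. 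Summing, $\Omega_{\C}(\Phi_\lambda)=C(\Phi)$, and by deformation invariance $\Omega_{\C}(\Phi)=\Omega_{\C}(\Phi_\lambda)=C(\Phi)$.

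I expect the main obstacle to be making the localization rigorous, i.e.\ the claim that the class in $\pi_3(V_2(\C^3))$ carried by the outer boundary is the sum of the classes carried by the small boundary spheres around the cross caps. This is the standard ``excision/additivity of the obstruction'' principle, but writing it cleanly requires identifying $\pi_3(V_2(\C^3))$ with an obstruction/degree group and checking orientations and the compatibility of the chosen generators (the $[u]$ of Proposition~\ref{prop:u}) under the local coordinate changes that bring $\Phi_\lambda$ near $p_i$ into cross-cap form; since those coordinate changes lie in a connected group, the class is preserved, but the bookkeeping of signs is where care is needed. A secondary technical point is the passage from the generic deformation, where the singular locus is $\{p_1,\dots,p_{C(\Phi)}\}$, back to $\Phi$, which relies on choosing $\epsilon$ first and $\lambda$ small afterwards so that all cross caps stay inside $\mathfrak B_\epsilon$ and $\Phi_\lambda$ stays immersive near $\partial\mathfrak B_\epsilon$; this is exactly the kind of statement guaranteed by the finiteness of $C(\Phi)$ and the conservation-of-number results of Mond cited in \ref{ss:Wh}. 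If one prefers to avoid obstruction theory entirely, an alternative route is to use the corank-$1$ formula \eqref{eq:other} together with an induction on corank via a generic one-parameter deformation that breaks a higher-corank point into corank-$1$ points, but the deformation-plus-localization argument above seems the most transparent.
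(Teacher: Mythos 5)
Your proof is correct and is essentially the paper's argument: both replace $\Phi$ by a generic perturbation $\Phi_\lambda$ whose only singular points are the $C(\Phi)$ cross caps and show that each one contributes the generator $[u]=+1$ of $\pi_3(V_2(\C^3))$ computed in Proposition~\ref{prop:u}, all local signs being positive because everything in sight is complex. The only packaging difference is that the localization step you single out as the main obstacle is handled in the paper by intersection theory in the target rather than excision in the source: one checks that the linking number with the determinantal variety $\mathcal{D}=\{M:\ \rank M<2\}\subset \Hom(\C^2,\C^3)$ defines a homomorphism $\pi_3(V_2(\C^3))\to\Z$ agreeing with $\Omega_\C$ on the cross--cap generator, and then the extension $d\Phi_\lambda|_{B^4}$ meets $\mathcal{D}$ transversally exactly at the cross caps.
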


\begin{proof} Consider the following diagram:
\[
\begin{array}{cccc}
d \Phi: & \C^2 & \longrightarrow & Hom ( \C^2 , \C^3 ) \\
 & \cup & & \cup \\
d \Phi |_{ \C^2 \setminus \{ 0 \}} : & \C^2 \setminus \{ 0 \} & \longrightarrow &  Hom ( \C^2 , \C^3 ) \setminus \mathcal{D}= V_2 ( \C^3 )\\
\end{array} \mbox{ ,}
\]
where $ \mathcal{D} = \{ M \in Hom ( \C^2 , \C^3 ) \ | \ {\rm rank} (M) < 2 \} $.
$ \mathcal{D} $ is an irreducible algebraic variety of complex codimension $2$, its Zariski open set
$ \mathcal{D}^1 = \{ M \in {\mathcal D} \ | \ {\rm rank} (M) =1 \} $ is smooth.

First we prove that $ \Omega_{ \C } ( \Phi ) $ is equal to the linking number of
 $ d \Phi |_{S^3} $ and $ \mathcal{D} $ in $  Hom ( \C^2 , \C^3 ) $.
This is  defined as follows.
If  $ g: S^3 \to Hom ( \C^2 , \C^3 ) \setminus \mathcal{D} $ is a smooth map, and $\tilde{g}$
is a smooth extension defined on the ball such that $\tilde{g}|_{S^3} = g $ and
$ \tilde{g} $ intersects $ \mathcal{D} $ transversally along $ \mathcal{D}^1 $, then
the linking number of $ g $ and $ \mathcal{D} $
 is the algebraic number of the intersection points of $ \tilde{g} $ and $ \mathcal{D} $.
By standard argument it is a homotopy invariant of maps
$S^3 \to Hom ( \C^2 , \C^3 ) \setminus \mathcal{D} $.

The linking number gives a group homomorphism
$\lk: \pi_3 (V_2 ( \C^3 )) \to \Z $.
Next lemma shows  that this homomorphism is surjective.
\begin{lem}\label{lem:cr} Let $ \Phi(s, t) = (s^2, st, t) $ be the cross cap. If $ g= d \Phi|_{S^3} $ and $ \tilde{g} = d \Phi|_{B^4} $, then
$ \tilde{g} (0) \in \mathcal{D} $ is the only intersection point and the intersection is transversal at that point.
\end{lem}
\begin{proof} This is a straightforward local computation left to the reader.
The transversality follows also from the conceptual fact that the cross cap is a stable map.
\end{proof}

The sign of the intersection multiplicity at the intersection point of two
complex submanifolds is always positive. For $ g $ described in \ref{lem:cr}
 the linking number of $ g(S^3) $ and $ \mathcal{D} $ is $1$.
 This shows not only that the
 homomorphism given by the linking number is surjective (hence an isomorphism too), but also that
 this isomorphism agrees with the chosen one in \ref{ss:sign}.
  This follows from the fact that the complex Smale invariant of the cross cap is
   exactly the chosen generator, see Proposition~\ref{prop:u}. Hence, the homomorphisms  $\Omega_\C$
   and $\lk$ coincide.

 Next, we show that
 $ \lk_{ Hom ( \C^2 , \C^3 ) } ( d \Phi |_{S^3}(S^3), \mathcal{D}) = C ( \Phi ) $.
 Take a generic perturbation $ \Phi_{ \epsilon} $ of $ \Phi $. $ d \Phi_{ \epsilon} |_{S^3} $ is homotopic to $  d \Phi |_{S^3} $, hence  their linking numbers are the same. $ \Phi_{ \epsilon} $ has only cross cap singularities, their number is $ C ( \Phi ) $. This means that $ d \Phi_{ \epsilon} |_{B^4} $ intersects transversally $ \mathcal{D} $ in $ C ( \Phi ) $ points.
 Intersection of  complex manifolds provides positive signs.
\end{proof}

\begin{cor}
$ \Omega_{ \C } ( \Phi ) \geq 0 $.
\end{cor}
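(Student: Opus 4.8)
The plan is to deduce this immediately from Theorem~\ref{th:Comp}. That theorem gives $\Omega_{\C}(\Phi) = C(\Phi)$, and $C(\Phi)$ is by construction a non--negative integer: it is the number of cross cap points appearing in a generic holomorphic deformation $\Phi_\lambda$ of $\Phi$, and equivalently, by Theorem~\ref{th:C}, the complex dimension $\dim_\C(\mathcal{O}_{\C^2,0}/J)$. In either description there is nothing negative about it, so $\Omega_{\C}(\Phi)\geq 0$ follows at once.

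If one prefers not to invoke $C(\Phi)$ as an externally defined count, the sign can be read off directly from the proof of Theorem~\ref{th:Comp}. There $\Omega_{\C}(\Phi)$ was identified with the linking number $\lk_{\Hom(\C^2,\C^3)}\big(d\Phi|_{S^3}(S^3),\mathcal{D}\big)$, computed (after replacing $\Phi$ by a generic perturbation $\Phi_\epsilon$, which does not change the homotopy class of $d\Phi|_{S^3}$) as the algebraic number of intersection points of $d\Phi_\epsilon|_{B^4}$ with $\mathcal{D}$ along its smooth locus $\mathcal{D}^1$. Each such point is a transverse intersection of complex submanifolds, hence contributes with sign $+1$; therefore the signed count is a non--negative integer.

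There is no genuine obstacle here — the corollary is an immediate consequence of the positivity of complex intersection indices already exploited in the proof of Theorem~\ref{th:Comp}. The only point worth keeping in mind is that the identification of the relevant isomorphism $\lk\colon \pi_3(V_2(\C^3))\to\Z$ with the one fixed in \ref{ss:sign} (so that ``$\geq 0$'' is expressed with respect to the chosen generator $[u]$) was already checked in that proof, via Proposition~\ref{prop:u}.
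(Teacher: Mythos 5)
Your proposal is correct and matches the paper's (implicit) argument: the corollary is stated without proof precisely because it follows immediately from Theorem~\ref{th:Comp} together with the non--negativity of $C(\Phi)$, and your alternative reading via the positivity of transverse complex intersections is exactly the mechanism already used in that theorem's proof. Nothing further is needed.
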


\section{The proof of Theorems~\ref{th:main} and \ref{th:embintro}}\labelpar{s:proof}
\subsection{}\label{ss:6.1}

 Theorem \ref{th:main} follows from Theorem \ref{th:Comp} and the next identity.
\begin{prop}
$ \pi_3 ( \tau ) ( \Omega_{ \C } ( \Phi ) ) = \pi_3 (j) (\Omega(f)) $.
\end{prop}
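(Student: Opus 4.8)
The plan is to relate the two sides by tracking the Jacobian of $\Phi$ through the two
ways one can regard it: as a complex-linear map (giving $\Omega_\C(\Phi)\in\pi_3(U(3))$
after a choice of normal field) and as a real-linear map (giving $\Omega(f)\in\pi_3(SO(5))$
via the Hughes--Melvin definition in \ref{ss:Smale}). First I would recall from
\ref{ss:link}--\ref{cor:epsilon} that $f=\Phi|_{\mathfrak S^3}$ with $\mathfrak S^3=\Phi^{-1}(S^5_\epsilon)$
isotopic in $\mathfrak B_\epsilon\setminus 0$ to a small standard sphere $S^3\subset\C^2=\R^4$,
so that for computing $\Omega(f)$ we may work with $d\Phi$ restricted to this standard $S^3$;
this is legitimate since $\Omega(f)$ depends only on the regular homotopy class, and along
the isotopy the Jacobian maps stay in $GL^+(5,\R)$ (resp.\ in $V_2(\C^3)$) because $\Phi$ is
an immersion off the origin.

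The key computation: extend $f$ to an orientation-preserving immersion $F$ of a
tubular neighborhood $U$ of $S^3$ in $S^5$. A natural choice is to use $\Phi$ itself
together with a normal vector field $N_\Phi$, e.g.\ $N_\Phi=\overline{\partial_s\Phi\times\partial_t\Phi}$
as in Remark~\ref{re:stab}: the pair $(d\Phi,N_\Phi)$ defines, on a neighborhood of $S^3$
in $\R^5$ (after identifying $\R^5$ with a chart of $S^5$ and using $\C^2\times\R\subset\C^3$),
an immersion extending $f$, whose real Jacobian restricted to $S^3$ is, up to a fixed
invertible normal block, exactly the real form $\tau$ applied to the complex matrix
$(d\Phi,N_\Phi)|_{S^3}$. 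Under the identifications the extra normal directions contribute
only a constant factor homotopic to the identity in $SO$, so $[dF|_{S^3}]$ in $\pi_3(SO(6))$
(via $\pi_3(j)$) equals the image under $\pi_3(\tau)$ of the class of $(d\Phi,N_\Phi)|_{S^3}$
in $\pi_3(U(3))$, which by Remark~\ref{re:stab} is precisely $\Omega_\C(\Phi)$. Chasing this
through the commuting inclusions $SO(4)\hookrightarrow SO(5)\hookrightarrow SO(6)$ and
$U(2)\hookrightarrow U(3)$ (which, as noted in \ref{ss:sign}, commute with $\tau$) and using
that $\pi_3(j)$ is an isomorphism (equation~(\ref{eq:pi3})) yields
$\pi_3(\tau)(\Omega_\C(\Phi))=\pi_3(j)(\Omega(f))$.

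I expect the main obstacle to be the bookkeeping needed to show that the particular
extension $F$ built from $(d\Phi,N_\Phi)$ is genuinely an orientation-preserving immersion
of a full $5$-dimensional tubular neighborhood and that its real Jacobian on $S^3$ differs
from $\tau\bigl((d\Phi,N_\Phi)|_{S^3}\bigr)$ only by a factor that is null-homotopic as a
map $S^3\to GL^+$. In other words, one must carefully match the normal bundle of
$S^3\subset S^5$ with the two normal directions (one real from $N_\Phi$, one from the extra
$\R\subset\C$ direction passing from $\C^2$ to $\C^3$ and then to $\R^5=\R^6\setminus\{P\}$),
verify the orientation conventions are consistent with the boundary-convention-free
setup described after Proposition~\ref{prop:Smale}, and invoke Proposition~\ref{prop:Smale}
to know the answer is independent of these auxiliary choices. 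Once the identification of
Jacobians is in place, the passage to $\pi_3$ is formal.
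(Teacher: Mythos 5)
Your proposal follows essentially the same route as the paper: extend $f$ via $F=\Phi+rN_\Phi$, identify the real Jacobian of this extension on $\mathfrak{S}^3$ with $\tau\bigl((d\Phi,N_\Phi)|_{\mathfrak{S}^3}\bigr)$, and absorb the normal-framing discrepancies as null-homotopic correction factors (the paper does this via transition functions $\nu,\eta$ defined on the contractible spaces $\mathfrak{S}^5\setminus\{Q\}$ and $S^5\setminus\{P\}$, together with the Lie-group structure on $\pi_3$ from Remark~\ref{r:lie}). The bookkeeping you flag as the main obstacle is precisely what the paper's proof carries out, so the plan is sound.
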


\begin{proof}
By the definition of the Smale invariant, one has to extend $ f $ to a neigbourhood of
the standard embedding of $ \mathfrak{S}^3 $ in an $ \R^5 $  (cf. \ref{ss:Smale}).
On the other hand $ \Phi $ extends $ f $ in the $ \C^2 $ direction. We will compare these two
extensions using a  common extension $ F: W \to \C^3 $, where $ W $ is a suitable neighborhood of
$ \mathfrak{S}^3 $ in $ \C^3 $.

Let us consider a fixed $\epsilon$ which satisfies the properties of
 Corollary \ref{cor:epsilon}. We also write
$B^6_\epsilon =\{z:|z|\leq \epsilon\}\subset \C^3$, $S^5_\epsilon=\partial B^6_\epsilon$,
${\mathfrak B}^4_\epsilon:=\Phi^{-1}(B^6_\epsilon)$ for the $C^\infty$ ball in $\C^2$, and  $\mathfrak{S}^3_\epsilon:= \Phi^{-1}(S^5_\epsilon)$ for its boundary. (Late we will drop some
of the $\epsilon$'s.)
For positive  $\epsilon_1$, $\epsilon_2$ sufficiently closed to $\epsilon$, $\epsilon_1<\epsilon<\epsilon_2$, and for $0 <\rho\ll\epsilon$ one defines
$ F(s, t, r) = \Phi (s, t) + r \cdot  N_{ \Phi } (s, t) $, where $(s,t,r)\in W:=\Phi^{-1}(z:\epsilon_1<|z|<\epsilon_2)\times D^2_\rho$, $D^2_\rho$ is
the $\rho$-disc in $\C$, and $  N_{ \Phi} $ is the complex normal vector of $ \Phi $, see Remark~\ref{re:stab} .
Since the normal bundle of $f$ in $S^5_\epsilon$
is trivial (and since the transversality is an open property), we get that $F^{-1}(S^5_\epsilon) $ is diffeomorphic to $\mathfrak{S}^3_\epsilon\times D^2_\rho$.
In fact, if $p:\C^2\times D^2_\rho\to D^2_\rho$ is the natural  projection,
then for any $r\in D^2_\rho$ we can define  $\mathfrak{S}^3_{\epsilon,r}:=
F^{-1}(S^5_\epsilon)\cap p^{-1}(r)$. Then each  $\mathfrak{S}^3_{\epsilon,r}$ is a $C^\infty$ 3-sphere, being the boundary of the $C^\infty$ 4-ball $\mathfrak{B}^4_{\epsilon,r}
\subset p^{-1}(r)$. Then $F^{-1}(S^5_\epsilon)=\cup_{r\in D^2_\rho} \mathfrak{S}^3_{\epsilon,r}$. Moreover, $\mathfrak{B}^6:=\cup_{r\in D^2_\rho} \mathfrak{B}^4_{\epsilon,r}\subset \C^2\times \C$ is a thickened tubular neighbourhood of
$\mathfrak{B}^4_\epsilon\subset \C^2\times 0$, homeomorphic to the real 6-ball.
Its corners can be smoothed, hence we think about it as a $C^\infty$ ball. Its boundary $\mathfrak{S}^5:=\partial\mathfrak{B}^6$ (diffeomorphic to the 5-sphere)
is the union of $F^{-1}(S^5_\epsilon)$ (diffeomorphic to
$S^3\times D^2$) and $\cup_{r\in \partial D^2_\rho} \mathfrak{B}^4_{\epsilon,r}$
(diffeomorphic to  $B^4\times S^1$).

 In a point $ (s, t, 0) \in \mathfrak{S}^3_\epsilon\times \{0\} $
 the differential of $ F $ is
\[ dF (s, t, 0)  = ( \partial_s F(s, t, 0) , \partial_t F(s, t, 0) , \partial_r F(s, t, 0) )
= ( \partial_s \Phi (s, t) , \partial_t \Phi (s, t) , N_{ \Phi } (s, t) ) \mbox{ .}
\]
Thus,  the homotopy class of $ dF|_{\mathfrak{S}_\epsilon^3} $ equals $ \Omega_{ \C } ( \Phi ) $  (cf.  \ref{re:stab}). Therefore, taking the real function $ \tilde{F} : \R^6 \to \R^6 $
 (cf. \ref{ss:comprel}), its  real
Jacobian  satisfies $ [ d \tilde{F} |_{\mathfrak{S}_\epsilon ^3}]=
\pi_3 ( \tau ) ( \Omega_{ \C } ( \Phi ) ) $ .

On the other hand we show that $ [ d \tilde{F}|_{\mathfrak{S}^3}] = \pi_3 (j) (\Omega(f)) $.
In order to recover the Smale invariant $ \Omega (f) $ of
$ f=\Phi|_{\mathfrak{S}^3}: \mathfrak{S}^3 \looparrowright S^5 $,
first we need to fix a global coordinate systems in  a neighbourhood of the source
$\mathfrak{S}^3$ in $\mathfrak{S}^5$
and also in $\R^5\approx S^5_\epsilon\setminus \{\mbox{a point}\}$ containing ${\rm im}(f)$. Let us introduce 
the `outward normal at the end' convention to orient  compatibly  a manifold and its boundary. 
In this way we fix an orientation of $\mathfrak{S}^5 = \partial\mathfrak{B}^6 $ and $ S^5 = \partial B^6 $. 
(According to \ref{ss:Smale}, the output of the proof is independent of the convention choice.)

In the first case we introduce a coordinate system
in $\mathfrak{S}^5\setminus \{Q\}\simeq \R^5$ compatibly with the orientation,
 where $ Q \in \mathfrak{S^5} \setminus \mathfrak{S}^3 $ is an arbitrary point (e.g.
 $(0,0,\rho)$).
 Let $ \nu' $ denote the framing of $ T (\mathfrak{S^5} \setminus \{Q \})\simeq T\R^5$ induced by
 this coordinate system. We can extend the outward normal frame $ \nu_6 $ of $ \mathfrak{S}^3 $ in $ \C^2 $ to the rest of $ \mathfrak{S}^5 \setminus \{Q \}  $ (as the outward normal vector
 of $\mathfrak{S}^5$). This framing can be extended to
a neighbourhood $ V $ of $ \mathfrak{S}^5 \setminus \{Q \}  $ in $ \C^3 $. Let $ \nu: V \to GL^+ (6, \R) $ denote this framing (or more precisely, $ \nu $ is the transition function from the standard framing inherited from $ \R^6 $ to the one just
constructed).

The target is the standard $ S^5 \subset \R^6 $. We can choose a point $ P \in S^5 \setminus f( \mathfrak{S}^3 ) $ and a coordinate system on $ S^5 \setminus \{P\} $ compatibly with the orientation. The coordinate system induces a framing $ \eta' $ of the tangent bundle $ T ( S^5 \setminus \{P\} ) $
of $ S^5 \setminus \{P\} $. In the points of the target of $ \tilde{F} $ the vectors of $ \eta' $ and $ d \tilde{F} ( \nu_6 ) $ are linearly independent, that is,
$d \tilde{F} ( \nu_6 ) $ behaves like a normal framing
(this follows from the transversality property of
\ref{ss:link}).
We can extend it to a normal framing $\eta_6$
of $ S^5 \setminus \{P\}  $ in $ \R^6 $. In this way we get a framing of the tangent bundle of a neighbourhood $ W $ of $ S^5 \setminus \{P\} $ in $ \R^6 $. Let $ \eta : W \to GL^+(6, \R ) $ denote the transition from the framing on $ W $ inherited from $ \R^6 $ to the framing just defined.

The Smale invariant $\Omega(f)$ is constructed in the following way, cf. \ref{ss:Smale}.
Take
\[ \mathcal{J}_{( \nu', \eta')} ( \tilde{F}|_{\mathfrak{T}}),  \]
the Jacobian of $ \tilde{F} $ restricted to $ \mathfrak{T}=F^{-1}(S^5_\epsilon) $ prescribed in the framings $ \nu' $ and $ \eta' $. The homotopy class of this matrix restricted to $ \mathfrak{S}^3 $ (as a map $ \mathfrak{S}^3 \to GL^+(6, \R) $) equals to $ \Omega(f) $. (Since $ \tilde{F} $ preserves the orientation, $ \tilde{F}|_{\mathfrak{T}} $ does as well.)
\[ \mathcal{J}_{ (\nu, \eta)} ( \tilde{F})|_{ \mathfrak{T} } = j ( \mathcal{J}_{ (\nu', \eta' )} ( \tilde{F}|_{ \mathfrak{T}})) \]
because $ d \tilde{F} (\nu_6) = \eta_6 $, thus the homotopy class of $ \mathcal{J}_{ (\nu, \eta ) } ( \tilde{F} )|_{ \mathfrak{S}^3} $ equals  $ \pi_3 (j) ( \Omega(f)) $.

On the other hand $ \mathcal{J}_{ ( \nu, \eta ) } ( \tilde{F}) = ( \eta^{-1} \circ \tilde{F}) \cdot d \tilde{F} \cdot \nu $. As maps $ \mathfrak{S}^3 \to GL^+(6, \R) $, $ ( \eta^{-1} \circ \tilde{F})|_{\mathfrak{S}^3} $ and $ \nu|_{\mathfrak{S}^3} $ are nullhomotopic because
 the vector fields are defined on the contractible spaces $\mathfrak{S}^5\setminus \{Q\}$ and
 $S^5\setminus \{P\}$. Therefore (cf. Remark~\ref{r:lie})
\[ [ \mathcal{J}_{ ( \nu, \eta ) } ( \tilde{F})|_{\mathfrak{S}^3} ]=
[( \eta^{-1} \circ \tilde{F})|_{\mathfrak{S}^3}] + [d \tilde{F}|_{\mathfrak{S}^3}] + [ \nu|_{\mathfrak{S}^3} ] = [d \tilde{F}|_{\mathfrak{S}^3}] \mbox{ .}
\]
The left hand side of this identity is $ \pi_3 (j) ( \Omega(f)) $, while the right
hand side  $ \pi_3 ( \tau ) ( \Omega_{ \C } ( \Phi ) ) $.
\end{proof}

\subsection{Proof of Theorem \ref{th:embintro}.} Part (a) follows from Theorem \ref{th:main} and
\cite{HM}.

In part (b),
the implications (1) $\Rightarrow$ (2,3,4), and (4) $\Rightarrow$ (3) are clear.

The proof of (2) $\Rightarrow$ (1): (2) implies $C(\Phi)=0$ by Theorem \ref{th:main}, while
this vanishing implies (1) via Mond's Theorem \ref{th:C}.
For (3) $\Rightarrow$ (1) we provide three proofs, each of them emphasize a different
geometrical/topological aspect.

{\it (A) (Based on Mumford's Theorem.)} \ If $f$ is an embedding then the image
$(X,0)$ of $\Phi$ is an
isolated hypersurface singularity in $(\C^3,0)$. Moreover, its link is $S^3$, hence by Mumford's
theorem \cite{mumford} $(X,0)$ is smooth. Hence its normalization $\Phi$ is an isomorphism.

{\it (B) (Based on Mond's Theorem.)} \   Let us take the generic deformation $\Phi_\lambda$,
and consider the preimage $D$ of the the set of double points. It is a 1--dimensional  closed
complex analytic subspace of the disc in $\C^2$. The preimages of cross cap and triple points
are interior points of the closure of $D$, while its boundary is $D\cap S^3$ is the preimage of the
double points of the immersion of $f:S^3\looparrowright S^5$. If $f$ is an embedding then
$\partial D=\emptyset$, hence $D$ is a compact analytic curve in (the disc of) $\C^2$,
hence it should be empty.  This shows that $\Phi_\lambda$ has no cross cap and triple points either.
Hence $C(\Phi)=0$, which implies (1) by \ref{th:C} as before.

{\it (C) (Based on Ekholm--Sz\H{u}cs Theorem.)} \ As above, we get that $\Phi_\lambda$ is an
embedding. Since $\Phi|S^3$ is an embedding, this embedding is regular homotopic to
$\Phi_\lambda|S^3$, hence they have the same Smale invariant. In the second case it
 can be determined by an
Ekholm--Sz\H{u}cs formula \cite{ESz} (recalled as Theorem \ref{th:ESz} here): since ${\rm im}(\Phi_\lambda)$
is an embedded Seifert surface with signature zero we get $\Omega(f)=0$.
This basically proves (3) $\Rightarrow$ (2). Then we continue with the already shown
(2) $\Rightarrow$ (1).

\vspace{2mm}

In fact, the main point of this last proof is already coded in Hughes--Melvin Theorem
\cite{HM} (\ref{th:HM} here),
but in that statement the Seifert surface is in $\R^5$ and not in $\R^6_+$ (or in the
6--ball). But \ref{th:ESz} shows that that Hughes--Melvin Theorem is true even if the
4--manifold $M^4$ with boundary in $\R^5$
is embedded in $\R^6_+$ (instead of $\R^5$).

\section{Examples}\labelpar{s:ex}

\subsection{} This section contains the first list of the promised examples.

\begin{ex}\label{ex:1} Fix $k \in \Z_{\geq 0} $. $ \Phi_{-k} (s, t) = (s, t^2, t^3 + s^k t ) $.
The ideal $ J $ (cf. \ref{ss:Wh}) is generated by  $ (2t, 3t^2 + s^k, -2kt^2 s^{k-1})= ( t, s^k ) $.
Hence $  \Omega (f) = - C( \Phi ) = -k $.

 This family gives representatives for every regular homotopy class with non-positive
 sign--refined Smale invariant. Furthermore, we can represent any regular homotopy class with
 Smale invariant $ k $ in the form $ \Phi_{-k} \circ \kappa $, where $ \kappa $ is the reflection
  $ \kappa (z, w) = (z, \bar{w}) $ (c.f. \cite[Lemma 3.4.2.]{ekholm3}).

\end{ex}

\begin{ex}[Singularities of type $ A $]\label{ex:A} These are quotient  singularities
 of the form $ (X,0) = (\C^2, 0)/ \Z_k $, where $ \Z_k=\{\xi\in \C\,|\, \xi^k=1\}$
  denotes the cyclic group of order $k$, and the action is $ \xi* (s, t) = ( \xi s, \xi^{-1} t )$
  for $ \xi \in \Z_k $. $(X,0)$ is the image of a map $\Phi$,
 whose components  are the generators of the invariant algebra $\C\{s,t\}^{\Z_k}$,
  see \cite[page 95]{invariant}, namely $ \Phi (s, t) = (s^k, t^k, st ) $. One can easily compute that
$ J = ( s^k, t^k, s^{k-1} t^{k-1}) $ and $ \Omega (f) = - C( \Phi ) = - (k^2 -1) $.
($(X,0)$ is the $A_{k-1}$--singularity.)
\end{ex}

\begin{ex}[Singularities of type $ D $]\label{ex:D} These are the quotient singularities of
 form $  (\C^2, 0)/ D_n $ where $ D_n $ denotes the binary dihedral group, \cite[page 89]{invariant}.
$ \Phi (s, t) = ( s^2 t^2, s^{2n} + t^{2n} , st (s^{2n} - t^{2n})) $ \cite[page 95]{invariant}.
By a computation
$ J = ( st (s^{2n}-t^{2n}) , s^2 t^2 (s^{2n} + t^{2n}) , (s^{2n} - t^{2n})^2 - 4n s^{2n} t^{2n}) $.
In singularity theory the quotient is known as the $D_{n+2}$--singularity.

A possible
 computation of $ \dim \left( \mathcal{O}_{\C^2, 0} / J \right) $ is based on the following facts.

\begin{lem}\label{lem:inter}
(a) Take $ f_1 , f_2, h \in \mathcal{O}_{\C^2, 0} $ such that $ f_1 f_2$ and $h $ are relative primes.
Then one has the  following exact sequence:
\[
 0 \to \mathcal{O}_{\C^2, 0} /( f_2 , h )  \to  \mathcal{O}_{\C^2, 0}
 /( f_1 f_2 , h )  \to  \mathcal{O}_{\C^2, 0} /( f_1 , h ) \to 0 \mbox{ .}
\]

(b) Take $ f_1 , f_2, g, h \in \mathcal{O}_{\C^2, 0} $ such that the ideal
$(f_1f_2, g,h) $ has finite codimension, and  $ h = f_1 h' $ for some $h'\in \mathcal{O}_{\C^2, 0} $.
Then one has the following exact sequence:
\[
 0 \to\mathcal{O}_{\C^2, 0} /( f_2 , g, h' )  \to  \mathcal{O}_{\C^2, 0} /
 ( f_1 f_2 ,g,  h )  \to  \mathcal{O}_{\C^2, 0} /( f_1 , g )  \to 0 \mbox{ .}\]
\end{lem}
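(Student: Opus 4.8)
The plan is to exhibit both short exact sequences as instances of the same elementary commutative-algebra fact: if $I\subset J$ are ideals of a ring $R$ and $a\in R$, then multiplication by $a$ induces a map $R/(J:a)\to R/I$ whose image is $(I+aR)/I$, provided one has chosen $I$, $J$, $a$ compatibly so that $J=(I:a)$. Concretely, for part (a), I would set $R=\mathcal{O}_{\C^2,0}$ and consider the multiplication-by-$f_1$ map
\[
 R/(f_2,h)\xrightarrow{\ \cdot f_1\ } R/(f_1f_2,h).
\]
It is well defined since $f_1\cdot(f_2,h)\subset(f_1f_2,h)$, its cokernel is visibly $R/(f_1,h)$, so the only thing to check is injectivity. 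Injectivity says: if $f_1 x\in(f_1f_2,h)$, then $x\in(f_2,h)$. Write $f_1x=f_1f_2 u+hv$; then $f_1(x-f_2u)=hv$, and since $f_1f_2$ and $h$ are coprime (hence in particular $f_1$ and $h$ are coprime, as $\mathcal{O}_{\C^2,0}$ is a UFD), $h\mid (x-f_2u)$, giving $x\in(f_2,h)$ as required. This is the step that uses the relative-primeness hypothesis in an essential way, and it is where one must be slightly careful that "coprime" in a $2$-dimensional regular local ring really does yield the divisibility conclusion; it does, because $f_1,h$ form a regular sequence when coprime, so the Koszul relation is trivial.

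For part (b) the structure is identical but the bookkeeping is a shade more delicate because of the auxiliary generators $g$ and the factorization $h=f_1h'$. Again take the multiplication-by-$f_1$ map
\[
 R/(f_2,g,h')\xrightarrow{\ \cdot f_1\ } R/(f_1f_2,g,h),
\]
which is well defined since $f_1\cdot(f_2,g,h')=(f_1f_2,f_1g,f_1h')=(f_1f_2,f_1g,h)\subset(f_1f_2,g,h)$. Its cokernel is $R/(f_1f_2,g,h,\,\mathrm{im}(\cdot f_1))=R/(f_1,g,h)=R/(f_1,g)$ (the last equality because $h=f_1h'$ is already in $(f_1)$). So once more the content is injectivity: if $f_1x\in(f_1f_2,g,h)$ then $x\in(f_2,g,h')$. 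Write $f_1x=f_1f_2u+gv+hw=f_1f_2u+gv+f_1h'w$, so $gv=f_1(x-f_2u-h'w)$. Here I would use that $(f_1f_2,g,h)$ has finite codimension, which forces $f_1$ and $g$ to have no common factor (a common factor would give a positive-dimensional component of the zero locus, contradicting finite codimension); hence $f_1\mid v$, say $v=f_1v'$, and then $x-f_2u-h'w=gv'$, i.e. $x\in(f_2,g,h')$.

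The main obstacle, such as it is, is purely the coprimeness/regular-sequence lemma underlying both injectivity arguments — namely that in $\mathcal{O}_{\C^2,0}$ (a two-dimensional regular local ring, hence a UFD with depth-$2$ maximal ideal) two elements with no common irreducible factor form a regular sequence, so that $f_1 y\in (h)$ implies $y\in(h)$, and likewise $f_1 y\in (g)$ implies $y\in(g)$ under the finite-codimension hypothesis. I would isolate this as a one-line observation (citing that $\mathcal{O}_{\C^2,0}$ is a UFD, or equivalently invoking that the vanishing loci of coprime elements meet in codimension $2$) and then the two exact sequences drop out mechanically. Exactness in the middle of each sequence is automatic once injectivity on the left and surjectivity onto the cokernel on the right are established, since the image of the left map is by construction the kernel of the right (quotient) map.
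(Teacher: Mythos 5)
Your proof is correct, and it is essentially the argument the paper has in mind: the paper simply cites the additivity of local intersection numbers from Fulton for part (a) (whose standard proof is exactly your multiplication-by-$f_1$ exact sequence, with injectivity from the UFD property of $\mathcal{O}_{\C^2,0}$) and declares part (b) ``similar''. You have merely written out in full what the paper delegates to the reference, including the correct observation that finite codimension of $(f_1f_2,g,h)$ forces $f_1$ and $g$ to be coprime in part (b).
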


\begin{proof}
Part (a) is well-known as the additivity property of the local intersection number of plane curves,
see e.g. \cite{fulton}. The proof of part (b) is similar.
\end{proof}

 Using these lemmas the codimension of $ J $ can be calculated, and it is $ 4n^2 + 12n -1 $.
 Hence, the Smale invariant of the covering $S^3\to \{\mbox{link of the $ D_{n+2}$--singularity}\}$
  is $ - ( 4n^2 + 12n -1 ) $.
\end{ex}

\begin{example}\label{ex:wh}
Assume that the three components of $\Phi$ are weighted homogeneous of weights
$w_1$ and $w_2$ and degree $d_1$, $d_2$ and $d_3$. Then, cf. \cite{Mondwh},
$$C(\Phi)=\{d_1d_2+d_2d_3+d_3d_1-(w_1+w_2)(d_1+d_2+d_3-w_1-w_2)-w_1w_2\}/w_1w_2.$$
Mond proved this identity for germs with finite right--left codimension, but the same proof works for
germs with finite ${\mathcal O}_{\C^2,0}/J$.

For example, if $\Phi: (\C^2,0)\to (\C^2,0)/G\hookrightarrow (\C^3,0)$ is as in Example
\ref{ex:ade}, then all three components are homogeneous ($w_1=w_2=1$). In the case of
$A_{k-1}$ and $D_{n+2}$ the degrees are $(k,k,2)$ and $(4,2n,2n+2)$ respectively.
Hence the values $C(\Phi)$ from Examples \ref{ex:A} and \ref{ex:D} follow in this way as well.

For $E_6,\ E_7$ and $E_8$ singularities the degrees are
$(6,8,12)$, $(8,12,18)$ and $(12, 20,30)$ respectively, see \cite[4.5.3--4.5.5]{invariant},
hence the corresponding values
$-\Omega(f)$ are 167, 383, 1079.
\end{example}

\section{Smale invariant via Seifert surfaces}\labelpar{s:ss}

\subsection{} In this section we review three important topological formulae targeting the
Smale invariant in terms of the geometry of oriented Seifert surfaces.
They are stated and proved only
 up to a sign ambiguity. In the next section we will  show that
the sign--refined Smale invariant 
appears in all these expressions with a unique
well--defined sign, and we determine
it simultaneously for all  formulae. The discussion  has an extra output as well:
the topological ingredients in the formulae below
will get reinterpretations in terms of complex analytic invariants, provided that the immersion is
induced by a holomorphic germ $\Phi$.

In the spirit of the discussion of subsection \ref{ss:Smale}, in this section we will
write ${\bf S}^3$ for an `oriented abstract $S^3$'. $\Omega(f)$ will denote 
the Smale invariant (given by any of its definitions, still having its sign--ambiguity). 
Note that in the next statements we need to fix a `boundary convention', in order to have the notion 
of oriented $\partial M$. (Nevertheless, the sign--corrected formulae will be `boundary convention' free,
cf. Theorem \ref{thm:new}.)

\begin{thm}[Hughes, Melvin \cite{HM}]\label{th:HM}
 Let $ f: {\bf S}^3 \hookrightarrow \R^5 $ be an embedding and $ \tilde{f}: M^4 \hookrightarrow \R^5 $ be a Seifert surface of $f$, i.e. $ M^4 $ is a compact oriented $4$-manifold with boundary
  $ \partial M^4 = {\bf S}^3 $ and $ \tilde{f} $ is an embedding such that $ \tilde{f}|_{ \partial M^4} = f $.
 Let $\sigma(M^4)$ be the signature of  $M^4 $. Then
\begin{equation}\label{eq:hm}
\Omega (f) = \pm \frac{3}{2} \sigma (M^4) \mbox{ .} \end{equation}
\end{thm}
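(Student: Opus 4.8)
The plan is to reduce the statement to a computation of the Smale invariant via the \emph{obstruction-theoretic} picture set up in subsection \ref{ss:Smale}, and then to compare two normal framings along $M^4$. First I would recall that $\Omega(f)$ can be computed by extending $f$ to an orientation-preserving immersion $F\colon U\looparrowright\R^5$ of a tubular neighbourhood $U$ of $\mathbf{S}^3\subset\R^5$ and reading off $[dF|_{\mathbf{S}^3}]\in\pi_3(SO(5))$. When $f$ bounds an embedded Seifert surface $\tilde f\colon M^4\hookrightarrow\R^5$, the normal bundle $\nu$ of $\tilde f$ is a line bundle; since $M^4$ is oriented and $\R^5$ is oriented, $\nu$ is orientable, hence trivial. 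A trivialisation of $\nu$, together with the fixed trivialisation of $T\R^5$, gives a trivialisation of $TM^4$ over the $3$-skeleton (equivalently, an almost-framing of $M^4$), and the obstruction to extending it over the $4$-cell is an element of $H^4(M^4,\mathbf{S}^3;\pi_3(SO(5)))\cong\Z$ whose image under the boundary map recovers $\Omega(f)$. The claim is that this obstruction, suitably normalised, equals $\tfrac32\sigma(M^4)$.

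The key computational input is that the relevant characteristic number of a stably almost-parallelisable $4$-manifold with boundary is governed by $p_1$ via Hirzebruch signature, and by $e(\nu)=0$: explicitly, for a closed spin (or almost-parallelisable) $4$-manifold one has $p_1=3\sigma$ by the signature theorem (since $L_1=p_1/3$), and the factor $3/2$ appears because the generator of $\pi_3(SO(5))\cong\pi_3(SO)\cong\Z$ detecting $p_1$ is twice a generator — more precisely, the first Pontryagin class of the generating bundle over $S^4$ equals $\pm 2$, so the obstruction cocycle evaluated on the fundamental class is $p_1(M^4,\partial)/2 = 3\sigma(M^4)/2$. To make this precise I would work with the double $DM = M\cup_{\mathbf{S}^3}(-M)$ or, better, cap $M$ off in $\R^5$ by a smooth disc-bundle-like piece to land in a closed setting, and track the Pontryagin number through the relative-to-absolute comparison; the null-homotopy of the framings on the contractible pieces (exactly as used in the proof of the Proposition in \S\ref{s:proof}) shows the only surviving contribution is the relative $p_1$.

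The main obstacle I anticipate is precisely the normalisation and sign bookkeeping: one must identify the generator of $\pi_3(SO(5))$ used in \eqref{eq:Smale} with the generator $[L]$ fixed in \ref{ss:sign}, check that the Pontryagin class of the associated bundle over $S^4$ is $\pm 2$ (not $\pm 1$), and confirm that the relative Pontryagin number $\langle p_1(TM,\text{framing on }\partial), [M,\partial M]\rangle$ equals $3\sigma(M)$ under the chosen boundary convention. This is where the ``$\pm$'' in \eqref{eq:hm} originates — the obstruction class lives in a group canonically $\Z$ only after choosing a generator, and neither the framing of $\nu$ nor the boundary orientation convention is pinned down here — so in the present section I would only prove the identity up to sign, leaving the sign determination to section \ref{s:eszcomp} as the excerpt announces. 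A secondary technical point is justifying that the normal framing of $\tilde f$ is unique up to homotopy (so that the construction is well defined), which follows since $[M^4,\mathbf{S}^3;\,S^0]$-type data is rigid: $\pi_0$ of the gauge group of a trivial line bundle over a connected $4$-complex is trivial once an orientation is fixed.
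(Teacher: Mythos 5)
The paper does not prove this statement: Theorem~\ref{th:HM} is quoted verbatim from Hughes--Melvin \cite{HM} as background, so there is no in-paper argument to compare yours against. Your obstruction-theoretic sketch is, in outline, the standard (essentially the original) proof: triviality of the normal line bundle makes $M^4$ stably parallelisable, the top obstruction to framing $TM^4\oplus\nu$ rel the boundary framing coming from $f$ lives in $H^4(M^4,\partial M^4;\pi_3(SO(5)))\cong\Z$, the relative Pontryagin number equals $3\sigma(M^4)$ by the signature theorem, and the factor $\tfrac12$ comes from $p_1$ taking the value $\pm 2$ on a generator of $\pi_3(SO)$. The only points to tighten are bookkeeping ones you already flag: the connecting map goes $\delta\colon H^3(\partial M^4;\pi_3)\to H^4(M^4,\partial M^4;\pi_3)$, i.e.\ the relative obstruction is the image of $\Omega(f)$ under $\delta$ rather than the other way round, and one must check independence of the choice of destabilised framing over the $3$-skeleton (the usual argument that lower obstructions vanish and the indeterminacy does not affect the top class). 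With those caveats your proposal is sound and consistent with how the paper uses the result, namely only up to the sign ambiguity that section~\ref{s:eszcomp} later resolves.
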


For arbitrary immersions Ekholm and Sz\H{u}cs generalized the formula
via generic \emph{singular Seifert surfaces}, and
in two different ways:   mapped either in $\R^5$ or in $\R^6_+$ \cite{ESz},
 see also
 \cite{Esz2,saeki}.


 If $M^4$ is a compact oriented $4$-manifold and $ g: M^4 \to \R^5 $ is a generic $C^\infty$ map, then
  $g$ has isolated \emph{$ \Sigma^{1, 1} $--points (cusps)}, each endowed with a well--defined sign.
  Let $\#\Sigma^{1,1}(g)$ be their `algebraic' number (cf.  \cite{ESz}).

\begin{thm}[Ekholm, Sz\H{u}cs \cite{ESz}] Let $ f: {\bf S}^3 \looparrowright \R^5 $ be an immersion 
and $ M^4 $ be a compact oriented $4$-manifold with boundary ${\bf  S}^3 $. Let 
$ \tilde{f}: M^4 \to \R^5 $ be a generic map such that
$ \tilde{f}|_{ \partial M^4} $ is regular homotopic to $ f $ and $ \tilde{f}$ has no singular points 
near the boundary. Then 
\begin{equation}\label{eq:cuspos}
\Omega (f) = \pm \frac{1}{2} (3 \sigma (M^4) + \# \Sigma^{1, 1} (\tilde{f}))  \mbox{ .} \end{equation}
\end{thm}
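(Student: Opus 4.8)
The plan is to reduce the statement to the embedded (Hughes--Melvin) case by a cobordism/surgery argument that trades cusps for signature contributions, keeping track of the relevant bordism group. First I would recall that the Smale invariant classifies regular homotopy classes of immersions $\mathbf{S}^3\looparrowright \R^5$, so it suffices to exhibit \emph{one} generic map $\tilde f: M^4\to\R^5$ with prescribed boundary behaviour and evaluate the right-hand side of \eqref{eq:cuspos} on it; both sides are invariants of the regular homotopy class of $f$ alone (the left by definition, the right because $3\sigma(M^4)+\#\Sigma^{1,1}(\tilde f)$ is a cobordism invariant of generic maps extending a fixed $f$ off the boundary — this is the technical heart and I would isolate it as a lemma). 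The key mechanism is the elimination-of-cusps surgery: near an isolated $\Sigma^{1,1}$-point one can perform a local modification of $M^4$ (connected-summing with a standard piece, e.g. changing $M$ by a handle or by a $\C P^2$-summand) which cancels that cusp at the cost of a known, sign-definite change in $\sigma(M^4)$; iterating, one arrives at a generic map with \emph{no} cusps, i.e. (after a further small perturbation removing higher strata, which are absent in this dimension for a generic map $4\to 5$) an \emph{immersion} $M^4\looparrowright\R^5$.

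Once $\tilde f$ is a cusp-free generic map, $\#\Sigma^{1,1}(\tilde f)=0$ and the formula collapses to $\Omega(f)=\pm\frac32\sigma(M^4)$; if in addition $\tilde f$ is an embedding this is exactly Theorem~\ref{th:HM}. So the second step is: given an immersion $\tilde f: M^4\looparrowright\R^5$ with $\partial$-behaviour regular homotopic to $f$, replace $M^4$ by an embedded Seifert surface. Here I would use that the immersed $M^4$ can be pushed into $\R^6_+$ and the self-intersection set (a surface, generically) resolved, or alternatively invoke that in the bordism group $\Omega_4$ of closed oriented $4$-manifolds the signature is the only invariant and one may arrange $M^4$ to be a boundary connected sum of copies of $\pm\C P^2$; on each such standard piece the embedded Seifert surface realizing the corresponding signature is explicit (e.g. a punctured $\C P^2$ embedded in $\R^6_+$ bounding the trivial $S^3$, so the boundary data is controlled). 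Capping off and keeping the algebra of signatures additive under these operations then yields the embedded case.

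The third step is bookkeeping of signs. The $\pm$ in \eqref{eq:cuspos} is forced to be consistent with the $\pm$ in \eqref{eq:hm} because the cusp-elimination surgeries change $\sigma$ and $\#\Sigma^{1,1}$ in a way that leaves $3\sigma+\#\Sigma^{1,1}$ unchanged up to the global sign, and the endpoint of the reduction is precisely a Hughes--Melvin Seifert surface; thus the sign ambiguity in the two theorems is \emph{the same} ambiguity (this is exactly the point the authors exploit later in \S\ref{s:eszcomp} and in proof (C) of Theorem~\ref{th:embintro}(b), and also explains the remark that Hughes--Melvin holds with $M^4\subset\R^6_+$ rather than $\R^5$). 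I would present this as: choose a generator of $\pi_3(SO(5))$, check the sign on one model immersion, and propagate.

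The main obstacle I anticipate is the cobordism-invariance lemma, i.e. showing that if $\tilde f_0,\tilde f_1: M_i^4\to\R^5$ are two generic maps whose boundary restrictions are both regular homotopic to $f$ (with no singularities near the boundary), then $3\sigma(M_0)+\#\Sigma^{1,1}(\tilde f_0)=3\sigma(M_1)+\#\Sigma^{1,1}(\tilde f_1)$ up to sign. This requires a relative bordism argument over the classifying space for $\Sigma^{1,1}$-singularities of maps of $4$-manifolds to $\R^5$: one glues $M_0$ and $-M_1$ along their boundaries (using the regular homotopy to interpolate the maps on a collar), obtains a closed $4$-manifold with a generic map to $\R^5$, and must identify $3\sigma+\#\Sigma^{1,1}$ of a \emph{closed} such map with $0$ — equivalently, compute that the relevant characteristic number / cobordism invariant vanishes. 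This is the Ekholm--Sz\H{u}cs input one is genuinely allowed to cite, so in the write-up the honest move is to invoke their singular-bordism computation (or Saeki's, cf.\ \cite{Esz2,saeki}) rather than reprove it; the rest of the argument is then the elementary surgery-and-signature reduction sketched above.
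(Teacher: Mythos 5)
This statement is not proved in the paper at all: it is quoted verbatim from Ekholm--Sz\H{u}cs \cite{ESz} (with the sign left ambiguous), and the paper's only original contribution concerning it is the determination of the sign in Theorem~\ref{thm:new}, which is done by evaluating both sides on explicit holomorphic examples (the cross cap and $A_1$) in sections~\ref{s:eszcomp}--\ref{s:calc}. So there is no proof in the paper to compare yours against; what you have written is an attempted reconstruction of the Ekholm--Sz\H{u}cs argument itself.

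Judged on its own terms, your sketch has two genuine gaps. First, the reduction to Hughes--Melvin via cusp elimination does not work as stated: for a generic map $g\colon M^4\to\R^5$ the stratum $\Sigma^{1,0}(g)$ has codimension $1\cdot(5-4+1)=2$ in $M^4$, i.e.\ it is a \emph{surface} of singular points, and it persists after the isolated $\Sigma^{1,1}$ points are removed. A cusp-free generic map $M^4\to\R^5$ is therefore not an immersion, and no ``further small perturbation'' removes this stratum (indeed an immersion $M^4\looparrowright\R^5$ of a closed oriented $M^4$ forces $p_1(M)=0$, hence $\sigma(M)=0$, so for $\sigma\neq 0$ no such immersion exists). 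Handling this fold surface and the self-intersection locus is precisely where the real work of \cite{ESz} lies, and it is also why their second formula (\ref{eq:hurk}) needs the additional terms $t$, $l$, $L$. Second, the ``cobordism-invariance lemma'' you isolate --- that $3\sigma(N)+\#\Sigma^{1,1}(g)=0$ for every generic map $g$ of a closed oriented $4$-manifold $N$ to $\R^5$ --- is not an auxiliary input but is essentially equivalent to the theorem itself (it is the Thom-polynomial-type identity $\#\Sigma^{1,1}(g)=-3\sigma(N)$ together with the relative gluing argument); proposing to cite it from \cite{ESz} makes the whole argument circular. Your final paragraph on propagating the sign from one model example is, by contrast, exactly the strategy the paper actually carries out in section~\ref{s:eszcomp}, so that part is sound and consistent with the text.
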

The last formula, the most important from the point of view of this note,
uses generic $C^\infty$ maps $ g: M^4 \to \R^6 $ defined on  compact oriented $4$-manifolds $M^4$.
It involves three topological invariants associated with such a map.
Next we review their definitions. They will be computed for two concrete holomorphic  maps
in order to identify the missing sign.

 If $g$ is as above, then it
 has isolated triple values (three local sheets of $ M^4$ intersecting in general position).
 Such a point is endowed with a well--defined sign \cite[2.3]{ESz}).
\begin{defn}[\cite{ESz}]
 $t(g) $ denotes the algebraic number of the triple values of $g$.
\end{defn}

Next, assume that $ \partial{M^4} = {\bf S}^3$ and 
$g: (M^4, \partial M^4) \to (\R^6_+, \partial \R^6_+ )$
is generic, it is  nonsingular near the boundary, and  $ \tilde{f}^{-1}(
\partial \R^6_+ )= \partial M^4 $.
 Here  $\R^6_+ $ is the closed half--space of $\R^6$.
 The set of  double values of $ g$ is an immersed oriented $2$-manifold,  denoted by $D(g)$.
 Its oriented boundary consists of two parts, the intersection
  of $D(g)\cap \partial \R^6_+$, and the other,
  disjoint with $\partial \R^6_+$, is the set of singular values $\Sigma(g)$ of $ g $.
 Let $ \Sigma'(g) $ be a copy of $ \Sigma(g) $ shifted slightly along the outward normal vector field
 of $ \Sigma(g) $ in $D(g)$. Then $ \Sigma'(g) \cap g(M^4) = \emptyset $.

\begin{defn}[\cite{ESz}]\label{d:l}
 $ l(g) $ denotes the linking number of $g(M^4) $ and $ \Sigma'(g) $ in $ (\R^6_+, \partial \R^6_+ ) $.
\end{defn}

For a generic (self-transverse) immersion $ f: {\bf S}^3 \looparrowright \R^5 $ one defines
 an integer $L(f)$ as follows
 \cite[2.2]{ESz}, \cite[2.2]{saeki}.
 $ f$ has a normal framing $ (v_1, v_2) $ which is unique up to
 homotopy. In any double value $ y= f(x_1) = f(x_2) $ set $ N(y)= v_1(x_1) + v_1(x_2) $.
 Let $ D'(f) $ be a copy of the set of double values $ D(f) $ of $f $ shifted slightly along the vector field $ N$. $ D(f)$ (hence  $D'(f) $ too)  is a $1$-manifold and 
 $ D'(f) \cap f({\bf S}^3) = \emptyset $.

\begin{defn}[\cite{ekholm3,ESz,saeki}]\label{d:L}
 $L(f)$ is a the linking number of $ f({\bf S}^3)$ and $ D'(f) $ in $ \R^5 $.
\end{defn}

\begin{thm}[Ekholm, Sz\H{u}cs \cite{ESz}]\label{th:ESz}
 Let $ f: {\bf S}^3 \looparrowright \R^5 $ be an immersion and $ M^4 $ be a compact oriented $4$-manifold with boundary $ \partial M^4 = {\bf S}^3 $. Let $ \tilde{f}: (M^4, \partial M^4) \to (\R^6_+, \partial \R^6_+ )$ be a generic map nonsingular near the boundary, such that $ \tilde{f}^{-1}( \R^6_+ )= \partial M^4 $ and $ \tilde{f}|_{\partial M^4} $ is regular homotopic to $ f $. Then
 \begin{equation}\label{eq:hurk}
 \Omega (f) = \pm \frac{1}{2} (3 \sigma (M^4) + 3 t(\tilde{f}) - 3 l(\tilde{f}) + L(f))  \mbox{ .} \end{equation}
\end{thm}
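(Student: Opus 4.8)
\textbf{Proof proposal for Theorem \ref{th:ESz}.}

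The plan is to reduce the statement to the half-space version of the Ekholm--Sz\H{u}cs philosophy, following the same strategy that yields \eqref{eq:hm} and \eqref{eq:cuspos}, but now tracking the extra correction terms that appear when the Seifert surface is pushed into $\R^6_+$ rather than kept in $\R^5$. First I would fix a generic map $\tilde f:(M^4,\partial M^4)\to(\R^6_+,\partial\R^6_+)$ as in the hypothesis and homotope $\tilde f|_{\partial M^4}$ to the generic immersion $f$ rel a collar, so that near $\partial M^4$ the map is an embedded product $f\times[0,\varepsilon)$. The key algebraic object is the normal bundle / Gauss map data of $\tilde f$ along $M^4$: since $M^4$ is compact oriented with boundary ${\bf S}^3$, a framing of the (stable) normal bundle of $\tilde f$ over the interior exists away from the singular set $\Sigma(\tilde f)$ and the double-value set $D(\tilde f)$, and the obstruction to extending it over all of $M^4$ decomposes into the contributions localized at (i) the cusp/triple-value strata, (ii) the self-intersection 2-manifold $D(\tilde f)$, and (iii) the signature of $M^4$ (the $p_1$-type term). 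This is exactly the bookkeeping in \cite{ESz}; I would quote their computation of these local contributions rather than redo it.

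The heart of the matter is to identify each local contribution with the invariants $t(\tilde f)$, $l(\tilde f)$, $L(f)$ and $\sigma(M^4)$. The signature term $3\sigma(M^4)$ comes, as in Hughes--Melvin, from the Hirzebruch signature formula applied to the closed-up $4$-manifold and the relation $\langle p_1,[M]\rangle = 3\sigma(M)$ together with the fact that the first Pontryagin number measures precisely the obstruction to stably framing $TM^4$ rel boundary when the boundary framing is the one coming from $f\subset \R^5$; the factor $3$ and the $\tfrac12$ are inherited from $\pi_3(SO)\cong\Z$ and the index $2$ of the relevant subgroup. The triple-value term $3t(\tilde f)$ and the linking term $-3l(\tilde f)$ arise from resolving the self-intersections of $g(M^4)$ in $\R^6_+$: each triple value contributes $\pm 3$ (three pairwise linkings of the sheets, each normalized to the generator), and $l(\tilde f)$ records the global linking of $g(M^4)$ with the pushoff $\Sigma'(\tilde f)$ of the singular-value curve, which is the "defect" caused by $\Sigma(\tilde f)$ not being empty. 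Finally $L(f)$ is the purely boundary contribution: it is the linking of $f({\bf S}^3)$ with the pushoff $D'(f)$ of its double points, which is exactly the correction needed because the normal framing $(v_1,v_2)$ of $f$ used to trivialize things on $\partial M^4$ is not the restriction of a framing extending over $D(\tilde f)$. Collecting (i)+(ii)+(iii) with their established multiplicities gives $\Omega(f)=\pm\tfrac12(3\sigma(M^4)+3t(\tilde f)-3l(\tilde f)+L(f))$.

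I expect the main obstacle to be the precise compatibility of orientations and signs among the four invariants, i.e.\ checking that the triple-value and linking conventions of Definitions \ref{d:l} and \ref{d:L} combine with the chosen "boundary convention" so that a \emph{single} overall $\pm$ suffices (the sign itself being left ambiguous here, to be pinned down in section \ref{s:eszcomp}). A secondary technical point is the behaviour near $\partial\R^6_+$: one must verify that the condition $\tilde f^{-1}(\partial\R^6_+)=\partial M^4$ and nonsingularity near the boundary guarantee that all strata $\Sigma(\tilde f)$, the triple values, and $D(\tilde f)$ stay in the open half-space, so that their linking numbers in $(\R^6_+,\partial\R^6_+)$ are well defined and the collar contribution is entirely captured by $L(f)$. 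Both of these are handled in \cite{ESz}; my proof would mostly consist of organizing their argument and invoking Theorem \ref{th:HM} and formula \eqref{eq:cuspos} to fix the normalization of the signature and cusp terms, rather than recomputing the homotopy-theoretic obstructions from scratch.
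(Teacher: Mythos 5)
The first thing to say is that the paper contains no proof of this statement: Theorem \ref{th:ESz} is quoted verbatim from Ekholm--Sz\H{u}cs \cite{ESz} as one of three background formulae reviewed in section \ref{s:ss}, and the only original contribution of the paper concerning it is the later resolution of the sign ambiguity (Theorem \ref{thm:new}), done not by re-examining the proof but by evaluating both sides of \eqref{eq:hurk} on explicit holomorphic examples (the cross cap and $A_1$). So your proposal is not in competition with any argument in the paper.

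Judged on its own terms, your text is an outline of the original Ekholm--Sz\H{u}cs argument rather than a proof: every substantive step --- the obstruction-theoretic decomposition of the normal framing data, the local contributions of the cusp, triple-value and double-value strata, and the compatibility of the orientation conventions of Definitions \ref{d:l} and \ref{d:L} --- is explicitly deferred to \cite{ESz}. The one place where this deference becomes a genuine gap is the normalization of the coefficients: invoking Theorem \ref{th:HM} and formula \eqref{eq:cuspos} can only calibrate the $\sigma(M^4)$ (and cusp) terms, since for an embedded Seifert surface in $\R^5$ the quantities $t$, $l$ and $L$ all vanish; the multiplicities $3$, $-3$, $1$ on $t(\tilde f)$, $l(\tilde f)$, $L(f)$ are precisely the content of \cite{ESz} and are asserted in your sketch ("with their established multiplicities") without derivation. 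Two smaller slips: the collar extension of an immersion $f$ with double points is an immersed, not embedded, product; and $L(f)$ is only defined after replacing $f$ by a generic (self-transverse) representative, a point the paper itself has to address via Proposition \ref{l:gen}(c). If your goal is a self-contained proof, the heart of the matter is exactly the local computations you propose to skip; if your goal is to match the paper, a bare citation of \cite{ESz} is what the paper actually does.
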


\section{Ekholm-Sz\H{u}cs formulae for holomorphic germs $\Phi$}\labelpar{s:eszcomp}

In this section, from a holomorphic deformation of $\Phi$ we
construct a singular Seifert surface,
and we express the topological
 summands of (\ref{eq:hurk}) in terms of
holomorphic invariants. As a corollary we specify the
sign in the formulae (\ref{eq:hm}), (\ref{eq:cuspos}) and (\ref{eq:hurk}).

\subsection{Singular Seifert surface associated with an analytic deformation}\labelpar{ss:assoc} \

Let $ \Phi: (\C^2, 0) \to (\C^3, 0) $ be a holomorphic germ singular only at the origin and let $ f: S^3 \looparrowright S^5 $ be the immersion associated with $ \Phi $. We take an $ \epsilon $ as in Corollary~\ref{cor:epsilon}, that is, we fix in the target a ball $B^6_\epsilon$.
We also consider a holomorphic  generic deformation $ \Phi_{\lambda} $ of $ \Phi_0 = \Phi $,
and we fix $\lambda$ sufficiently small,  $ 0 < |\lambda| \ll \epsilon $, such that
 the cross caps and (if $ T( \Phi) < \infty$) the triple points of $ \Phi_{\lambda} $ sit in $ B^6_{\epsilon} $.
We set
$ \mathfrak{B}^4_{\epsilon, \lambda} := \Phi_{\lambda}^{-1} (B^6_{\epsilon}) $, it is a $C^\infty$
 non--metric ball in $\C^2$. Its  boundary is  $ \mathfrak{S}^3_{\epsilon, \lambda} :=
 \Phi_{\lambda}^{-1} (S^5_{\epsilon}) $,  it is canonically diffeomorphic to $ S^3 $.

\vspace{2mm}


%
The map $\Phi_\lambda$ is generic as a holomorphic map, but it is not
generic as a $C^\infty$ map. The $C^\infty$ genericity is obstructed by its cross cap points.
We will modify $\Phi_\lambda$ in the neighborhood of these points according to the
following local model.

Let us fix local holomorphic coordinate systems in the source and the target  such that
$\Phi_\lambda$ in the neighborhood of a cross cap has local equation $ \Phi^{loc} (s, t) = (s^2, st, t)$.
We consider its real smooth deformation (with $0\leq \tau \ll |\lambda|$):
\begin{equation}\label{eq:whpert}
\Phi^{loc}_\tau(s, t)=(s^2 + 2 \tau \bar{s}, st + \tau \bar{s}, t).
\end{equation}
Since the restriction of $\Phi^{loc}$ near the boundary of the local 4-ball is stable,
by a $C^\infty$ bump function the local deformation can be glued to the trivial deformation of
$\Phi_\lambda$ outside of local neighborhoods of the cross caps. This gives a $C^\infty$ global
deformation $\Phi_{\lambda,\tau}$ of $\Phi_\lambda$ and $\Phi$.
The map
$ \tilde{f}=\Phi_{\lambda,\tau}:  (\mathfrak{B}^4_{\epsilon, \lambda}, \mathfrak{S}^3_{\epsilon, \lambda} )
  \to (B^6_\epsilon, S^5_\epsilon) $ is the  singular Seifert surface we will consider.
Its restriction,  $ f_{\lambda} = \Phi_{\lambda,\tau} |_{ \mathfrak{S}^3_{\epsilon, \lambda}}
 =\Phi_{\lambda}|_{ \mathfrak{S}^3_{\epsilon, \lambda}}$ is
  the immersion associated with $ \Phi_{\lambda} $.

\begin{prop}\label{l:gen} \

(a)  $ \tilde{f} : {\mathfrak B}^4_{\epsilon,\lambda} \to \C^3 $
is a generic smooth map, nonsingular near the boundary.

 (b) $ f_{\lambda} $ is a generic immersion and it is regular homotopic to $ f$.

 (c) If $ f $ is a generic immersion, then $ f_{\lambda} $ is regular homotopic to $f$ through generic immersions. In this case $ L(f) = L ( f_{\lambda}) $.
\end{prop}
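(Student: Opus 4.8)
\textbf{Proof proposal for Proposition~\ref{l:gen}.}

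The plan is to verify each of the three assertions by a combination of local analysis near the cross caps and a global transversality argument away from them. For part (a), I would argue that genericity is a local and open property, so it suffices to check it separately on two regions: a neighborhood of the (finitely many) cross cap points of $\Phi_\lambda$, and the complement. Away from the cross caps, $\Phi_{\lambda,\tau}$ coincides with $\Phi_\lambda$ for $\tau$ small (by the bump function construction), and $\Phi_\lambda$ as a holomorphic map immersive off its cross caps has only self-transverse double values and isolated triple values; since holomorphic self-intersections are automatically transverse in the $C^\infty$ sense and triple points are in general position, $\Phi_\lambda$ is already $C^\infty$-generic there. Near each cross cap the map has the normal form $\Phi^{loc}_\tau(s,t)=(s^2+2\tau\bar s, st+\tau\bar s, t)$ of \eqref{eq:whpert}; I would compute the singular locus of this model — a direct Jacobian computation shows that for $\tau\neq 0$ the map $\Phi^{loc}_\tau$ is an immersion (the cross cap has been resolved), and that its image has only generic double and (possibly) triple points in a small ball — so the resolved model is $C^\infty$-stable, hence generic. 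Nonsingularity near the boundary of ${\mathfrak B}^4_{\epsilon,\lambda}$ follows because the cross caps were arranged to lie in the interior (in $B^6_\epsilon$, with $|\lambda|\ll\epsilon$), and the restriction of $\Phi_\lambda$ near $\mathfrak{S}^3_{\epsilon,\lambda}$ is the stable immersion $f_\lambda$, which the bump function leaves unchanged.

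For part (b): the identity $f_\lambda=\Phi_{\lambda,\tau}|_{\mathfrak{S}^3_{\epsilon,\lambda}}=\Phi_\lambda|_{\mathfrak{S}^3_{\epsilon,\lambda}}$ is built into the construction (the deformation $\tau$ is supported near the interior cross caps). That $f_\lambda$ is a generic immersion is the boundary case of (a). For the regular homotopy $f_\lambda\simeq f$, I would use the holomorphic family $\{\Phi_{\lambda'}\}_{\lambda'\in[0,\lambda]}$: for each $\lambda'$ one gets the associated immersion $\Phi_{\lambda'}|_{\Phi_{\lambda'}^{-1}(S^5_\epsilon)}$ of a sphere canonically diffeomorphic to $S^3$ (Lemma~\ref{cor:epsilon} applies uniformly in $\lambda'$ since $\Phi_{\lambda'}$ is singular only at points mapping into the interior), and this gives a continuous path of immersions $S^3\looparrowright S^5$, i.e. a regular homotopy. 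One small technical point is to make the identifications $\Phi_{\lambda'}^{-1}(S^5_\epsilon)\cong S^3$ depend continuously on $\lambda'$; this follows from the isotopy statement of Lemma~\ref{cor:epsilon} together with Ehresmann-type triviality of the family over the parameter interval, once $\epsilon$ is chosen to work for the whole (compact) family $\{\Phi_{\lambda'}\}$.

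For part (c): if $f$ itself is a generic immersion, then the regular homotopy constructed in (b) connects two generic immersions; I would perturb it (rel endpoints) to be generic, so that it passes through generic immersions except at finitely many moments where a codimension-one degeneracy (a tangential double point, a quadruple point, etc.) occurs. The invariant $L$ of Definition~\ref{d:L} is known to be invariant under regular homotopies through generic immersions and, more importantly, across these codimension-one walls — this is exactly the content of the fact that $L(f)$ depends only on the regular homotopy class for generic $f$, as established in \cite{ekholm3,ESz,saeki}. Hence $L(f)=L(f_\lambda)$. Alternatively, and perhaps more cleanly, one can observe that the path $\{\Phi_{\lambda'}\}$ through \emph{holomorphic} maps keeps all double-point sets complex-analytic, so the relevant double-value $1$-manifolds $D(f_{\lambda'})$ and their pushoffs vary by an ambient isotopy of $S^5$, which preserves linking numbers; this gives $L(f_{\lambda'})$ constant directly.

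The main obstacle I anticipate is part (a), specifically the claim that the resolved local model $\Phi^{loc}_\tau$ is genuinely $C^\infty$-generic (stable) in a neighborhood of the former cross cap, and that gluing via a bump function does not create new non-generic phenomena in the overlap region. One must check that the bump-function interpolation between $\Phi^{loc}_\tau$ and $\Phi_\lambda$ remains an immersion (no new singularities) and that its image self-intersections stay transverse throughout the annular gluing region — this is where a careful, though routine, estimate using the smallness of $\tau$ relative to $\lambda$ and the stability of $\Phi^{loc}$ near the boundary of the local ball is needed. Everything else is either built into the construction or follows from standard singularity-theory facts and the invariance properties of $L$ cited above.
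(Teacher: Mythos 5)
Your overall decomposition (local analysis near the cross caps, global transversality away from them, openness for part (c)) matches the paper's, but your local analysis in part (a) contains a genuine error: you claim that for $\tau\neq 0$ the perturbation $\Phi^{loc}_\tau(s,t)=(s^2+2\tau\bar s,\ st+\tau\bar s,\ t)$ is an immersion, i.e.\ that the cross cap is ``resolved''. It is not, and it cannot be. Computing the kernel of the real differential (the conditions are $v_t=0$, $2sv_s+2\tau\overline{v_s}=0$, $tv_s+\tau\overline{v_s}=0$) shows that $\Phi^{loc}_\tau$ has a whole circle $\{s=t,\ |s|=\tau\}$ of corank--one singular points; these are fold ($\Sigma^{1,0}$) points, and this is exactly the expected generic picture, since a generic smooth map of a $4$--manifold to $\R^6$ has a $1$--dimensional singular locus (the rank--$3$ stratum of $\Hom(\R^4,\R^6)$ has codimension $3$), not an empty one. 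Moreover the isolated complex cross cap carries a nonzero local obstruction in $\pi_3(V_4(\R^6))\cong\Z$ (precisely its contribution $+1$ to $\Omega_\C$), so no perturbation supported near the singular point can turn it into an immersion. This is not a technicality for the rest of the paper: the image of the fold circle is the curve $\Sigma(\tilde f)$ bounding the double--value surface whose push--off defines $l(\tilde f)$, and the subsequent computation $l(\tilde f)=C(\Phi)\neq 0$ would be impossible if $\tilde f$ were an immersion. The correct statement, verified by the explicit computation of Section \ref{ss:lwh}, is that the perturbation spreads the non--generic isolated singular point into a generic circle of folds with no cusps.

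Two smaller points. In (b) the paper simply takes the regular homotopy $h\mapsto \Phi_{h\lambda}|_{\mathfrak{S}^3_{\epsilon,\lambda}}$ on the \emph{fixed} sphere $\mathfrak{S}^3_{\epsilon,\lambda}$, which sidesteps your worry about continuously identifying the varying domains $\Phi_{\lambda'}^{-1}(S^5_\epsilon)$. In (c) your first argument asserts that $L$ is unchanged across the codimension--one walls of a generic regular homotopy, i.e.\ that $L$ depends only on the regular homotopy class; that is a much stronger statement than what is cited (and would make the clause ``through generic immersions'' in the statement vacuous). The paper's route is more economical: being a generic immersion is an open condition, so for $|\lambda|$ small the entire path $h\mapsto f_{h\lambda}$ consists of generic immersions, and one then only needs the invariance of $L$ along regular homotopies \emph{through generic immersions}, which is what \cite{ekholm3} provides.
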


\begin{proof}
(a) First one checks that the local $\Phi^{loc}_\tau$
 is generic. This follows from the  computation from
 section~\ref{ss:lwh}. 
  Its most complicated singularities are $ \Sigma^{1, 0} $ (fold) points,
  the singular values constitute  an $ S^1 $, which -- together with the double values of the image
  of the boundary of the local ball -- bounds the $2$-manifold of the double values. Cf.
  \cite[2.3.]{ESz}.

In the complement of local balls $\Phi_{\lambda,\tau} $ agrees with $ \Phi_{\lambda} $, hence it has only
simple points, self-transverse double points and isolated triple points. All of them are generic.
Hence $\tilde{f}$ has all the local property of a generic map (and, in fact, this is enough
in the determination of all the invariants, cf. \cite{ESz}).

(b) $\Phi_\lambda|_{ \mathfrak{S}^3_{\epsilon, \lambda}} $ is generic in real sense too:
 it has only simple points and generic self-transversal double points.
 $\Phi_{h\lambda}|_{ \mathfrak{S}^3_{\epsilon, \lambda}} $
 is a  regular homotopy between $ f $ and $ f_{\lambda} $ ($ h \in [0, 1] $).

(c) Being a generic immersion is an open condition (cf. \cite[2.1.]{ESz}).
Furthermore, $ L $ is constant along a regular homotopy through generic immersions, cf. \cite{ekholm3}.
\end{proof}

Next,  we return back to the formula (\ref{eq:hurk}), applied for $\tilde{f}$.
  Clearly, $\sigma(M^4)=0$.

\begin{thm} \

(a) $ t( \tilde{f}) = T ( \Phi_{\lambda}) $  (cf. \ref{ss:trip}).

(b)  $ l(  \tilde{f} ) = C (\Phi) $.

(c) $ L(f_{\lambda}) = C( \Phi) - 3 T ( \Phi_{\lambda}) $.

\noindent
In particular, if\, $T(\Phi)<\infty$, hence\, $ T ( \Phi_{\lambda}) $ is independent of the deformation
$ \Phi_{\lambda}$, then $ t( \tilde{f}) = T ( \Phi) $
 and $ L(f_\lambda ) = C( \Phi) - 3 T ( \Phi)$ is also independent of the deformation.
\end{thm}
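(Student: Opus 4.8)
The plan is to prove (a), (b), (c) by \emph{localizing} each of the three invariants at the cross caps and triple points of $\Phi_\lambda$. Outside small disjoint balls around these points the map $\tilde f=\Phi_{\lambda,\tau}$ coincides with the holomorphic immersion $\Phi_\lambda$, which is as non-degenerate as a $C^\infty$ map can be: there $\tilde f$ is singularity-free, has no triple values, and its double value set is the one complex dimensional double point curve of $\Phi_\lambda$. Consequently each of $t(\tilde f)$, $l(\tilde f)$ and $L(f_\lambda)$ splits as a sum of contributions supported near the cross caps and the triple points. A recurring point is that the triple point contributions carry a \emph{definite} sign: at a triple value three complex sheets of $\tilde f(M^4)$ meet transversally in $\C^3$, and a frame assembled from complex-linear data always lies in $GL^+$ --- exactly as in Lemma~\ref{lem:cr} and the proof of Theorem~\ref{th:Comp}.

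For part (a) I would first note that the local cross cap perturbation $\Phi^{loc}_\tau$ of (\ref{eq:whpert}) has only fold ($\Sigma^{1,0}$) singularities and no triple values (part of the local computation of section~\ref{ss:lwh}); hence every triple value of $\tilde f$ is an ordinary triple point of $\Phi_\lambda$, and there are $T(\Phi_\lambda)$ of them. It then only remains to match the Ekholm--Sz\H{u}cs sign convention for a triple value --- the orientations of the three local sheets induced from the fixed orientation of $M^4$, together with the orientation of $\R^6=\C^3$ --- with the complex orientation; by the previous paragraph this sign is $+1$, giving (a). For part (b), the singular value set $\Sigma(\tilde f)$ is empty outside the cross cap balls, and inside the $i$-th ball it is precisely the fold circle of $\Phi^{loc}_\tau$ determined in section~\ref{ss:lwh}; thus $\Sigma(\tilde f)$ is a disjoint union of $C(\Phi)$ circles and its pushoff $\Sigma'(\tilde f)$ (Definition~\ref{d:l}) is supported in these balls. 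Because near a cross cap the double value locus --- hence the fold circle --- is itself local, both of its branches being branches of the local model, each component $\Sigma'_i$ bounds a small $2$-chain meeting $\tilde f(M^4)$ only inside the $i$-th ball; therefore $l(\tilde f)=\lk(\tilde f(M^4),\Sigma'(\tilde f))$ equals the sum over the $C(\Phi)$ cross caps of the local linking number of the image of $\Phi^{loc}_\tau$ with the pushed-off fold circle, which section~\ref{ss:lwh} evaluates to be $1$. Hence $l(\tilde f)=C(\Phi)$.

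For part (c) I would compute $L(f_\lambda)$ as an intersection number: $f_\lambda(\mathfrak S^3)$ bounds the singular complex surface $Z=\Phi_\lambda(\mathfrak B^4_{\epsilon,\lambda})\subset B^6_\epsilon$, so $L(f_\lambda)$ is the count of intersections of $Z$ with a $2$-chain in $B^6_\epsilon$ bounding the pushoff $D'(f_\lambda)$ of the double value set. Taking this $2$-chain to be a pushoff of the double value curve $\mathcal D_\lambda=\Phi_\lambda(D_\lambda)$ of $\Phi_\lambda$ itself (which lies on $Z$), pushed in the direction of the framing $N=v_1(x_1)+v_1(x_2)$, the new intersection points with $Z$ occur only near the cross caps, where the image folds, and near the triple points, where a third sheet is present. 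The cross cap contribution is again local and equals $+C(\Phi)$, for the same reason as in (b); the triple point contribution is $-3T(\Phi_\lambda)$, coming from the three branches of $\mathcal D_\lambda$ at a triple value pushed along $N$ across the complementary sheets. Adding these gives $L(f_\lambda)=C(\Phi)-3T(\Phi_\lambda)$; the last assertion of the theorem is then immediate, since $T(\Phi_\lambda)=T(\Phi)$ is deformation-independent when $T(\Phi)<\infty$ (see \ref{ss:trip}), and likewise $t(\tilde f)=T(\Phi)$. An alternative (and shorter) route to (c), at the price of carrying a sign through one example, is to feed parts (a), (b), $\sigma(\mathfrak B^4)=0$ and the value $\Omega(f)=-C(\Phi)$ of Theorem~\ref{th:main} into the Ekholm--Sz\H{u}cs formula (\ref{eq:hurk}), once the universal sign there has been fixed by computing $L$ directly for the cross cap $\Phi(s,t)=(s^2,st,t)$, where $C(\Phi)=1$, $T=0$, $L=1$.

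The main obstacle is precisely the triple point computation in (c). Unlike (a) and (b), where complex orientations do all the work, the pushoff defining $L$ uses a \emph{real} normal framing of $f_\lambda$, so near a triple value one must compare $N$ with the complex normal directions of the three sheets and track the resulting discrepancy, which is what produces the factor $-3$; this is the one step where either an explicit local model of an ordinary triple point, or the cross cap example together with (\ref{eq:hurk}), appears to be indispensable.
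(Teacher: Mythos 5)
Your overall skeleton --- localize $t$, $l$, $L$ at the cross caps and triple points of $\Phi_\lambda$, use complex orientations to get the sign $+1$ in (a), and bootstrap (c) through the Ekholm--Sz\H{u}cs formula (\ref{eq:hurk}) together with $\Omega(f)=-C(\Phi)$ --- is exactly the paper's strategy, and parts (a) and the localization step $l(\tilde f)=\pm C(\Phi)$, $L$ supported near cross caps and triple points, are fine. The genuine gap is in your treatment of signs. You assert that section~\ref{ss:lwh} evaluates the local linking number at a cross cap to be $1$ and that $L=1$ for the cross cap; but that computation only yields $l=\pm 1$ and $L=\pm 1$. The paper is explicit that the orientation conventions entering Definitions~\ref{d:l} and \ref{d:L} (orientation of $D(g)$, of its boundary, the outward normal, the linking convention in $(\R^6_+,\partial\R^6_+)$) are ``not immediate even in simple cases,'' and it deliberately computes these local numbers only up to sign. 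Determining those signs directly is precisely the hard content you are implicitly assuming away.

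This matters because your fallback route for (c) then underdetermines the answer. Writing $l=\epsilon_1 C$, $\Omega'=\delta C$ for the unknown signs, the up-to-sign formula gives $L=(2\delta+3\epsilon_1)C-3T$; the cross cap data $C=1$, $T=0$, $L=\pm1$ forces $2\delta+3\epsilon_1=\pm1$, which still admits \emph{two} solutions, $(\delta,\epsilon_1)=(-1,1)$ with $L=C-3T$ and $(\delta,\epsilon_1)=(1,-1)$ with $L=-C-3T$. The paper breaks this residual ambiguity with a second example, the $A_1$ singularity $\Phi(s,t)=(s^2,t^2,st)$, where $C=3$, $T=1$ and a direct computation gives $L=0$ --- a datum with no sign ambiguity --- which forces $2\delta+3\epsilon_1=1$ and hence all of $l(\tilde f)=C(\Phi)$, $L(f_\lambda)=C(\Phi)-3T(\Phi_\lambda)$ and $\Omega'=-C(\Phi)$ at once. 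Your primary route for (c), the direct $-3$ per triple point from pushing the double curve off along the real normal framing, is also not carried out (as you acknowledge), so as written the proposal establishes (a) and $l=\pm C$, $L=\pm C-3T$, but not the actual signs claimed in (b) and (c). Either supply a genuinely convention-tracking computation of the local $l$ and $L$ at the cross cap, or add a second, sign-free example such as $A_1$.
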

\begin{thm}\label{thm:new} With our sign--convention, if in the left hand side of the formulae
(\ref{eq:hm}), (\ref{eq:cuspos}) and (\ref{eq:hurk}) we put the sign--refined Smale invariant
$\Omega^a(f)$, then
the formulae are valid if we put the positive sign on the right hand sides.

In particular, the validity of these sign--corrected formulae (e.g., $\Omega^a(f)=\frac{3}{2}\sigma(M^4)$)
is independent of the `boundary convention': changing the boundary convention changes the sign in both 
sides of the formulae simultaneously. 
\end{thm}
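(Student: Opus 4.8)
The plan is to fix the sign once and for all using the single explicit holomorphic example whose complex Smale invariant we already know — the cross cap $\Phi(s,t)=(s^2,st,t)$ — and then propagate it through the three formulae. The key point is that all three formulae of Section~\ref{s:ss} hold up to a \emph{global} sign (one sign for the whole family of immersions, not a sign depending on $f$), because the left side is linear in $\Omega(f)$ and the right sides are additive invariants; so it suffices to verify the correct sign on one well-chosen pair $(f,M^4)$.

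First I would record that for $\Phi$ the cross cap, Proposition~\ref{prop:u} gives $\Omega_\C(\Phi)=[u]$, hence $C(\Phi)=1$ by Theorem~\ref{th:Comp}, and therefore (combining Theorem~\ref{th:main}(b) with the sign bookkeeping of \ref{ss:sign}, i.e. $\pi_3(\tau)([u])=-[L]$ and the isomorphism $\pi_3(j)$) the sign-refined Smale invariant of the associated immersion is $\Omega^a(f)=-C(\Phi)=-1$. Wait — that is exactly a unit; I would instead take a germ with a cleanly computable Seifert signature, e.g.\ use the construction of \ref{ss:assoc} applied to a germ $\Phi$ with, say, $C(\Phi)=k$ and $T(\Phi)=0$ (the family $\Phi_{-k}$ of Example~\ref{ex:1} works, or $A_{k-1}$). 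For such a $\Phi$ the singular Seifert surface $\tilde f=\Phi_{\lambda,\tau}$ of \ref{ss:assoc} has $\sigma(M^4)=0$, $t(\tilde f)=T(\Phi_\lambda)=0$, $l(\tilde f)=C(\Phi)=k$ and $L(f_\lambda)=C(\Phi)-3T(\Phi)=k$ by the preceding theorem, while $\Omega^a(f)=-C(\Phi)=-k$ by Theorem~\ref{th:main}(b). Plugging into (\ref{eq:hurk}):
\[
\Omega^a(f)=\varepsilon\cdot\tfrac12\bigl(3\cdot 0+3\cdot 0-3k+k\bigr)=\varepsilon\cdot(-k),
\]
forcing $\varepsilon=+1$, i.e.\ the positive sign. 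This simultaneously pins down the sign in (\ref{eq:cuspos}) (take the same $\tilde f$ but mapped into $\R^5$ via the fold picture, where $\#\Sigma^{1,1}(\tilde f)$ is controlled by the same local models and the relation $\#\Sigma^{1,1}=3t-3l+L$ from the Ekholm–Sz\H ucs comparison, or directly re-run the computation with an embedded Seifert surface) and in (\ref{eq:hm}) (apply to a genuine holomorphic embedding with nontrivial signature, e.g.\ deform to an embedded Seifert surface of a suspended Brieskorn-type link whose signature is computable, so that $\Omega^a(f)=\varepsilon\cdot\tfrac32\sigma$ again determines $\varepsilon=+1$).

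For the second assertion — independence of the boundary convention — I would argue as follows: the left side $\Omega^a(f)$ was \emph{defined} (see the long discussion in \ref{ss:Smale}) by identifying the oriented abstract ${\bf S}^3$ with the embedded $S^3\subset\R^5$ via an orientation-preserving diffeomorphism, and changing the boundary convention reverses this identification, hence changes $\Omega^a(f)$ by a sign. On the right side, all the invariants $\sigma(M^4)$, $\#\Sigma^{1,1}(\tilde f)$, $t(\tilde f)$, $l(\tilde f)$, $L(f)$ are defined using the orientation of $M^4$ and of the ambient $\R^5$ or $\R^6_+$; reversing the boundary convention amounts to reversing the orientation of $M^4$ (equivalently of its interior relative to the fixed $\R^5$-orientation), which multiplies each of these by $-1$ (signature changes sign under orientation reversal; the signs of cusps, triple values, and the linking numbers all flip). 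Hence both sides of each sign-corrected formula change by the same sign and the identity is preserved; in particular the formula $\Omega^a(f)=\tfrac32\sigma(M^4)$ (and its analogues) is convention-free.

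The main obstacle I anticipate is the bookkeeping in the last paragraph: one must check carefully that \emph{every} summand on the right-hand side of (\ref{eq:hurk}) — especially the two linking numbers $l(\tilde f)$ and $L(f)$, whose sign definitions involve a shift along a normal/outward field in $D(g)$ — is odd under orientation reversal of $M^4$, rather than, say, insensitive to it. This requires unwinding the definitions in \cite{ESz} (Definitions~\ref{d:l}, \ref{d:L}) and confirming that the cooriented shift and the ambient linking pairing each contribute one sign change. Once that is verified, the two assertions follow immediately from the single computation above together with Theorem~\ref{th:main}(b) and the theorem preceding~\ref{thm:new}.
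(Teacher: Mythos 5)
Your overall strategy --- pin down the one universal sign by evaluating the formula on explicit holomorphic examples, using $\Omega^a(f)=-C(\Phi)$ from Theorem~\ref{th:main}(b) --- is exactly the paper's. But there is a circularity in your key step. You feed into (\ref{eq:hurk}) the exact values $l(\tilde f)=C(\Phi)$ and $L(f_\lambda)=C(\Phi)-3T(\Phi)$ ``by the preceding theorem''. In the paper those identities are not available as prior inputs: they are proved \emph{simultaneously} with Theorem~\ref{thm:new}, precisely because the direct computations (Section~\ref{s:calc}) determine $l$ and $L$ only up to sign --- the orientation conventions in Definitions~\ref{d:l} and \ref{d:L} are too delicate to evaluate with a definite sign even for the cross cap. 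If all you actually know is $l=\pm k$ and $L=\pm k$ (independent signs), your single example with $\sigma=t=0$ gives $-k=\varepsilon\cdot\frac{1}{2}(-3l+L)$, which has \emph{two} consistent solutions, $(\varepsilon,l,L)=(+1,k,k)$ and $(-1,-k,-k)$: the sign is not determined. The paper breaks this symmetry with a second example, $A_1$, where $T(\Phi)=1$ and $L=0$; the crucial point is that the sign of the triple-value count $t(\tilde f)$ \emph{is} known a priori (a holomorphic triple point counts $+1$ by the complex-orientation convention), so the summand $3t$ enters with a definite sign and forces $l=+C$, $L=+(C-3T)$ and $\varepsilon=+1$ all at once. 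Any example you use must have $T\neq 0$ (or some other right-hand summand of independently known sign) for the argument to close; a corank-one germ with $T=0$ cannot do the job on its own.

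Two smaller points. Your derivation of the signs in (\ref{eq:hm}) and (\ref{eq:cuspos}) from that of (\ref{eq:hurk}) is gestured at rather than carried out: the ``suspended Brieskorn-type link'' for (\ref{eq:hm}) is never produced, and the relation $\#\Sigma^{1,1}=3t-3l+L$ is asserted, not proved. The paper's route is lighter: each formula holds with one universal sign, and the formulae agree on common examples (e.g.\ push an embedded Seifert surface from $\R^5$ into $\R^6_+$, where $t=l=L=0$), so fixing one sign fixes the others. Your treatment of the boundary-convention statement is essentially the intended one, and you correctly identify the point that still needs checking, namely that $l$ and $L$ are odd under the relevant orientation reversal.
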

We prove both theorems simultanously (see also the discussion from subsection \ref{ss:Smale}). 
\begin{proof}

In the definitions of the invariants $t$, $l$ and $L$ one uses very specific sign/orientation
conventions, based on the orientation of the involved subspaces in their definition.

For a triple value, the sign is determined in such a way that it is $+1$ whenever the triple
value is obtained from a holomorphic triple point (hence the orientations agree with
the complex orientations).

Since in the local deformation $\Phi^{loc}_\tau$ we do not create any new triple value,
see e.g. the computation of section \ref{s:calc}, all the triple values of $\tilde{f}$
come from the complex triple points of the holomorphic $\Phi_\lambda$, hence (a) follows.

The proof of the remaining parts are based on computations of the invariants
$C(\Phi)$, $T(\Phi)$, $l(\tilde{f})$ and $L(f)$ for two concrete cases.
For the integers $l$ and $L$ the definitions (orientation conventions)
are not immediate even in simple cases. Therefore, in our computation
we determine them only up to a sign.
The point is that computing `sufficiently many' examples, the formula (\ref{eq:hurk}),
even with its sign ambiguity in front of the right hand side, and even with
the (new) sign ambiguities of the integers $l$ and $L$, determine uniquely all these signs.
(This also shows  that, in fact, there is a unique
universal way to fix the orientation conventiones and signs in the definitions of
$l$ and $L$ such that  (\ref{eq:hurk})
works universally.)

In section \ref{s:calc} we will determine the following data:
\begin{equation}\label{eq:CrC}\begin{split}
 \mbox{(i)}\ \mbox{For cross cup:} \ \ \ C(\Phi)=1, \ T(\Phi)=0, \ l=\pm 1, \ L=\pm 1.\\
\mbox{(ii)} \ \ \ \ \ \ \ \ \ \
\mbox{For $A_1$:} \ \ \ C(\Phi)=3, \ T(\Phi)=1, \ L=0.\ \ \ \ \ \hspace{1cm}
\end{split}
\end{equation}

(b)
The singular values of $ \tilde{f} $ are concentrated near the cross caps of $ \Phi_{\lambda} $.
For $\Phi^{loc}_\tau$ the value  $ l $ is $ \pm 1 $, see (i).
Since the sign is the same for all cross caps, $ l( \tilde{f})= \pm C( \Phi ) $.

We introduce the notation
\begin{equation}\label{eq:omvessz}
 \Omega' (f_{\lambda}) :=  \frac{1}{2} ( 3 t(\tilde{f}) - 3 l(\tilde{f}) + L(f_{\lambda})).
\end{equation}

$ \Omega'(f_{\lambda}) $ agrees with $ \Omega(f) $ up to sign, thus $ \Omega'(f_{\lambda}) =
\pm C( \Phi) $. Substituting this and  the data (i) of the cross cap in (\ref{eq:omvessz})
 we conclude that $ l(\tilde{f}) = - \Omega' (f_{\lambda}) $ and
  $ L(f_{\lambda}) = \pm C( \Phi ) -  3 T ( \Phi_{\lambda})$.

Next, using the date (ii) for  $ A_1 $, all the remaining sign ambuguities can be eliminated:
$ L(f_\lambda) = C( \Phi ) -  3 T ( \Phi_\lambda)$, $ l( \tilde{f})= C( \Phi ) $ and
$ \Omega'(f_{\lambda}) = - C( \Phi) = \Omega (f) $.

The universal signs in formulae
(\ref{eq:hm}), (\ref{eq:cuspos}) and (\ref{eq:hurk}) are related by common examples, hence
one of them determines all of them.
\end{proof}

\begin{remark}
The formula (\ref{eq:cuspos}), involving the (algebraic) number of real cusps of maps $g:M^4\to \R^5$
is the real analogue of our theorem $\Omega(f)=-C(\Phi)$, involving the
number of  cross caps of holomorphic deformations. This suggests
that if we replace a holomorphic deformation by a smooth generic map, then we
trade each cross cup by $-2$ real cusps.
\end{remark}

\section{Calculations. The proof of (\ref{eq:CrC}).}\labelpar{s:calc}

We show the main steps of the computations, with their help the reader can
fill in the details.
Note that if
the germ $ \Phi $ is weighted homogeneous, then $ \epsilon_0 =1 $ can be chosen.

\subsection{The case of cross cap.}\labelpar{ss:lwh}

For the computation of $T(\Phi)$ see e.g. \cite{Mond1,Mond2}; $C(\Phi)$ is clear. Next we compute $l$ and $L$.
Set $ \Phi(s, t)= (s^2, st, t) $ and the  smooth perturbation
$ \tilde{f}(s, t)=(s^2 + 2 \epsilon \bar{s}, st + \epsilon \bar{s}, t)$.
The singular locus is
$\tilde{ \Sigma }=  \{ (s,t) \ | \ s=t \mbox{ , } |s|=|t|= \epsilon \} \cong S^1$.

$ \tilde{f}|_{\Sigma}$ has no singular point, hence  $ \tilde{f}$  has no cusp points.
The most complicated singularities of $ \tilde{f} $ are $ \Sigma^{1, 0} $ (or fold) points.
The set of the double points of $ \tilde{f} $ is
\[ \tilde{D} =
\{ (s, t) \in \mathbb{C}^2 \ | \ (s - t)t + \epsilon ( \bar{s} - \bar{t}) = 0 \}
\setminus \{s=t\}.
\]
with the involution  $(s, t) \mapsto (s', t)= (2t-s, t)$.
The fix point set of the involution is $\{s=t\}$.
Each double point has exactly one pair with the same value, hence
$ \tilde{f} $ has no triple point.

 A parametrization of $ \tilde{D}$ is
$
 (\rho, \alpha) \mapsto ( -\epsilon e^{-2 \alpha  i } + \rho e^{i \alpha} , -\epsilon e^{-2 \alpha  i }),
$
where $ \rho \in \R_+ $, $ \alpha \in [0, 2 \pi) $.

The parametrization shows that the closure of $ \tilde{D} $ is a M\"{o}bius band.
For $ \rho=0 $ we get $ \tilde{\Sigma} $, which is the midline of the M\"{o}bius band.
 The set of double values is
\begin{align*} D= \tilde{f} (\tilde{D}) &= \{ (s^2 + 2 \epsilon \bar{s}, st + \epsilon \bar{s}, t) \ | \ (s, t) \in \tilde{D} \} \\
                            &= \{ ( \rho^2 e^{2 i \alpha}+ \epsilon^2 e^{2 i \alpha } ( e^{-6 i \alpha }-2 ),
\epsilon^2 ( e^{ - 4 i \alpha } - e^{ 2 i \alpha } ) ,
-\epsilon e^{-2 \alpha  i } ) \ | \ \rho \in \R_+ \mbox{ , } \alpha \in [0, 2 \pi) \}.
\end{align*}
Writing $ \rho = 0 $ we get the singular values of $ \tilde{f} $,
\[ \Sigma= \tilde{f} (\tilde{\Sigma} ) =
\{ ( \epsilon^2 e^{2 i \alpha } ( e^{-6 i \alpha }-2 ),
\epsilon^2 ( e^{ - 4 i \alpha } - e^{ 2 i \alpha } ) ,
-\epsilon e^{-2 \alpha  i } ) \} \mbox{ .}
\]

The inward normal field of $ \Sigma  $ in $ D $ is the derivative of the curve
\[ \gamma(t)=  (t e^{2 i \alpha} + \epsilon^2 e^{2 i \alpha } ( e^{-6 i \alpha }-2 ),
\epsilon^2 ( e^{ - 4 i \alpha } - e^{ 2 i \alpha } ) ,
-\epsilon e^{-2 \alpha  i } )  \]
at $ t=0 $, that is $\gamma'(t)|_{t=0} = (e^{2 i \alpha}, 0, 0)$.
The pushing out of $ \Sigma$ (cf. Definition~\ref{d:l}) is
\[ \Sigma'= \Sigma - \delta \cdot \gamma'(t)|_{t=0}
= \{ ( - \delta e^{2 i \alpha } + \epsilon^2 e^{2 i \alpha } ( e^{-6 i \alpha }-2 ),
\epsilon^2 ( e^{ - 4 i \alpha } - e^{ 2 i \alpha } ) ,
-\epsilon e^{-2 \alpha  i } ) \} \mbox{ ,}
\]
where $ 0 < \delta \ll \epsilon $.
By Definition~\ref{d:l} we need the linking number of $ \tilde{f} ( \R^4) $ and $ \Sigma' $ in $ \R^6 $. To calculate it we fill in $ \Sigma' \cong S^1 $ with a `membrane', which here will be   the disc
\[ H= \{ (- \delta w + \epsilon^2 (\bar{w}^2-2 w), \epsilon^2 (\bar{w}^2-w), - \epsilon \bar{w} ) \ | \ w \in \C \mbox{ , } |w| \leq 1 \}.
\]
$l(\tilde{f})$ is the algebraic number of the intersection points of $ H $ and $ \tilde{f} ( \R^4) $.
The  only solution is $ w=0$, $ (s, t)=(0,0) $, and the
intersection at this point is transversal.
Hence, for the smooth perturbation $ \tilde{f} $ of the cross cap $ l( \tilde{f}) = \pm 1 $.

Next we compute $L$.
The set of the double points of $ \Phi $ is
$\tilde{D}= \{ (s, 0) \ | \ s \neq 0 \} \subset \mathbb{C}^2$.

The set of the double values is
$
D= \Phi(\tilde{D}) = \{ (s^2, 0, 0) \ | \ s \neq 0 \} \subset \mathbb{C}^3$,
and the set of the double values of $ f $ is
$
 D_f = D \cap S^5 = \{ (s^2, 0, 0) \ | \ | s | =1 \} \subset S^5$.

The sum of the normal vectors at $ (s^2, 0, 0 ) $ is $(0,0,\bar{s}^2)$. Hence
the shifted copy of $ D $ along $ N $ is
$D'= D_f + \delta N = \{ (s^2, 0, \delta \bar {s}^2 ) \ | \ | s | =1 \}$.

Since $ D' $ does not intersect $ \Phi (\C^2 ) $ for $ \delta \in (0, 1] $,
 we can choose $ \delta = 1 $.
An injective parametrization of $ D_f + \delta N $ is
$
 D' = \{ (z, 0,  \bar {z} ) \ | \ | z | =1 \},
$
where $ z=s^2 $.
To calculate the linking number of $ \Phi( \C^2 ) $ and $ D' $ in $ \R^6 $, we need a
membrane which fills in $ D $. We take
\[
 H = \{ (z, \sqrt{1-|z|^2},  \bar {z} ) \ | \ | z | \leq 1 \} \cong D^2 \mbox{ .}
\]
$L(f)$ is the algebraic number of the intersection points of $ \Phi( \C^2 ) $ and $ H $.
But there is only one such point, namely
$P:= \Phi ( \sqrt{\xi}, \xi) = ( \xi, \xi \sqrt{\xi}, \xi )$,
where $\xi$ is the real root of  $ g(z) :=z^3 + z^2 -1 =0 $.
Moreover, this intersection is transversal.

\subsection{The $A_1$ singularity}\labelpar{ss:A} By \ref{ex:A} it is
given by $ \Phi_{0} (s, t) = (s^2, t^2, st) $. The immersion $ f_0$ associated with $ \Phi_0 $ is not generic, $f_0$ is the $2$-fold covering of the projective space composed with the inclusion. Thus all points of $ S^3 $ are double points of the immersion $ f$.

On the other hand, by \ref{ex:A}, $ C( \Phi_0 ) = 3 $, and a similar calculation of the codimension of the second fitting ideal shows that $ T( \Phi_0) = 1 $. The finiteness of these invariants shows that the number of cross caps and  triple points of a generic deformation of $ \Phi_0 $ are independent
of the chosen deformation. Below we give a concrete deformation $ \Phi_{\epsilon}$ of $ \Phi_0 $ and we calculate the invariant $ L $ of the generic immersion $ f_{ \epsilon} $ associated with $\Phi_{\epsilon}$.

The deformation is $\Phi_{\epsilon}(s, t) = ((s- \epsilon ) s , (t- \epsilon ) t , st ) $.
The vector field
\[ \tilde{N} (s, t) = \overline{  \partial_s \Phi_\epsilon (s, t) \times \partial_t \Phi
_\epsilon (s, t) } =
\left( \begin{array}{c}
\bar{t} (2 \bar{t} - \epsilon ) \\
- \bar{s} (2 \bar{s} - \epsilon ) \\
(2 \bar{s} - \epsilon ) (2 \bar{t} - \epsilon )
\end{array} \right) \mbox{ .} \]
is $ 0 $ at the points $(0, \epsilon /2 )$, $( \epsilon/2, 0 ) $ and $( \epsilon/2, \epsilon/2 ) $. These  are the cross caps.

The defining equation $ \Phi_\epsilon (s, t) = \Phi_\epsilon (s', t' ) $ (where $(s, t) \neq (s', t') $) of the double points leads to the system of equations
\[
 (s-s')(s+s'- \epsilon ) = 0, \ \
 (t-t')(y+y' - \epsilon ) = 0, \ \
 st=s't'.
\]
Thus the double locus $ \tilde{D} $  has three parts and these parts correspond to the three cross caps.
The first part comes from the solution $ s'=s $ and $ t'= \epsilon - t $, which implies $ s=0 $,
hence $
 \tilde{D}_1 = \{ (0, t) \ | \ t \neq \epsilon/2 \}$
with $ \Phi_\epsilon (0, t) = \Phi_\epsilon (0, \epsilon - t) $. This provide the double value set
\[
 D_1 = \Phi_\epsilon ( \tilde{D_1} ) = \{ (0, t (t- \epsilon ) , 0) \ | \ t \neq  \epsilon/2 \}.
\]
The second part comes from the solution $ s'= \epsilon - s $ and $ t'=  t $, which implies $ t=0 $,
and $
 \tilde{D}_2 = \{ (s, 0) \ | \ s \neq \epsilon/2 \}$
with $ \Phi_\epsilon (s, 0) = \Phi_\epsilon (\epsilon - s, 0) $. The set of double values is
\[
 D_2 = \Phi_\epsilon ( \tilde{D_2} ) = \{ (s (s- \epsilon ) , 0,  0) \ | \ s \neq  \epsilon/2 \}.
\]
The third part comes from the solution $ s'= \epsilon - s $ and $ t'= \epsilon - t $, which implies $ s + t = \epsilon $, and
$
 \tilde{D}_3 = \{ (s, \epsilon - s)\} \ | \ s \neq \epsilon/2 \}$
with $ \Phi_\epsilon (s, \epsilon - s) = \Phi_\epsilon (\epsilon - s, s) $. The set of double values is
\[
 D_3 = \Phi_\epsilon ( \tilde{D_3} ) = \{ (s (s- \epsilon ) , s (s- \epsilon ),  -s (s- \epsilon )) \ | \ s \neq  \epsilon/2 \}.
\]
$ D_1 $, $ D_2 $ and $ D_3 $ intersect each other in the unique triple value
$ \Phi_\epsilon (0, 0) = \Phi_\epsilon ( \epsilon, 0) = \Phi_\epsilon (0, \epsilon ) = (0, 0, 0) $.

Let $ D_i(f) = D_i \cap S^5 $ ($i=1, 2, 3$) denote the disjoint components of the set of the double
values of $ f$. Clearly $ L(f) = L_1(f) + L_2 (f) + L_3 (f) $, where $ L_i (f) $ is the linking number corresponding to the component $ D_i(f) $.
But $ L_1(f)=L_2(f)=L_3(f)$. Indeed,
$ D_1 $ and $ D_2 $ is interchanged via the
 transformations $ \phi (s, t) = (t, s) $ (of $\C^2$)
  and $ \psi (X, Y, Z) = (Y, X, Z) $ (of $\C^3$), and
   $ D_3 $ and $ D_2 $ via $ \phi(s, t) = ( \epsilon - s - t, t) $ and $ \psi (X, Y, Z) = (X+Y+2Z, Y, -Y-Z)$. Thus, it is enough to calculate $ L_1(f) $.
The needed vector field along $ D_1 $ is
\[ N (0, t (t- \epsilon ), 0) = \tilde{N} (0, t) + \tilde{N} (0, \epsilon - t) =
((2 \bar{t} - \epsilon )^2,0,0).\]
The set of the double values of $f$  corresponding to $D_1$ is
\[
 D_1 (f) = D_1 \cap S^5 = \{ (0, t (t- \epsilon ) , 0) \ | \ |t (t- \epsilon )| = 1 \} \mbox{ .}
\]
The shifted  $ D_1 (f) $ along $ N$ is
\[
D'_1 = D_1 (f) + \delta N = \{ (\delta (2 \bar{t} - \epsilon )^2, t (t- \epsilon ) , 0) \ | \ |t (t- \epsilon )| = 1 \} \mbox{ ,}
\]
where $ \delta $ is small enough. Nevertheless, we can choose $ \delta = 1 $, because
$ D'_1  \cap \Phi ( \C^2 ) = \emptyset $ for any $ \delta \in (0, 1] $.  With the notation $ z = t(t- \epsilon ) $ we give an injective parametrization
$
D'_1 = \{ (4 \bar{z} + \epsilon^2 , z , 0) \ | \ |z  | = 1\}$.
We fill it with the membrane
\[
H = \{ (4 \bar{z} + \epsilon^2 , z , i \sqrt{1- |z|^2} ) \ | \ |z  | \leq 1 \} \mbox{ .}
\]
Computing the intersection points of $ H $ and $ \Phi( \C^2 ) $ leads to the equations
\[
4 \bar{z} + \epsilon^2 = a (a - \epsilon ), \ \
z = b (b - \epsilon ), \ \
i \sqrt{1- |z|^2} = ab,
\]
with $ | z | \leq 1 $ and $ \epsilon $ small.
The first two equations imply that $ |a| < 5 $ and $ |b| < 2 $. Multiplying the first two equations one gets
\[
z (4 \bar{z} + \epsilon^2 ) = a^2 b^2 - a^2 b \epsilon - a b^2 \epsilon + ab \epsilon^2.
\]
From the third equation follows $ a^2 b^2 = |z|^2 - 1 $, hence
\[
3 |z|^2 = -1 -  z \epsilon^2 - a^2 b \epsilon - a b^2 \epsilon + ab \epsilon^2,
\]
and the right hand side is negative if $ \epsilon $ is small enough. Hence $ H \cap \Phi ( \C^2 ) = \emptyset $, and $L(f)=0$.

\section{Final remark. The real version in arbitrary dimension.}


\subsection{}
 There is a real version of  part (b) of Theorem~\ref{th:main} which follows directly from the result of Whitney and Smale.

 Let $ \Phi: (\R^{n+1},0)  \to (\R^{2n+1}, 0) $ be a real  analytic germ singular only at $0$.
  With the same method as in the complex case we can associate  an immersion $ f: S^n \looparrowright S^{2n} $ with $ \Phi$ (see \ref{ss:link}). A generalization of Whitney's double point formula valid for plane curve immersions \cite{Whitney} shows that the Smale invariant of $ f $
  (more precisely, of  a generic immersion regular homotopic to $f$) equals the algebraic number of self-intersection points (${\rm mod}\ 2 $ if $ n $ is odd).

   A generic perturbation $ \Phi' $ of $ \Phi $ has only cross cap type singularities, i.e.
   locally right--left equivalent with germs of
   the form $ (s, t) \mapsto (s^2, st, t) $, where $ s \in \R $ and $ t \in \R^n $.
   These cross caps are isolated, and if $n$ is even, we can associate a sign for each of them. $ \Phi' $ restricted to the boundary is a generic immersion $ f': S^n \looparrowright S^{2n} $. $f'$ and $f$ are regular homotopic, and $f'$ has two kinds of double values:

   (a) double values related to a cross cap (that is, they are connected by a segment consisting
   of double values of $\Phi'$),

   (b) double values not related to a cross cap.

   When $n$ is even, the sign associated to a cross cap agrees with the sign associated with the self intersection point of $ f'$ related to the cross cap. Thus the algebraic number of such
   cross caps is equal to the algebraic number of double values of type (a) ($ {\rm mod}\ 2$ if $ n $ is odd). The double points of type (b) are pairwise joined up by segments of the double values of $ \Phi' $, thus the algebraic number of them is $ 0$.
   Moreover, it can happen that two cross caps are joined by a segment consisting of double values of $\Phi'$,
   but then they will have different algebraic sign, hence they will not contribute in the sum. 
   Hence, we proved:

   \begin{prop}
   The Smale invariant of $f$ agrees with the algebraic number of the cross cap points appearing in a generic perturbation of $ \Phi $  (${\rm mod}\ 2 $ if $ n $ is odd).
   \end{prop}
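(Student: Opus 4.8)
The plan is to chain together three ingredients: regular--homotopy invariance of the Smale invariant, a generalized Whitney double--point formula, and a compact $1$--manifold argument on the double--point locus of a generic perturbation which trades double values on the sphere for cross caps in the interior.

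First I would pass to a generic immersion. Since $\Omega$ is a regular--homotopy invariant and $f':=\Phi'|_{S^n}$ is regular homotopic to $f$ (the family $\Phi'_h$, $h\in[0,1]$, joining $\Phi$ to $\Phi'$ restricts on the boundary spheres to a regular homotopy, as in Proposition~\ref{l:gen}(b)), it suffices to compute $\Omega(f')$. For this I invoke the generalization of Whitney's plane--curve double--point formula following from Whitney \cite{Whitney} and Smale \cite{smale}: for a generic, self--transverse immersion $g:S^n\looparrowright S^{2n}$ one has $\Omega(g)=\sum_y\epsilon(y)$, the algebraic number of double values (read mod~$2$ when $n$ is odd, where no local signs are available). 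The problem is thereby reduced to identifying the algebraic number of double values of $f'$ with the algebraic number of cross caps of $\Phi'$.

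The geometric heart is as follows. The closure $\overline{\widetilde D}$ of the source double--point locus of $\Phi'$ inside $\mathfrak{B}^{n+1}$ is a compact $1$--manifold (for $n\ge 2$ there are no triple values of $\Phi'$; the case $n=1$ is absorbed by the mod~$2$ count), and it carries the involution $\iota$ interchanging two points with the same $\Phi'$--image; the fixed points of $\iota$ are exactly the source points of the cross caps, since in the model $(s,t)\mapsto(s^2,st,t)$ one has $\overline{\widetilde D}=\{(s,0)\}$ with $\iota(s,0)=(-s,0)$. Hence $\overline{\widetilde D}/\iota$ is a compact $1$--manifold with boundary whose boundary is the disjoint union of the cross caps and the double values of $f'$ (each of the latter being the $\iota$--orbit of a transverse pair of points of $\overline{\widetilde D}\cap S^n$). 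Each component is therefore a circle or an arc with two endpoints, of type (dv, dv), (cc, dv), or (cc, cc); every double value of $f'$ and every cross cap is the endpoint of precisely one arc.

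The remaining --- and, I expect, the only genuinely delicate --- step is the orientation bookkeeping when $n$ is even; for $n$ odd the arc pairing of the endpoints already forces the number of double values of $f'$ and the number of cross caps to have the same parity. I would transport a continuous normal framing of $\Phi'$ along each arc and compare at its two endpoints the relevant local invariant: the sign attached to a cross cap through its normal form, versus the self--intersection sign $\epsilon(y)$ at a double value of $f'$. The statement to be verified by a local normal--form computation is that, transported along an arc, these are \emph{opposite} at two cross cap endpoints, \emph{opposite} at two double--value endpoints, and \emph{equal} at a (cc, dv) pair. Granting this: the (dv, dv) arcs contribute $0$ to $\sum_y\epsilon(y)$, the (cc, cc) arcs contribute $0$ to $\sum_{\mathrm{cc}}\mathrm{sign}$, and the (cc, dv) arcs give a sign--preserving bijection between the surviving double values and the surviving cross caps, whence $\Omega(f)=\Omega(f')=\sum_y\epsilon(y)=\sum_{\mathrm{cc}}\mathrm{sign}$. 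The topological frame is a routine boundary count; the work is in fixing the cross cap and double--value sign conventions so that the three compatibility relations hold.
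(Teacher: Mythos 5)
Your proposal is correct and follows essentially the same route as the paper: reduce to a generic perturbation $\Phi'$ by regular homotopy invariance, apply the Whitney--Smale double--point formula to $f'=\Phi'|_{S^n}$, and then perform the endpoint count on the one--dimensional double--point locus, whose arcs pair cross caps with double values of $f'$ and whose sign compatibilities make the (dv,\,dv) and (cc,\,cc) arcs cancel. Your write--up is in fact slightly more explicit than the paper's (the involution on the source double locus, the quotient $1$--manifold, and the exclusion of triple values for $n\ge 2$), while leaving the same local sign verifications asserted rather than computed, just as the paper does.
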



\begin{thebibliography}{10}

\bibitem{bottper}
Raoul Bott.
\newblock The stable homotopy of the classical groups.
\newblock {\em Annals of Mathematics}, 70(2):313--337, 1959.

\bibitem{ekholm3}
Tobias Ekholm.
\newblock Differential 3-knots in 5-space with and without self-intersections.
\newblock {\em Topology}, 40(1):157--196, 2001.

\bibitem{ESz}
Tobias Ekholm and Andr{\'a}s Sz{\H{u}}cs.
\newblock Geometric formulas for {S}male invariants of codimension two
  immersions.
\newblock {\em Topology}, 42(1):171--196, 2003.

\bibitem{Esz2}
Tobias Ekholm and Andr{\'a}s Sz{\H{u}}cs.
\newblock The group of immersions of homotopy {$(4k-1)$}-spheres.
\newblock {\em Bull. London Math. Soc.}, 38(1):163--176, 2006.

\bibitem{fulton}
William Fulton.
\newblock {\em Algebraic curves}.
\newblock Universit{\'e} de Versailles, 2005.

\bibitem{hirsch}
Morris~W Hirsch.
\newblock Immersions of manifolds.
\newblock {\em Trans. AMS}, 93(2):242--276, 1959.

\bibitem{HM}
John~F Hughes and Paul~M Melvin.
\newblock The {S}male invariant of a knot.
\newblock {\em Comment. Math. Helvetici}, 60(1):615--627, 1985.

\bibitem{hughes}
John~Forbes Hughes.
\newblock Bordism and regular homotopy of low-dimensional immersions.
\newblock {\em Pacific J. Math}, 156(1):155--184, 1992.

\bibitem{husemoller}
Dale Husem{\"o}ller.
\newblock {\em Fibre bundles}, volume~20 of {\em Graduate Texts in
  Mathematics}.
\newblock Springer, 1994.

\bibitem{kinjo}
Shumi Kinjo.
\newblock Immersions of 3-sphere into 4-space associated with {D}ynkin diagrams
  of types {A} and {D}.
\newblock {\em arXiv:1309.6526}, 2013.

\bibitem{KirbyMelvin}
Rob Kirby and Paul Melvin.
\newblock Canonical framings for 3-manifolds.
\newblock {\em Turkish Journal of Mathematics}, 23(1):89--115, 1999.

\bibitem{looijenga}
Eduard Looijenga.
\newblock {\em Isolated singular points on complete intersections}, volume~77
  of {\em London Math. Society Lecture Note Series}.
\newblock Cambridge Univ. Press, 1984.

\bibitem{Mond1}
David Mond.
\newblock Singularities of mappings from surfaces to 3-spaces.
\newblock {\em Singularity Theory, D.T. L\^e, K. Saito, B. Teissier editors,
  World scientific}, pages 509--526.

\bibitem{Mond2}
David Mond.
\newblock Some remarks on the geometry and classification of germs of maps from
  surfaces to 3-space.
\newblock {\em Topology}, 26(3):361--383, 1987.

\bibitem{Mondwh}
David Mond.
\newblock The number of vanishing cycles for a quasihomogeneous mapping from
  $\bf{C}^2$ to $\bf{C}^3$.
\newblock {\em Quart. J. Math. Oxford}, 42:335--345, 1991.

\bibitem{mumford}
David Mumford.
\newblock The topology of normal singularities of an algebraic surface and a
  criterion for simplicity.
\newblock {\em Publ. Math. de l'IHES}, 9(1):5--22, 1961.

\bibitem{saeki}
Osamu Saeki, Andr{\'a}s Sz{\H{u}}cs, and Masamichi Takase.
\newblock Regular homotopy classes of immersions of 3-manifolds into 5-space.
\newblock {\em manuscripta math.}, 108(1):13--32, 2002.

\bibitem{smale}
Stephen Smale.
\newblock The classification of immersions of spheres in {E}uclidean spaces.
\newblock {\em Annals of Mathematics}, 69(2):327--344, 1959.

\bibitem{invariant}
Tonny~A. Springer.
\newblock {\em Invariant Theory}, volume 585 of {\em Lecture Notes in Math.}
\newblock Springer, 1977.

\bibitem{steenrod}
Norman~Earl Steenrod.
\newblock {\em The topology of fibre bundles}, volume~14 of {\em PMS}.
\newblock Princeton Univ. Press, 1951.

\bibitem{szucstwo}
Andr{\'a}s Sz{\H{u}}cs.
\newblock Two theorems of {R}okhlin.
\newblock {\em Journal of Mathematical Sciences}, 113(6):888--892, 2003.

\bibitem{Whitney}
Hassler Whitney.
\newblock On regular closed curves in the plane.
\newblock {\em Compositio Mathematica}, 4:276--284, 1937.

\end{thebibliography}
\end{document}